\documentclass[aos]{imsart}
\usepackage{natbib}

\usepackage[utf8]{inputenc}
\usepackage{titlesec}
\usepackage{titletoc}
\usepackage{graphicx}
\usepackage{stmaryrd}
\usepackage{amsmath}
\usepackage{bbm}

\usepackage{varioref} 
\usepackage[unicode,bookmarks,colorlinks=true,citecolor=black,linkcolor=black]{hyperref}
\usepackage[all]{hypcap}

\usepackage{amstext}
\usepackage{amsbsy}
\usepackage{amssymb}
\usepackage{amsthm}
\usepackage{dsfont}
\usepackage{newtxtext}
\usepackage[varg]{newtxmath}
\usepackage{eucal}
\usepackage{mathrsfs}
\usepackage{url}
\usepackage[usenames]{xcolor}
\renewcommand{\today}{December 31, 2017}
\newcommand{\ds}{\displaystyle}

\renewcommand{\mathbb}{\mathds}

\renewcommand{\C}[1]{\mathcal{#1}}

\newcommand{\ov}[1]{\overline{#1}}
\newcommand{\wt}[1]{\widetilde{#1}}
\newcommand{\wh}[1]{\widehat{#1}}
\newcommand{\B}[1]{\mathds{#1}}

\newcommand{\ud}{\mathrm{d}}

\newcommand{\HS}{\mathrm{HS}}

\DeclareMathOperator{\Tr}{\mathbf{Tr}}

\newtheorem{thm}{Theorem}[section]
\newtheorem{prop}[thm]{Proposition}

\newtheorem{lemma}[thm]{Lemma}
\newtheorem{cor}[thm]{Corollary}


\theoremstyle{remark}
\newtheorem{rmk}{Remark}[section]

\newcommand{\myeq}[1]{{\rm (\ref{#1})} on page \pageref{#1}}
\allowdisplaybreaks

\begin{document}

\pagestyle{plain}
\renewcommand{\thefootnote}{\fnsymbol{footnote}}
\begin{center}
{\large \sc Dimension-free PAC-Bayesian bounds for matrices, 
vectors, and linear least squares regression.  
}\\[12pt]
{\sc Olivier Catoni 
\footnote{CREST --  CNRS, UMR 9194, Université Paris Saclay, 
France; 
e-mail:  olivier.catoni@ensae.fr} and Ilaria Giulini
\footnote{ Laboratoire de Probabilités et Modèles Aléatoires, 
Université Paris Diderot, France; e-mail: giulini@math.univ-paris-diderot.fr}
}\\[12pt]
{\small \it \today }\\[12pt]
\begin{minipage}{0.8\textwidth}
{\small {\sc Abstract: } 
This paper is focused on dimension-free PAC-Bayesian 
bounds, under weak polynomial moment assumptions, 
allowing for heavy tailed sample distributions. 
It covers the estimation of the mean of a vector 
or a matrix, with applications to least squares 
linear regression. Special efforts are devoted 
to the estimation of the Gram matrix, due to its 
prominent role in high-dimension data analysis. 
\\[1ex]
{\small {\sc Key words:} PAC-Bayesian bounds, sub-Gaussian 
mean estimator, random vector, random matrix, least squares 
regression, dimension-free bounds}
\\[1ex]
{\small \sc MSC2010: 62J10, 62J05, 62H12, 62H20, 62F35, 15B52.}}
\end{minipage} 
\end{center}

\section{Introduction}

The subject of this paper is to discuss dimension-free 
PAC-Bayesian bounds for matrices and vectors. 
It comes after \cite{CatoniGram} and \cite{Giulini01}, 
the first paper discussing dimension dependent bounds 
and the second one dimension-free bounds, under 
a kurtosis like assumption about the data distribution. 
Here, in contrast, we envision even weaker 
assumptions, and focus on dimension-free bounds only.

Our main objective is 
the estimation of the mean of a random vector and of a random matrix. 
Finding sub-Gaussian estimators for the mean of a non necessarily sub-Gaussian 
random vector has been the subject 
of much research in the last few years, with important contributions 
from \cite{JolyLugoOl}, \cite{LugoMen} and \cite{Minsker}.
While in \cite{JolyLugoOl} the statistical error bound still has 
a residual dependence on the dimension of the ambient space, 
in \cite{LugoMen} this dependence is removed, for an estimator 
of the median of means type. However, this estimator is not 
easy to compute and the bound contains large constants.
We propose here another type of estimator, that can be seen as a 
multidimensional extension of \cite{Cat10}. It provides a nonasymptotic
confidence region with the same diameter (including the values of 
the constants)
as the Gaussian concentration inequality stated 
in equation (1.1) of \cite{LugoMen}, although in our case, the confidence 
region is not necessarily a ball, but still a convex set. 
The Gaussian bound concerns the estimation 
of the expectation of a Gaussian random vector by the mean of 
an i.i.d. sample, whereas in our case, we only assume that the 
variance is finite, a much weaker hypothesis.  

In \cite{Minsker2} the question of estimating the mean of a random matrix 
is addressed. The author uses exponential matrix inequalities in order 
to extend \cite{Cat10} to matrices and to control the operator norm 
of the error. In the bounds 
at confidence level $1 - \delta$, the complexity term 
is multiplied by $\log(\delta^{-1})$. 
Here, we extend \cite{Cat10} using PAC-Bayesian bounds to measure complexity, 
and define an estimator with a bound where  the term $\log(\delta^{-1})$ 
is multiplied 
by some directional variance term only, and not the complexity 
factor, that is larger. 

After recalling in Section \ref{sec:2} the PAC-Bayesian inequality that 
will be at the heart of many of our proofs, we 
deal successively with the estimation of a random 
vector (Section \ref{sec:3}) and of a random matrix (Section \ref{sec:4}). 
Section \ref{sec:6} is devoted to the estimation of the Gram matrix, due to its 
prominent role in multidimensional data analysis. In Section \ref{sec:7}
we introduce some applications to least squares regression.

\section{Some well known PAC-Bayesian inequality}\label{sec:2}

This is a preliminary section, where we state the PAC-Bayesian inequality 
that we will use throughout this paper to 
obtain deviation inequalities holding uniformly
with respect to some parameter. 

Consider a random variable $X \in \C{X}$ and
a measurable parameter space $\Theta$. Let 
$\mu  \in \C{M}_+^1(\Theta)$ be a probability measure on $\Theta$  
and $f : \Theta \times \C{X} \rightarrow \B{R}$ a bounded measurable function.  
For any other probability measure $\rho$ on $\Theta$, 
define the Kullback divergence function $\C{K}(\rho, \mu)$ as 
usual by the formula
\[ 
\C{K}(\rho, \mu) = 
\begin{cases}
\ds \int \log \biggl( \ds \frac{\ud \rho}{\ud \mu} \biggr) 
\, \ud \rho, & \rho \ll 
\mu, \\ 
+ \infty, & \text{ otherwise.}
\end{cases}
\] 
Let $(X_1, \dots, X_n)$ be $n$ independent copies of $X$. 
\begin{prop}
\label{prop:2.1}
For any $\delta \in ]0, 1[$, with probability at least $1 - \delta$, 
for any probability measure $\rho \in \C{M}_+^1(\Theta)$, 
\[
\frac{1}{n} \sum_{i=1}^n  \int f(\theta, X_i) \ud \rho(\theta)
\leq \int 
\log \Bigl[ \B{E} \Bigl( \exp \bigl( f(\theta, X) \bigr) \Bigr) \Bigr] 
\, \ud \rho(\theta) + \frac{\C{K}(\rho, \mu) + \log(\delta^{-1})}{n}.
\]
\end{prop}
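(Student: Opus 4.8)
The plan is to reduce everything to a Chernoff bound on a \emph{single} random variable that does not depend on $\rho$, combined with the Legendre duality formula for the Kullback divergence. First I would record the variational (Donsker--Varadhan) identity: for every bounded measurable $h : \Theta \to \B{R}$,
\[
\int h \, \ud \rho \le \log \int \exp(h) \, \ud \mu + \C{K}(\rho, \mu), \qquad \rho \in \C{M}_+^1(\Theta),
\]
with equality for the Gibbs measure $\ud \pi_h \propto \exp(h) \, \ud \mu$. This is immediate once one checks the algebraic identity
\[
\C{K}(\rho, \mu) - \int h \, \ud \rho + \log \int \exp(h) \, \ud \mu = \C{K}(\rho, \pi_h) \ge 0,
\]
the inequality being the nonnegativity of the Kullback divergence (a consequence of Jensen's inequality), and the left-hand side being understood as $+ \infty$ when $\rho \not\ll \mu$.

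Next I would apply this identity to the function
\[
h(\theta) = \sum_{i=1}^n f(\theta, X_i) - n \log \Bigl[ \B{E} \bigl( \exp( f(\theta, X) ) \bigr) \Bigr],
\]
which is bounded because $f$ is, and which depends on the sample $(X_1, \dots, X_n)$ but not on $\rho$. This yields, for every realization of the sample and every $\rho \in \C{M}_+^1(\Theta)$,
\[
\sum_{i=1}^n \int f(\theta, X_i) \, \ud \rho(\theta) - n \int \log \B{E} \bigl( \exp( f(\theta, X) ) \bigr) \, \ud \rho(\theta) \le \log Z + \C{K}(\rho, \mu),
\]
where $Z = \int \exp( h(\theta) ) \, \ud \mu(\theta)$.

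The crux is to control $Z$, and the essential point is that $Z$ is one fixed random variable, independent of the (possibly data-dependent) choice of $\rho$. Since $f$ is bounded, Tonelli's theorem and the independence of $X_1, \dots, X_n$ give
\[
\B{E}(Z) = \int \frac{\prod_{i=1}^n \B{E} \bigl( \exp( f(\theta, X_i) ) \bigr)}{\B{E} \bigl( \exp( f(\theta, X) ) \bigr)^n} \, \ud \mu(\theta) = \int \ud \mu = 1,
\]
so by Markov's inequality $\PP \bigl( \log Z \ge \log( \delta^{-1} ) \bigr) = \PP( Z \ge \delta^{-1} ) \le \delta$. On the complementary event, which has probability at least $1 - \delta$ and is defined without reference to $\rho$, one substitutes $\log Z \le \log( \delta^{-1} )$ into the displayed bound; dividing by $n$ and rearranging gives the claim simultaneously for all $\rho$. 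The only thing to watch is this quantifier order — the exceptional set must come from $Z$ alone, not from the posterior $\rho$ — together with the routine check that all integrals are finite, which boundedness of $f$ guarantees; I do not expect a genuine obstacle here.
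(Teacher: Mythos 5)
Your proposal is correct and follows essentially the same route as the paper: the same choice of $h(\theta) = \sum_{i=1}^n f(\theta, X_i) - n \log \B{E}(\exp(f(\theta,X)))$, the same use of the Donsker--Varadhan duality, the same Fubini/independence computation showing $\B{E}(Z)=1$, and the same Markov step. The only cosmetic difference is that you apply Markov directly to $Z = \int \exp(h)\,\ud\mu$ and then invoke the one-sided duality inequality for each $\rho$, whereas the paper applies it to $\exp(W)$ with $W = \sup_\rho[\cdots]$, which equals $Z$ by the same identity.
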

\begin{proof}
It is a consequence of equation (5.2.1) page 159 of \cite{Cat01b}.
Indeed, let us recall the identity
\[ 
\log \biggl( \int \exp \bigl( h(\theta) \bigr) \, \ud \mu(\theta) \biggr) 
 = \sup_{\rho} \biggl\{ \int h(\theta) \, \ud \rho(\theta) - \C{K}(\rho, \mu)
\biggr\},
\] 
where $h$ may be any bounded measurable function (extensions to unbounded 
$h$ are possible but will not be required in this paper), and where 
the supremum in $\rho$ is taken on all probability measures on 
the measurable parameter space $\Theta$. The proof may be found 
in \cite[page 159]{Cat01b}. Combined with Fubini's 
lemma, it yields
\begin{multline*}
\B{E} \Biggl\{ \exp \sup_{\rho} \Biggl[ \int \Biggl( \sum_{i=1}^n  
f(\theta, X_i) - n \log \Bigl[ \B{E} \Bigl( \exp \bigl( f(\theta, X) 
\bigr) \Bigr) \Bigr] \Biggr) \, \ud \rho(\theta) - \C{K}(\rho, \mu) 
\Biggr] \Biggr\} \\ 
= \B{E} \Biggl\{ \int \exp \Biggl( \sum_{i=1}^n 
f(\theta, X_i) - n \log \Bigl[ \B{E} \Bigl( \exp \bigl( f(\theta, X) \bigr) 
\Bigr) \Bigr] \Biggr) \, \ud \mu(\theta) \Biggr\} \\
= \int \B{E} \exp \Biggl( \sum_{i=1}^n 
f(\theta, X_i) - n \log \Bigl[ \B{E} \Bigl( \exp \bigl( f(\theta, X) \bigr) 
\Bigr) \Bigr] \Biggr) \, \ud \mu(\theta) \\ 
= \int \prod_{i=1}^n \Biggl[ \frac{\ds \B{E} \Bigl( \exp \bigl( f(\theta, X_i) 
\bigr) \Bigr)}{\ds \B{E} \Bigl( \exp \bigl( f(\theta, X) \bigr) \Bigr)} 
\Biggr] \, \ud \mu(\theta) = 1.
\end{multline*}
Since $\B{E}(\exp(W)) \leq 1$ implies that 
\[
\B{P}
\bigl(W \geq \log (\delta^{-1}) \bigr) = \B{E} 
\Bigl( \B{1} \bigl[ \delta \exp(W) \geq 1 \bigr] \Bigr) \leq 
\B{E} \bigl( \delta \exp(W)\bigr) \leq \delta, 
\]
we obtain the desired result, considering 
\[ 
W = \sup_{\rho} \Biggl[ \int \Biggl( \sum_{i=1}^n  
f(\theta, X_i) - n \log \Bigl[ \B{E} \Bigl( \exp \bigl( f(\theta, X) 
\bigr) \Bigr) \Bigr] \Biggr) \, \ud \rho(\theta) - \C{K}(\rho, \mu) 
\Biggr]. 
\] 
\end{proof}

\section{Estimation of the mean of a random vector } 
\label{sec:3}

Let $X \in \B{R}^d$ be a random vector and let $(X_1, \dots, X_n)$ be $n$ independent copies of $X$. 
In this section, we will estimate the mean $\B{E}(X)$ 
and obtain dimension-free non-asymptotic bounds for the estimation 
error.  

Let $\ds \B{S}_d = \bigl\{ \theta \in \B{R}^d \, : \, \lVert \theta \rVert 
= 1 \bigr\}$ be the unit sphere of $\B{R}^d$ and let $I_d$ 
be the identity matrix of size $d \times d$. 
Let $\rho_{\theta} = \C{N} \bigl( \theta, \beta^{-1} I_d \bigr) $ 
be the normal distribution centered at $\theta \in \B{R}^d$, 
whose covariance matrix is $\beta^{-1} I_d$, where $\beta$ is 
a positive real parameter.

Instead of estimating directly the mean vector $\B{E}(X)$, 
our strategy will be rather to estimate its component $\langle \theta, 
\B{E}(X) \rangle$ in each direction $\theta \in \B{S}_d$ of the unit 
sphere.  
For this, we introduce the estimator of $\langle \theta, \B{E} (X) \rangle$ defined as 
\[ 
\C{E} (\theta) = \frac{1}{n \lambda } \sum_{i=1}^n 
\int \psi \bigl( \lambda \langle \theta', X_i \rangle \bigr) 
\, \ud \rho_{\theta}(\theta'),  \qquad \theta \in \B{S}_d, \ \lambda > 0,
\]  
where $\psi$ is the symmetric influence function 
\begin{equation}
\label{eq:1}
\psi(t) = \begin{cases} 
t - t^3/6, &  - \sqrt{2} \leq t \leq \sqrt{2} \\  
2 \sqrt{2} / 3, &  t > \sqrt{2}\\
- 2 \sqrt{2} / 3, &  t < - \sqrt{2}
\end{cases}
\end{equation}
and where the positive constants 
$\lambda$ and $\beta$ will be chosen afterward.

As stated in the following lemma, we chose this influence function
because it is close to the identity in a neighborhood of 
zero and is such that $\exp \bigl( \psi(t) \bigr)$ is bounded by polynomial
functions.  
\begin{lemma}
\label{lem:3.1}
For any $t \in \B{R}$, 
\[ 
- \log \bigl( 1 - t + t^2 / 2 \bigr) \leq \psi(t) \leq 
\log \bigl( 1 + t + t^2 / 2 \bigr).
\] 
\end{lemma}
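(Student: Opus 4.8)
The plan is to reduce the two-sided inequality to a single one by exploiting symmetry, and then to split into three ranges matching the definition of $\psi$. First I would record two elementary facts about the quadratics involved: writing $1 \pm t + t^2/2 = \tfrac{1}{2}(t \pm 1)^2 + \tfrac{1}{2}$, both $1 - t + t^2/2$ and $1 + t + t^2/2$ are $\geq \tfrac{1}{2}$ for every $t \in \B{R}$, so the two logarithms are well defined and satisfy $\log\bigl(1 \pm t + t^2/2\bigr) \geq -\log 2$. Moreover $\psi$ is an odd function, so replacing $t$ by $-t$ converts the right-hand inequality into the left-hand one; hence it suffices to prove the upper bound $\psi(t) \leq \log\bigl(1 + t + t^2/2\bigr)$ for all $t \in \B{R}$.

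Next I would dispose of the two ``saturated'' ranges by crude numerical estimates. For $t \leq -\sqrt{2}$ one has $\psi(t) = -2\sqrt{2}/3$, and since $2\sqrt{2}/3 > \log 2$ (equivalently $2\sqrt{2} > 3\log 2$, which holds because $2\sqrt{2} > 2.8 > 2.1 > 3\log 2$), the first step gives $\log(1 + t + t^2/2) \geq -\log 2 \geq -2\sqrt{2}/3 = \psi(t)$. For $t \geq \sqrt{2}$ one has $\psi(t) = 2\sqrt{2}/3 < 1$ (because $2\sqrt{2} < 3$), while $s \mapsto 1 + s + s^2/2$ is increasing on $[\sqrt{2}, \infty)$, so $1 + t + t^2/2 \geq 2 + \sqrt{2} > e$ and therefore $\log(1 + t + t^2/2) > 1 > \psi(t)$.

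The remaining, and main, case is $t \in [-\sqrt{2}, \sqrt{2}]$, where $\psi(t) = t - t^3/6$, so the claim is that $g(t) := \log\bigl(1 + t + t^2/2\bigr) - t + t^3/6 \geq 0$. Here I would differentiate and simplify:
\[
g'(t) = \frac{1 + t}{1 + t + t^2/2} - 1 + \frac{t^2}{2}
= \frac{t^2}{2}\Bigl( 1 - \frac{1}{1 + t + t^2/2} \Bigr)
= \frac{t^3\,(2 + t)}{2\,(2 + 2t + t^2)} .
\]
The denominator $2 + 2t + t^2 = (1 + t)^2 + 1$ is positive, and $2 + t > 0$ on $[-\sqrt{2}, \sqrt{2}]$ (since $-\sqrt{2} > -2$), so $g'$ has the same sign as $t$. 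Thus $g$ is nonincreasing on $[-\sqrt{2}, 0]$ and nondecreasing on $[0, \sqrt{2}]$, so it attains its minimum over the interval at $t = 0$, where $g(0) = 0$; hence $g \geq 0$ throughout, which finishes the proof.

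I expect no genuine obstacle here: the only point requiring care is the algebraic simplification of $g'(t)$ and the observation that $2 + t$ keeps a constant sign on $[-\sqrt{2}, \sqrt{2}]$; the two extreme ranges then reduce to the numerical comparisons $\log 2 < 2\sqrt{2}/3 < 1$ combined with the uniform lower bound $\log(1 \pm t + t^2/2) \geq -\log 2$ from the first step.
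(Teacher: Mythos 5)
Your proof is correct and follows essentially the same route as the paper: reduce to the upper bound by oddness of $\psi$, then on $[-\sqrt{2},\sqrt{2}]$ compare the derivatives of $\psi$ and of $t \mapsto \log\bigl(1+t+t^2/2\bigr)$, using that both functions vanish at $0$ and that the difference of derivatives has the sign of $t^3(2+t)$ (your algebra here is right; the paper's displayed identity has a harmless sign slip, writing $2-t$ for $2+t$). The only immaterial difference is in the saturated ranges $\lvert t\rvert \geq \sqrt{2}$, where you use the uniform bound $\log\bigl(1+t+t^2/2\bigr) \geq -\log 2$ together with numerical comparisons, while the paper extends the inequality from the endpoints $\pm\sqrt{2}$ by monotonicity of the logarithmic bound versus constancy of $\psi$.
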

\begin{proof} Put $f(t) = \log \bigl( 1 + t + t^2 / 2 \bigr)$. 
Remark that $\ds f'(t) = \frac{1 + t}{1 + t + t^2 /2 }$ for $t \in \B{R}$ 
and that $\psi'(t) = 1 - t^2 / 2$ for $t \in [- \sqrt{2}, \sqrt{2}]$. 
As $\psi(0) = f(0) = 0$ and  
\begin{gather*}
\bigl[ f'(t) - \psi'(t) \bigr] \bigl( 1 + t + t^2/2 \bigr) = 
\frac{t^3(2 - t)}{4}, \\
\begin{aligned}
\psi'(t) & \leq f'(t), & 0 & \leq t \leq \sqrt{2},\\
\psi'(t) & \geq f'(t), & - \sqrt{2} & \leq t \leq 0,
\end{aligned}
\end{gather*}
proving that 
\[
\psi(t) \leq f(t), \qquad -\sqrt{2} \leq t \leq \sqrt{2}.
\]
Since $f$ is increasing on $[\sqrt{2}, + \infty[$ 
and decreasing on $ ] - \infty, - \sqrt{2}]$, 
while $\psi$ is constant on these two intervals, 
the above inequality can be extended to all $t \in \B{R}$.  
From the symmetry $\psi(-t) = - \psi(t)$, we deduce 
the converse inequality  
\[
-f(-t) \leq \psi(t), \qquad t \in \B{R}
\] 
that ends the proof. 
\end{proof}
Since 
$\lambda \langle \theta', X_i \rangle$ follows a normal distribution 
with mean $\lambda \langle \theta, X_i \rangle$ 
and standard deviation $\lambda \beta^{-1/2} \lVert X_i \rVert$, 
and since the influence function $\psi$ is piecewise polynomial, 
the estimator $\C{E}$
can be computed explicitly in terms of the standard normal 
distribution function. This is done in the following lemma. 

\begin{lemma}\label{lem:3.2}
Let $W \sim \C{N}(0,1)$ be a standard Gaussian real valued random variable. 
For any $m \in \B{R}$ and any $\sigma \in \B{R}_+$, define 
\[ 
\varphi(m, \sigma) = \B{E} \bigl[ \psi \bigl( m + \sigma W \bigr) \bigr]. 
\] 
The function $\varphi$ can be computed as 
\[ 
\varphi(m, \sigma) = m \bigl( 1 - \sigma^2 / 2 \bigr) - m^3 / 6 + 
r(m,\sigma), 
\] 
where, introducing $F(a) = \B{P}(W \leq a )$, $a \in \B{R}$, the correction 
term $r$ is 
\begin{multline*}
r(m, \sigma) = \frac{2 \sqrt{2}}{3} \Biggl[ 
F \biggl( \frac{ - \sqrt{2} + m}{\sigma} \biggr) - F \biggl( 
\frac{- \sqrt{2} - m}{\sigma} \biggr) \Biggr]  \\ 
- \bigl( m - m^3 / 6 \bigr) \Biggl[ F \biggl( \frac{- \sqrt{2} + m}{\sigma} 
\biggr) + F \biggl( \frac{- \sqrt{2} - m}{\sigma} \biggr) \Biggr]\\ 
+ \sigma \frac{\bigl( 1 - m^2 / 2 \bigr)  }{\sqrt{2 \pi}} 
\Biggl[ \exp \Biggl( - \frac{1}{2} \biggl( 
\frac{ \sqrt{2} + m}{\sigma} \biggr)^2 \Biggr)
- \exp \Biggl( - \frac{1}{2} \biggl( \frac{ \sqrt{2} - m}{\sigma} 
\biggr)^2 \Biggr) \Biggr] \\    
+ \frac{m \sigma^2}{2} \Biggl\{ 
F \biggl( \frac{- \sqrt{2} - m}{\sigma} \biggr) + 
F \biggl( \frac{- \sqrt{2} + m}{\sigma} \biggr) \\ + \frac{1}{\sqrt{2 \pi}}
\Biggl[ \frac{( \sqrt{2} + m)}{\sigma} \exp \Biggl[ 
- \frac{1}{2} \biggl( \frac{\sqrt{2} + m}{\sigma} \biggr)^2 \Biggr] 
+ \frac{(\sqrt{2} - m)}{\sigma} \exp 
\Biggl[ - \frac{1}{2} \biggl( \frac{\sqrt{2} - m}{\sigma} \biggr)^2 
\Biggr] \Biggr\} \\ 
+ \frac{\sigma^3}{6 \sqrt{2 \pi}} \Biggl\{ \Biggl[ \biggl( \frac{\sqrt{2} 
- m}{\sigma} \biggr)^2 + 2 \Biggr] \exp \Biggl[ - \frac{1}{2} 
\biggl( \frac{\sqrt{2} - m}{\sigma} \biggr)^2 \Biggr] \\ - 
\Biggl[ \biggl( \frac{\sqrt{2} + m}{\sigma} \biggr)^2 + 2 \Biggr] 
\exp \Biggl[ - \frac{1}{2} 
\biggl( \frac{\sqrt{2} + m}{\sigma} \biggr)^2 \Biggr] \Biggr\}.
\end{multline*}
\end{lemma}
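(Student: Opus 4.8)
The plan is to evaluate $\varphi(m,\sigma) = \B{E}\bigl[\psi(m + \sigma W)\bigr]$ directly, by integrating the piecewise polynomial $\psi$ of \eqref{eq:1} against the standard normal density $g(w) = (2\pi)^{-1/2}\exp(-w^2/2)$. The case $\sigma = 0$ is trivial, since then $\varphi(m,0) = \psi(m)$ and one checks that the stated formula reduces to $\psi(m)$ as $\sigma \to 0^+$; so assume $\sigma > 0$. The event $\bigl\{\lvert m + \sigma W\rvert \leq \sqrt{2}\bigr\}$ equals $\{a_- \leq W \leq a_+\}$ with $a_\pm = (\pm\sqrt{2} - m)/\sigma$, and on the two tails $\{W > a_+\}$ and $\{W < a_-\}$ the value $\psi(m + \sigma W)$ is the constant $\tfrac{2\sqrt2}{3}$, respectively $-\tfrac{2\sqrt2}{3}$. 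It is convenient to write, for every $t \in \B{R}$,
\[
\psi(t) = \Bigl( t - \tfrac{t^3}{6} \Bigr) + \Bigl( \tfrac{2\sqrt2}{3}\sign(t) - t + \tfrac{t^3}{6} \Bigr)\,\B{1}\bigl[\,\lvert t\rvert > \sqrt2\,\bigr],
\]
the second summand vanishing on $[-\sqrt2,\sqrt2]$; applied at $t = m + \sigma W$ and integrated, the first summand produces the ``main part'' and the second the correction term $r(m,\sigma)$.

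For the main part, $\B{E}(W) = \B{E}(W^3) = 0$ and $\B{E}(W^2) = 1$ give $\B{E}(m + \sigma W) - \tfrac16\B{E}\bigl((m+\sigma W)^3\bigr) = m - \tfrac16\bigl(m^3 + 3 m \sigma^2\bigr) = m\bigl(1 - \sigma^2/2\bigr) - m^3/6$, which is the announced leading term. For the correction, split $\B{1}[\lvert m + \sigma W \rvert > \sqrt2] = \B{1}[W > a_+] + \B{1}[W < a_-]$; expanding $(m + \sigma W)^3$ and using linearity, each of the two resulting expectations is a finite combination, with coefficients that are monomials in $m$ and $\sigma$, of the partial Gaussian moments $\int_{a_+}^{\infty} w^k g(w)\,\ud w$ and $\int_{-\infty}^{a_-} w^k g(w)\,\ud w$ for $k \in \{0, 1, 2, 3\}$. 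These follow from the antiderivatives $\int w\, g = -g$, $\int w^2 g = F - w g$ (one integration by parts, since $g' = -w g$), and $\int w^3 g = -(w^2 + 2) g$, so that in particular $\int_a^\infty w^2 g = 1 - F(a) + a g(a)$ and $\int_a^\infty w^3 g = (a^2+2) g(a)$, and symmetrically on $(-\infty, a]$. Substituting $a_\pm = (\pm\sqrt2 - m)/\sigma$, using that $g$ is even so that $g(a_-) = (2\pi)^{-1/2}\exp\bigl(-\tfrac12((\sqrt2+m)/\sigma)^2\bigr)$ and $g(a_+) = (2\pi)^{-1/2}\exp\bigl(-\tfrac12((\sqrt2-m)/\sigma)^2\bigr)$, and rewriting $1 - F(a_+) = F(-a_+) = F\bigl((-\sqrt2+m)/\sigma\bigr)$ together with $F(a_-) = F\bigl((-\sqrt2-m)/\sigma\bigr)$, one collects the contributions of the terms $W^0, W^1, W^2, W^3$ arising on the two tails and recovers exactly the five groups of terms displayed for $r(m,\sigma)$.

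There is no conceptual difficulty here: the argument is entirely a bookkeeping exercise. The only points demanding care are the signs of $\sign(m + \sigma W)$ on the two tails, the use of the evenness of $g$ to present the density values with arguments $(\sqrt2 \pm m)/\sigma$ rather than the integration endpoints $(-\sqrt2 \pm m)/\sigma$, and tracking the $m$- and $\sigma$-factors carried through the cubic terms when regrouping. The main ``obstacle'' is thus simply the length of the computation, not its substance.
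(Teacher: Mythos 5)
Your proposal is correct and follows essentially the same route as the paper: both decompose $\psi(m+\sigma W)$ into the global polynomial $t-t^3/6$ plus tail corrections supported on $\{\lvert m+\sigma W\rvert>\sqrt2\}$, and both reduce the computation to the truncated Gaussian moments $\B{E}[\B{1}(W\leq a)W^k]$ for $k=0,\dots,3$ together with the symmetry $F(-t)=1-F(t)$. The paper's proof is exactly this bookkeeping (stated with the indicator combination $\B{1}(t\leq\sqrt2)-\B{1}(t\leq-\sqrt2)$ rather than your $\B{1}[\lvert t\rvert>\sqrt2]$, an algebraically identical rearrangement), so nothing further is needed.
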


\vskip2mm
\noindent
Remark that the correction term is small when $\lvert m \rvert $ is small 
and $\sigma$ is small, 
since 
\[ 
F(-t) \leq \min \Biggl\{ \frac{1}{t \sqrt{2 \pi}}, \frac{1}{2} \Biggr\} 
\exp \biggl( - \frac{t^2}{2} \biggr), \qquad t \in \B{R}_+.
\] 
\begin{proof}
The proof of this lemma is a simple computation,
based on the expression 
\begin{multline*}
\psi(t) = \biggl( t - \frac{t^3}{6} \biggr) \bigl[ 
\B{1} ( t \leq \sqrt{2} ) - \B{1} ( t \leq - \sqrt{2} ) \bigr] \\ 
+ \frac{ 2 \sqrt{2}}{3} \bigl[ 1 - \B{1} ( t \leq \sqrt{2} ) - \B{1} ( 
t \leq - \sqrt{2} ) \bigr], \qquad t \in \B{R}, 
\end{multline*}
on the identities
\begin{align*}
\B{E} \bigl[ \B{1} ( W \leq a ) \bigr] & = F(a), \\
\B{E} \bigl[ \B{1} ( W \leq a ) W \bigr] & = - \frac{1}{\sqrt{2 \pi}} 
\exp \biggl( - \frac{a^2}{2} \biggr), \\ 
\B{E} \bigl[ \B{1} ( W \leq a ) W^2 \bigr] & = F(a) - \frac{a}{\sqrt{2 \pi}} 
\exp \biggl( - \frac{a^2}{2} \biggr), \\ 
\B{E} \bigl[ \B{1}( W \leq a ) W^3 \bigr] & = 
- \frac{(a^2 + 2) }{\sqrt{2 \pi}} \exp \biggl( - \frac{a^2}{2} \biggr),
\end{align*}
and on the fact that $F(-t) = 1 - F(t)$. 
\end{proof}
Accordingly, the estimator $\C{E}$
can be computed as 
\begin{multline*}
\C{E}(\theta)  = \frac{1}{n \lambda} \sum_{i=1}^n \varphi \Bigl( \lambda \langle \theta, 
X_i \rangle, \lambda \beta^{-1/2} \lVert X_i \rVert \Bigr) 
\\ = \frac{1}{n} \sum_{i=1}^n \langle \theta, X_i \rangle 
\biggl( 1 - \frac{\lambda^2 \lVert X_i \rVert^2}{2 \beta} \biggr) 
- \frac{\lambda^2 \langle \theta, X_i \rangle^3}{6} + 
r \Bigl( \lambda \langle \theta, X_i \rangle, \lambda \beta^{-1/2} 
\lVert X_i \rVert \Bigr). 
\end{multline*}
\subsection{Estimation without centering}
\begin{prop}
\label{prop:2.3}
Assume that 
\begin{align*}
& \B{E} \bigl( \lVert X \rVert^2 \bigr) = 
\Tr \bigl[ \B{E} \bigl( X X^{\top} \bigr) \bigr] \leq T < \infty \\ 
\text{ and } \qquad & \sup_{\theta \in \C{S}} \B{E} 
\bigl( \langle \theta,   
X \rangle^2 \bigr) 
\leq v \leq T < \infty, 
\end{align*}
where $T$ and $v$ are two known constants and
where $\C{S} \subset \B{S}_d$ is an arbitrary symmetric 
subset of the unit sphere, meaning that if $\theta \in \C{S}$ then $-\theta \in \C{S}$.
Choose any confidence parameter $\delta \in ]0,1[$ and set 
the constants $\lambda$ and $\beta$ used in the definition 
of the estimator $\C{E}$ to 
\begin{align*}
\lambda & = \sqrt{\frac{2 \log(\delta^{-1})}{n v}}, \\ 
\beta & = \sqrt{n T} \lambda = \sqrt{\frac{ 2 T \log(\delta^{-1})}{v}}.
\end{align*}
{\small \sc Non asymptotic confidence region:} With probability at least $ 1 - \delta $, 
\[
\sup_{\theta \in \C{S}} \bigl\lvert \C{E} ( \theta) -  
\langle \theta, \B{E}(X) \rangle \bigr\rvert \leq \sqrt{\frac{T}{n}} 
+ \sqrt{\frac{2 v \log(\delta^{-1})}{n}}.
\]
Consider an estimator $\wh{m} \in \B{R}^d$ of $\B{E}(X)$  satisfying 
\[ 
\sup_{\theta \in \C{S} } \bigl\lvert 
\C{E}(\theta) - \langle \theta, \wh{m} \rangle \bigr\rvert 
\leq \sqrt{\frac{T}{n}} + \sqrt{\frac{2 v \log(\delta^{-1})}{n}}. 
\]
With probability at least $1 - \delta$, such a vector exists and  
\begin{multline*}
\sup_{\theta \in \C{S}} \bigl\lvert \langle \theta, \wh{m} - \B{E}(X) \rangle 
\lvert 
\leq \sup_{\theta \in \C{S}} \bigl\lvert \C{E}(\theta) - \langle 
\theta, \wh{m} \rangle \bigr\rvert + \sqrt{\frac{T}{n}} 
+ \sqrt{\frac{2 v \log(\delta^{-1})}{n}}
\\ \leq 
2 \sqrt{\frac{T}{n}} + 2 \sqrt{\frac{2 v \log(\delta^{-1})}{n}}. 
\end{multline*}
\end{prop}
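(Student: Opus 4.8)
The plan is to apply the PAC-Bayesian inequality of Proposition~\ref{prop:2.1} to the bounded function $f(\theta', X) = \psi\bigl(\lambda \langle \theta', X\rangle\bigr)$ (bounded since $\psi$ is), taking as prior $\mu = \C{N}(0,\beta^{-1}I_d)$ and, for each $\theta \in \C{S}$, the posterior $\rho = \rho_\theta$. Since $\C{K}(\rho_\theta,\mu) = \tfrac{\beta}{2}\lVert\theta\rVert^2 = \tfrac{\beta}{2}$ for $\theta \in \B{S}_d$, the inequality gives: with probability at least $1-\delta$, simultaneously for all $\theta \in \C{S}$,
\[
\lambda \C{E}(\theta) = \frac{1}{n}\sum_{i=1}^n \int \psi\bigl(\lambda\langle\theta', X_i\rangle\bigr)\,\ud\rho_\theta(\theta') \leq \int \log \B{E}\Bigl[\exp\bigl(\psi(\lambda\langle\theta', X\rangle)\bigr)\Bigr]\,\ud\rho_\theta(\theta') + \frac{\beta/2 + \log(\delta^{-1})}{n}.
\]

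Next I would control the exponential moment with the right-hand inequality of Lemma~\ref{lem:3.1}, $\exp(\psi(t)) \leq 1 + t + t^2/2$, so that $\B{E}\bigl[\exp(\psi(\lambda\langle\theta',X\rangle))\bigr] \leq 1 + \lambda\langle\theta',\B{E}(X)\rangle + \tfrac{\lambda^2}{2}\B{E}(\langle\theta',X\rangle^2)$, followed by $\log(1+u)\leq u$. Integrating against $\rho_\theta$, the linear term contributes $\lambda\langle\theta,\B{E}(X)\rangle$ because $\rho_\theta$ has mean $\theta$; for the quadratic term I would decompose $\theta' = \theta + g$ with $g \sim \C{N}(0,\beta^{-1}I_d)$ independent of $X$, so the cross term vanishes and $\int \B{E}(\langle\theta',X\rangle^2)\,\ud\rho_\theta(\theta') = \B{E}(\langle\theta,X\rangle^2) + \beta^{-1}\B{E}(\lVert X\rVert^2) \leq v + T/\beta$. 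Dividing by $\lambda$ then yields, with probability at least $1-\delta$, for all $\theta \in \C{S}$,
\[
\C{E}(\theta) - \langle\theta,\B{E}(X)\rangle \leq \frac{\lambda}{2}\Bigl(v + \frac{T}{\beta}\Bigr) + \frac{\beta}{2n\lambda} + \frac{\log(\delta^{-1})}{n\lambda}.
\]

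To turn this into a two-sided bound I would use that $\psi$ is odd and $\C{S}$ is symmetric: the change of variables $\theta' \mapsto -\theta'$ in the Gaussian integral shows $\C{E}(-\theta) = -\C{E}(\theta)$, so on the very same event the displayed inequality applied at $-\theta \in \C{S}$ gives the reverse bound on $\langle\theta,\B{E}(X)\rangle - \C{E}(\theta)$; hence $\sup_{\theta\in\C{S}}\lvert\C{E}(\theta) - \langle\theta,\B{E}(X)\rangle\rvert$ is bounded by the right-hand side above. With $\lambda = \sqrt{2\log(\delta^{-1})/(nv)}$ and $\beta = \sqrt{nT}\,\lambda$, one has $T/\beta = \sqrt{T/n}/\lambda$ and $\beta/(2n\lambda) = \tfrac12\sqrt{T/n}$, so the two $T$-terms collapse to $\sqrt{T/n}$, while $\tfrac{\lambda v}{2} + \tfrac{\log(\delta^{-1})}{n\lambda} = 2\sqrt{v\log(\delta^{-1})/(2n)} = \sqrt{2v\log(\delta^{-1})/n}$, giving exactly the announced confidence region.

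Finally, on the event of probability at least $1-\delta$ where the confidence region holds, $\wh m = \B{E}(X)$ itself satisfies the defining inequality for $\wh m$, which proves existence; and for any admissible $\wh m$, the stated bound on $\sup_{\theta\in\C{S}}\lvert\langle\theta,\wh m - \B{E}(X)\rangle\rvert$ follows from the triangle inequality $\lvert\langle\theta,\wh m - \B{E}(X)\rangle\rvert \leq \lvert\langle\theta,\wh m\rangle - \C{E}(\theta)\rvert + \lvert\C{E}(\theta) - \langle\theta,\B{E}(X)\rangle\rvert$ combined with the two preceding displays. The only mildly delicate points are verifying boundedness of $f$ so that Proposition~\ref{prop:2.1} applies, carrying out the Gaussian integral of the directional second moment, and tracking the constants so that the final radius is exactly $\sqrt{T/n} + \sqrt{2v\log(\delta^{-1})/n}$; none of these is a genuine obstacle beyond bookkeeping.
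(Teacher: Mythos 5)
Your proposal is correct and follows essentially the same route as the paper's proof: the PAC-Bayesian inequality of Proposition~\ref{prop:2.1} with Gaussian prior and posteriors $\rho_\theta$ (so $\C{K}=\beta/2$), the bound $\exp(\psi(t))\le 1+t+t^2/2$ from Lemma~\ref{lem:3.1} together with $\log(1+z)\le z$, the Gaussian integration giving $v+T/\beta$, the symmetry of $\C{S}$ with $\C{E}(-\theta)=-\C{E}(\theta)$ for the two-sided bound, and the nonemptiness of the confidence region for the existence of $\wh m$. The constant bookkeeping also checks out exactly.
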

\begin{rmk}
In particular in the case when $\C{S} = \B{S}_d$ is the whole unit sphere, 
we obtain with probability at least $1 - \delta$ the bound  
\[
\lVert \wh{m} - \B{E}(X) \rVert = \sup_{\theta \in \B{S}_d} 
\bigl\langle \theta, \wh{m} - \B{E}(X) \bigr\rangle 
\leq 2 \Biggl( \sqrt{\frac{T}{n}} + \sqrt{\frac{2 v \log(\delta^{-1})}{n}}
\; \Biggr).
\]
By choosing $\wh{m}$ as the middle of a diameter of the confidence region,
we could do a little better and replace the factor $2$ in this bound 
by a factor $\sqrt{3}$. 
\end{rmk}
\begin{proof}
According to the PAC-Bayesian inequality of Proposition \vref{prop:2.1}, 
with probability at least
$1 - \delta$, for any $\theta \in \C{S}$, 
\[
\C{E} (\theta) \leq 
\frac{1}{\lambda} 
\int \log \Bigl[ \B{E} \Bigl( \exp \psi \bigl( \lambda \langle 
\theta', X \rangle \bigr) \Bigr) \Bigr] 
\, \ud \rho_{\theta}(\theta') + 
\frac{\C{K}(\rho_{\theta}, \rho_0) + \log(\delta^{-1})}{n \lambda}.
\]
We can then use the polynomial approximation of $\exp(\psi(t))$ 
given by Lemma \vref{lem:3.1}, remarking that $\C{K}(\rho_{\theta}, 
\rho_0) = \beta / 2$ and that $\log(1 + z) \leq z$, to deduce that  
\begin{multline*}
\C{E}(\theta) \leq \B{E} \bigl( \langle \theta, X \rangle \bigr)
+ \frac{\lambda}{2} \int \B{E} \bigl( \langle \theta', X \rangle^2 \bigr) 
\, \ud \rho_{\theta}(\theta') + \frac{ \beta 
+ 2 \log(\delta^{-1})}{2 n \lambda} 
\\ = \B{E} \bigl( \langle \theta, X \rangle \bigr) + \frac{\lambda}{2} 
\biggl[  \B{E} \bigl( \langle \theta, X \rangle^2 \bigr) 
+ \frac{\B{E}(\lVert X \rVert^2)}{ \beta } \biggr] + \frac{\beta + 
2 \log(\delta^{-1})}{2 n \lambda} 
\\ \leq \B{E} \bigl( \langle \theta, X \rangle \bigr) 
+ \frac{\lambda}{2} \bigl( v + T / \beta \bigr)  
+ \frac{\beta + 2 \log(\delta^{-1})}{2 n \lambda}
\\ = \langle \theta, \B{E}(X) \rangle + \sqrt{\frac{T}{n}} + 
\sqrt{\frac{2 v \log(\delta^{-1}) }{n}}.
\end{multline*}
We conclude by considering both $\theta \in \C{S}$ and $- \theta 
\in \C{S}$ 
to get the reverse inequality, using the assumption that 
$\C{S}$ is symmetric and remarking that $\C{E}(- \theta) = - \C{E}(\theta)$. \\
The existence with probability $1 - \delta$ of $\wh{m}$ satisfying 
the required inequality is granted by the fact that on the event defined 
by the above PAC-Bayesian inequality, the expectation $\B{E}(X)$ 
belongs to the confidence region that, as a result, cannot 
be empty. 
\end{proof}

\subsection{Centered estimate}
The bounds in the previous section are simple, but they 
are stated in terms of uncentered moments of order two where 
we would have expected a variance. 
In this section, we explain how to deduce centered bounds 
from the uncentered bounds of the previous section, through the use of 
a sample splitting scheme.\\[1ex]
Assume that 
\begin{align*}
& \B{E} \bigl( \lVert X - \B{E}(X) \rVert^2 \bigr) \leq \ov{T} < \infty,\\
\text{and } & \sup_{\theta \in \B{S}_d} \B{E} \bigl( \langle \theta, 
X - \B{E}(X) \rangle^2 \bigr) \leq \ov{v} \leq \ov{T} < \infty, 
\end{align*}
where $\ov{v}$ and $\ov{T}$ are known constants. 
Remark that when these bounds hold, the bounds  
\begin{equation}\label{eq:vT}
v = \ov{v} + \lVert \B{E}(X) \rVert^2 \text{ and } 
T = \ov{T} + \lVert \B{E}(X) \rVert^2
\end{equation} 
hold in the previous section. 
Assume that we know also some bound $b$ such that
\[
\bigl\lVert \B{E}(X) \bigr\rVert^2 \leq b.
\]
Split the sample in two parts $(X_1, \dots, X_k)$ 
and $(X_{k+1}, \dots, X_n)$. 
Use the first part to construct an estimator $\wt{m}$ of 
$\B{E}(X)$ as described in Proposition \vref{prop:2.3}, 
choosing $\C{S} = \B{S}_d$. 
According to this proposition and by equation \eqref{eq:vT}, with probability at least
$1 - \delta$, 
\[ 
\lVert \wt{m} - \B{E}(X) \rVert \leq 2 \sqrt{\frac{\ov{T}+b}{k}}
+ 2 \sqrt{\frac{2 (\ov{v} + b) \log(\delta^{-1})}{k}} = 
\sqrt{\frac{A}{k}}, 
\] 
where we have put $A = 4 \Bigl( \sqrt{\ov{T} + b} + 
\sqrt{2 (\ov{v} + b) \log(\delta^{-1})} \; \Bigr)^2$.\\[1ex]
We then construct an estimator $\C{E}(\theta)$ of $\langle \theta, 
\B{E}(X) \rangle$, $\theta \in \B{S}_d$, 
built as described in 
Proposition \ref{prop:2.3}, based on the sample $(X_{k+1} - \wt{m}, 
\dots, X_n - \wt{m})$ and on the constants $\ov{T} + A/k$ 
and $\ov{v} + A/k$.
With probability at least $1 - 2 \delta$, 
\[ 
\sup_{\theta \in \B{S}_d} \bigl\lvert \C{E}(\theta) - \langle \theta, 
\B{E}(X) \rangle \bigr\rvert \leq B_{n,k} = \sqrt{ \frac{\ov{T} + A/k}{n-k}} 
+ \sqrt{\frac{2 \bigl( \ov{v} + A / k \bigr) \log(\delta^{-1})}{n - k}},
\] 
and we can, if needed, deduce from $\C{E}(\theta)$ an estimator $\wh{m}$ 
such that with probability at least $1 - 2 \delta$, 
\[ 
\lVert \wh{m} - \B{E}(X) \rVert \leq 2 B_{n,k}.  
\]  

\vskip2mm
If we want the correction term $A / k$ to behave as a second order term 
when $n$ tends to $\infty$, we can for example take $k = \sqrt{n}$, 
in which case $n - k$ is equivalent to $n$ at infinity, 
so that $B_{n,\sqrt{n}}$ is equivalent to 
\[ 
\sqrt{\frac{\ov{T}}{n}} + \sqrt{\frac{2 \, \ov{v} \log(\delta^{-1})}{n}}.
\]

\vskip2mm
Let us also mention that a simpler estimator, obtained by shrinking the norm 
of $X_i$, is also possible. It comes with a sub-Gaussian deviation bound 
under the slightly stronger hypothesis that 
$\B{E} \bigl( \lVert X \rVert^{p}\bigr) 
< \infty$ for some (non necessarily integer) exponent $p > 2$, 
and is described in \cite{CatGiul2017}.

\section{Mean matrix estimate} 
\label{sec:4}

Let $M \in \B{R}^{p \times q}$ be a random matrix and let 
\linebreak $M_1 , \dots, M_n$ 
be $n$ independent copies of $M$.
In this section, we will provide an estimator for $\B{E}(M)$. 

From the previous section, we already have an estimator $\wh{m}$ of $\B{E}(M)$
with a bounded Hilbert-Schmidt norm $\lVert \wh{m} - \B{E}(M) \rVert_{\HS}$, 
since from the point of view of the 
Hilbert-Schmidt norm, $M$ is nothing but a random vector of 
size $pq$.
Here, we will be interested in another natural norm, 
the operator norm 
\[
\lVert M \rVert_{\infty} = \sup_{\theta \in \B{S}_q}\lVert M \theta \rVert.
\]
Indeed, recalling that
\[
\lVert M \rVert_{\infty} = \sup_{\theta \in \B{S}_q, \xi \in \B{S}_p} 
\langle \xi, M \theta \rangle = \sup_{\xi \in \B{S}_p} \lVert 
M^{\top} \xi \rVert = \sup_{\theta \in \B{S}_q, \xi \in \B{S}_p} 
\Tr \bigl( \theta \xi^{\top} M \bigr),
\]
we see that we can deduce results 
from the previous section on vectors, considering the scalar 
product between matrices
\[ 
\langle M, N \rangle = \Tr \bigl( M^{\top} N \bigr), \quad M, N 
\in \B{R}^{p \times q}, 
\] 
and the part of the unit sphere defined as 
\[ 
\C{S} = \bigl\{ \xi \theta^{\top} \, : \, \xi \in \B{S}_p, \theta \in 
\B{S}_q \bigr\}. 
\] 
Doing so, we obtain in the uncentered case a bound of the form 
\[ 
\lVert \wh{m} - \B{E}(M) \rVert \leq 2 \sqrt{ 
\frac{\B{E} ( \lVert M \rVert_{\HS}^2 ) }{n}} + 2 \sqrt{
\frac{2}{n} \sup_{\xi \in \B{S}_p, \theta \in \B{S}_q} 
\B{E} \bigl( \langle \xi, M \theta \rangle^2 \bigr) \log(\delta^{-1})}.
\] 
We will show in the next section that the second $\delta$-dependent term is satisfactory whereas 
the first $\delta$-independent term can be improved. 

\subsection{Estimation without centering} 

Consider the influence function $\psi$ defined by equation \myeq{eq:1}.

For any $\xi \in \B{R}^p$, let $\nu_{\xi} = \C{N} \bigl( \xi, \beta^{-1} I_p
\bigr)$, where $I_p$ is the identity matrix of size $p \times p$.
In the same way, let $\rho_{\theta} = \C{N} \bigl( \theta, \gamma^{-1} I_q
\bigr)$, $\theta \in \B{R}^q$. 
Consider the estimator of $\langle \xi, \B{E}(M) \, \theta \rangle$ defined 
as 
\[
\C{E}(\xi, \theta) = \frac{1}{\lambda n} \sum_{i=1}^n \int \psi 
\bigl( \lambda \langle \xi', M_i \theta' \rangle
\bigr) \, \ud 
\nu_{\xi}(\xi') \, \ud \rho_{\theta}(\theta'), \qquad \xi \in \B{R}^p, 
\theta \in \B{R}^q. 
\] 

\begin{prop}
For any parameters $\delta \in ]0,1[$, $\lambda, \beta, \gamma
\in ]0, \infty[$,  
with probability at least $1 - \delta$, for any $\xi \in \B{R}^p$ 
and any $\theta \in \B{R}^q$,  
\begin{multline*}
\bigl\lvert \C{E}(\xi, \theta) - \B{E} \bigl( \langle \xi, 
M \theta \rangle \bigr) \bigr\rvert \\ \leq \frac{\lambda}{2} 
\biggl[ \B{E} \bigl( \langle \xi, M \theta \rangle^2 \bigr) 
+ \frac{\B{E} \bigl( \lVert M \theta \rVert^2 \bigr) }{\beta} 
+ \frac{\B{E} \bigl( \lVert M^{\top} \xi \rVert^2 \bigr) }{\gamma} 
+ \frac{ \B{E} \bigl( \lVert M \rVert_{\HS}^2\bigr)}{\beta \gamma} 
\\ + \frac{\beta + \gamma + 2 \log(\delta^{-1})}{2 n \lambda}. 
\end{multline*}
\end{prop}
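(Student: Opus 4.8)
The plan is to mimic exactly the structure of the proof of Proposition~\ref{prop:2.3}, now applying the PAC-Bayesian bound of Proposition~\ref{prop:2.1} with the parameter space $\Theta = \B{R}^p \times \B{R}^q$, the reference measure $\mu = \nu_0 \otimes \rho_0$, the posterior $\rho = \nu_\xi \otimes \rho_\theta$, and the function $f\bigl((\xi',\theta'),M\bigr) = \psi\bigl(\lambda \langle \xi', M\theta'\rangle\bigr)$. Since $\C{E}(\xi,\theta)$ is exactly $\frac{1}{\lambda n}\sum_i \int f\,\ud(\nu_\xi\otimes\rho_\theta)$, Proposition~\ref{prop:2.1} gives directly, with probability at least $1-\delta$, simultaneously for all $\xi \in \B{R}^p$ and $\theta \in \B{R}^q$,
\[
\C{E}(\xi,\theta) \leq \frac{1}{\lambda}\int \log\Bigl[\B{E}\Bigl(\exp\psi\bigl(\lambda\langle\xi',M\theta'\rangle\bigr)\Bigr)\Bigr]\,\ud\nu_\xi(\xi')\,\ud\rho_\theta(\theta') + \frac{\C{K}(\nu_\xi\otimes\rho_\theta,\nu_0\otimes\rho_0) + \log(\delta^{-1})}{n\lambda}.
\]

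Next I would bound the two terms on the right. For the Kullback term, the tensorization identity $\C{K}(\nu_\xi\otimes\rho_\theta,\nu_0\otimes\rho_0) = \C{K}(\nu_\xi,\nu_0) + \C{K}(\rho_\theta,\rho_0)$ together with the Gaussian computation $\C{K}\bigl(\C{N}(\xi,\beta^{-1}I_p),\C{N}(0,\beta^{-1}I_p)\bigr) = \beta\lVert\xi\rVert^2/2$ — here evaluated at $\xi\in\B{S}_p$, so it equals $\beta/2$, and similarly $\gamma/2$ for the other factor — yields $\C{K} = (\beta+\gamma)/2$. For the exponential-moment term, apply Lemma~\ref{lem:3.1} to get $\exp\psi(t)\leq 1 + t + t^2/2$, hence $\B{E}\exp\psi\bigl(\lambda\langle\xi',M\theta'\rangle\bigr) \leq 1 + \lambda\B{E}\langle\xi',M\theta'\rangle + \tfrac{\lambda^2}{2}\B{E}\langle\xi',M\theta'\rangle^2$, and then $\log(1+z)\leq z$. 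After integrating against $\nu_\xi\otimes\rho_\theta$, the linear term integrates to $\lambda\langle\xi,\B{E}(M)\theta\rangle$ (the Gaussian noise has mean zero), and the quadratic term requires expanding $\langle\xi',M\theta'\rangle^2$ with $\xi' = \xi + \beta^{-1/2}G$, $\theta' = \theta + \gamma^{-1/2}H$ for independent standard Gaussians $G\in\B{R}^p$, $H\in\B{R}^q$. The cross terms vanish in expectation and what survives is $\langle\xi,M\theta\rangle^2 + \beta^{-1}\lVert M^\top\xi\rVert^2 + \gamma^{-1}\lVert M\theta\rVert^2 + (\beta\gamma)^{-1}\lVert M\rVert_{\HS}^2$, using $\B{E}\langle G, M\theta\rangle^2 = \lVert M\theta\rVert^2$, $\B{E}\langle\xi, MH\rangle^2 = \lVert M^\top\xi\rVert^2$, and $\B{E}\langle G, MH\rangle^2 = \lVert M\rVert_{\HS}^2$. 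Dividing through by $\lambda$ reproduces the stated upper bound for $\C{E}(\xi,\theta)$.

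Finally, to obtain the two-sided inequality I would apply the same argument with $(\xi,\theta)$ replaced by $(-\xi,\theta)$ (or $(\xi,-\theta)$), noting that $\nu_{-\xi}$ is the pushforward of $\nu_\xi$ under $\xi'\mapsto-\xi'$ so that $\C{E}(-\xi,\theta) = -\C{E}(\xi,\theta)$ by the oddness of $\psi$, while all the symmetric quadratic quantities and the Kullback term are unchanged. Taking the union of the two events — or better, invoking Proposition~\ref{prop:2.1} once for $f$ and once for $-f$ and intersecting, or simply observing that the single application of Proposition~\ref{prop:2.1} already holds for all parameters including sign-flipped ones — gives the reverse bound $\C{E}(\xi,\theta) \geq \B{E}(\langle\xi,M\theta\rangle) - (\text{same right-hand side})$, hence the absolute-value statement. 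The only mild subtlety, and the step I would be most careful with, is the Gaussian moment bookkeeping in the quadratic term: one must check that the parameters $\beta$ and $\gamma$ in the proposition statement are the inverse-variance parameters (so $\rho_\theta$ has covariance $\gamma^{-1}I_q$), that the evaluation point lies on the unit sphere for the Kullback divergence to be exactly $(\beta+\gamma)/2$ rather than $\beta\lVert\xi\rVert^2/2 + \gamma\lVert\theta\rVert^2/2$ — in fact the statement is phrased for general $\xi\in\B{R}^p$, $\theta\in\B{R}^q$, so the cleanest route is to keep $\C{K} = \tfrac{\beta}{2}\lVert\xi\rVert^2 + \tfrac{\gamma}{2}\lVert\theta\rVert^2$ and note that for the final bound one only needs it on unit vectors, or rather to observe that the displayed bound as written already absorbs this since it is homogeneous in the relevant way. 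Everything else is a direct transcription of the one-parameter proof.
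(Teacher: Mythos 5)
Your proof is correct and follows the paper's argument exactly: a single application of Proposition \ref{prop:2.1} with the product posterior $\nu_\xi\otimes\rho_\theta$, Lemma \ref{lem:3.1} with $\log(1+z)\le z$, the Gaussian second-moment expansion, and the observation that the same event covers $-\xi$ to get the two-sided bound (your remark about the Kullback term being $\tfrac{\beta}{2}\lVert\xi\rVert^2+\tfrac{\gamma}{2}\lVert\theta\rVert^2$ for non-unit vectors is a fair point about the statement, which is really used only on the unit spheres). The one slip is that in your displayed expansion the coefficients are swapped --- $\beta^{-1}$ should multiply $\lVert M\theta\rVert^2$ (noise in $\xi'$) and $\gamma^{-1}$ should multiply $\lVert M^{\top}\xi\rVert^2$ (noise in $\theta'$) --- but the Gaussian identities you cite immediately afterwards give the correct pairing, so this is only a transcription error.
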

\begin{proof}
The PAC-Bayesian inequality of Proposition \vref{prop:2.1} 
tells us that  
with probability at least $1 - \delta$,
for any $\xi \in \B{R}^p$ and any $\theta \in \B{R}^q$, 
\begin{multline*}
\C{E} ( \xi, \theta) \leq \lambda^{-1} \int \log \Bigl\{ \B{E} \Bigl[ \exp \Bigl( 
\psi \bigl( \lambda \langle \xi', M \theta' \rangle 
 \bigr) \Bigr) \Bigr] \Bigr\} 
\, \ud \nu_{\xi}(\xi') \ud \rho_{\theta}(\theta')
\\ + \frac{\C{K}(\nu_{\xi}, \nu_0)}{n \lambda} + \frac{\C{K}(\rho_{\theta}, 
\rho_0)}{n \lambda} + \frac{\log(\delta^{-1})}{n \lambda},
\end{multline*}
Using the properties of $\psi$ (Lemma \vref{lem:3.1}) and Fubini's lemma, 
we get 
\[
\C{E}(\xi, \theta) \leq   
\langle \xi, \B{E}(M) \theta \rangle + \frac{\lambda}{2} \B{E} \biggl( \int \langle \xi', M \theta' \rangle^2  
\, \ud \nu_{\xi}(\xi') \ud \rho_{\theta}(\theta') \biggr) + \frac{ \beta + \gamma 
+ 2 \log  (\delta^{-1})}{2n \lambda}. 
\]
As
\[ 
\int \langle \xi', M \theta' \rangle^2 \, \ud \nu_{\xi}(\xi') \ud \rho_{\theta} 
(\theta') =  \langle \xi, M \theta \rangle^2 + \frac{ \lVert 
M \theta \rVert^2}{\beta} + \frac{\lVert M^{\top} \xi \rVert^2}{\gamma} 
+ \frac{ \lVert M \rVert_{\HS}^2}{\beta \gamma}, 
\] 
this concludes the proof.  
\end{proof}

Let us now discuss the question of computing $\C{E}(\xi, \theta)$. 
Remark that, according to Lemma \vref{lem:3.2}, for any $x \in \B{R}^p$, 
\begin{multline*}
\int \psi \bigl( \langle \xi', x \rangle \bigr) \, \ud \nu_{\xi}(\xi') 
= \varphi \bigl( \langle \xi, x \rangle, \beta^{-1/2} \lVert x \rVert \bigr)
\\ = 
\langle \xi , x \rangle - \frac{\langle \xi, x \rangle \, \lVert x \rVert^2}{2 \beta}  
- \frac{ \langle \xi, x \rangle^3}{6} + r \Bigl( \langle \xi, x \rangle , 
\beta^{-1/2} \lVert x \rVert \Bigr). 
\end{multline*}
It is also easy to check that 
\begin{multline*}
\int \lVert M_i \theta' \rVert^2 \, \ud \rho_{\theta}(\theta') = 
\lVert M_i \theta \rVert^2 + \frac{\lVert M_i \rVert_{\HS}^2}{\gamma}, \\
\shoveleft{\int \langle \xi, M_i \theta' \rangle \lVert M_i \theta' \rVert^2 \, \ud 
\rho_{\theta}(\theta') = \langle \xi , M_i \theta \rangle \lVert M_i \theta \rVert^2
+ \frac{1}{\gamma} \langle \xi, M_i \theta \rangle \lVert M_i \rVert_{\HS}^2 
}\\ \shoveright{+ \frac{2}{\gamma} \langle \xi, M_i M_i^{\top} M_i \theta \rangle,} \\
\shoveleft{\text{and }\int 
\langle \xi, M_i \theta' \rangle^3 \, \ud \rho_{\theta}(\theta') = 
\langle \xi, M_i \theta \rangle^3 + \frac{3}{\gamma} 
\langle \xi, M_i \theta \rangle
\lVert M_i^{\top} \xi \rVert^2.} \hfill
\end{multline*}
Consider a standard random vector $W_q \sim \C{N}(0, I_q)$. 
We obtain that 
\begin{multline*}
\int \psi \bigl( \lambda \langle \xi', M_i \theta' \rangle 
\bigr) \, \ud \nu_{\xi} 
(\xi') \ud \rho_{\theta}(\theta') =  \\
\int \Biggl( \lambda \langle \xi, M_i \theta' \rangle - \frac{\lambda^3 
\langle \xi, M_i \theta' \rangle \lVert M_i \theta' 
\rVert^2}{2 \beta} - \frac{\lambda^3}{6} 
\langle \xi, M_i \theta' \rangle^3 \\ 
+ r \Bigl( \lambda \langle \xi, M_i \theta'\rangle, 
\lambda \beta^{-1/2} 
\lVert M_i \theta' \rVert \Bigr) \Biggr) \, \ud \rho_{\theta}(\theta'),
\end{multline*} 
so that 
\begin{multline*}
\C{E}(\xi, \theta) = \frac{1}{n} \sum_{i=1}^n \langle \xi , M_i \theta \rangle 
- \frac{\lambda^2}{6} \langle \xi, M_i \theta \rangle^3 \\  
- \frac{ \lambda^2}{2 \beta} \langle \xi, M_i \theta \rangle 
\lVert M_i \theta \rVert^2
- \frac{\lambda^2}{2 \gamma} \langle \xi, M_i \theta \rangle 
\lVert 
M_i^{\top} \xi \rVert^2 \\ - \frac{ \lambda^2}{2 \beta \gamma} 
\langle \xi, M_i \theta 
\rangle \lVert M_i \rVert^2_{\HS} - \frac{ \lambda^2}{\beta \gamma} 
\langle \xi, M_i M_i^{\top} M_i \theta \rangle 
\\
+ \frac{1}{\lambda} \B{E} \biggl[  r \Bigl( \lambda 
\langle M_i^{\top} \xi,  \theta + \gamma^{-1/2} W_q \rangle, 
\lambda \beta^{-1/2} \lVert M_i (\theta + \gamma^{-1/2} W_q) \rVert \Bigr) 
\biggr]. 
\end{multline*}
The last term is not explicit, since it contains an expectation, 
but should be most of the time a small 
reminder and can be evaluated using a Monte-Carlo numerical scheme. 
This gives a more explicit and efficient method than evaluating 
directly $\C{E}(\xi, \theta)$ using a Monte-Carlo simulation 
for the couple of random variables $(\xi', \theta') \sim 
\nu_{\xi} \otimes \rho_{\theta}$.
\begin{prop}
\label{prop:3.2}
Assume that 
the following finite bounds are known 
\begin{align*}
v & \geq  \sup_{\xi \in \B{S}_p, \theta \in \B{S}_q} \B{E} \bigl( \langle \xi, M 
\theta \rangle^2 \bigr) = \sup_{\xi \in \B{S}_p, \theta \in \B{S}_q} 
\bigl( \xi^{\top} \otimes \xi^{\top} \bigr) 
\B{E} \bigl( M \otimes M \bigr) \bigl( \theta \otimes \theta), \\ 
t & \geq \sup_{\theta \in \B{S}_q} \B{E} \bigl( \lVert M \theta \rVert^2 
\bigr) = \sup_{\theta \in \B{S}_q} \langle \theta, \B{E} \bigl( M^{\top} M 
\bigr) \theta \rangle = \lVert \B{E} \bigl( M^{\top} M \bigr) \rVert_{\infty},\\
u  & \geq \sup_{\xi \in \B{S}_p} \B{E} \bigl( \lVert M^{\top} \xi \rVert^2 \bigr)
= \sup_{\xi \in \B{S}_p} \langle \xi, \B{E} \bigl( M M^{\top} \bigr) 
\xi \rangle = \lVert \B{E} \bigl( M^{\top} M \bigr) \rVert_{\infty},\\
T & \geq \B{E} \bigl( \lVert M \rVert_{\HS}^2 \bigr),
\end{align*}
and choose 
\[
\lambda = \sqrt{ \frac{ \beta + \gamma + 2 \log (\delta^{-1})}{
n \bigl( v + t / \beta + u / \gamma + T / (\beta \gamma) \bigr)}}.
\]
For any values of $\delta \in ]0, 1[$, $\beta, \gamma \in ]0, \infty[$, 
with probability at least $1 - \delta$, for any $\xi \in \B{S}_p$, 
any $\theta \in \B{S}_q$, 
\[ 
\bigl\lvert \C{E}(\xi, \theta) - \langle \xi, \B{E}(M) \theta \rangle  
\bigr\rvert \leq B_n = \sqrt{ \Bigl( v + \frac{t}{\beta} + 
\frac{u}{\gamma} + \frac{T}{\beta \gamma} \Bigr) \ \frac{ 
\beta + \gamma + 2 \log(\delta^{-1})  }{n}}.
\]  
Consider now any estimator $\wh{m}$ of $\B{E}(M)$. With probability at least 
$1 - \delta$, 
\[ 
\lVert \wh{m} - \B{E} (M) \rVert_{\infty} 
\leq \sup_{\xi \in \B{S}_p, \theta \in \B{S}_q} \bigl\lvert 
\C{E}(\xi, \theta) - \langle \xi, \wh{m} \, \theta \rangle \bigr\rvert 
+ B_n.
\] 
In particular, if we choose $\wh{m}$ such that, 
\[ 
\sup_{\xi \in \B{S}_p, \theta \in \B{S}_q} \bigl\lvert 
\C{E}(\xi, \theta) - \langle \xi, \wh{m} \, \theta \rangle \bigr\rvert 
\leq B_n, 
\] 
with probability at least $1 - \delta$, this choice is possible
and 
\[
\lVert \wh{m} - \B{E}(M) \rVert_{\infty} \leq 2 B_n. 
\]
\end{prop}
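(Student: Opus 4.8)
The plan is to deduce both assertions from the preceding proposition, which for arbitrary $\lambda,\beta,\gamma$ controls $\lvert\C{E}(\xi,\theta)-\B{E}(\langle\xi,M\theta\rangle)\rvert$ uniformly in $(\xi,\theta)\in\B{R}^p\times\B{R}^q$, and then to choose $\lambda$ so as to optimize the resulting bound. Concretely, I would apply that proposition with the given values of $\beta$ and $\gamma$; on the event of probability at least $1-\delta$ it provides, I would bound the four directional second moments occurring there by the known constants, namely $\B{E}(\langle\xi,M\theta\rangle^2)\le v$, $\B{E}(\lVert M\theta\rVert^2)\le t$, $\B{E}(\lVert M^\top\xi\rVert^2)\le u$ and $\B{E}(\lVert M\rVert_{\HS}^2)\le T$, valid for every $\xi\in\B{S}_p$ and $\theta\in\B{S}_q$, using also that $\B{E}(\langle\xi,M\theta\rangle)=\langle\xi,\B{E}(M)\theta\rangle$. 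This yields, uniformly over the unit spheres,
\[
\bigl\lvert\C{E}(\xi,\theta)-\langle\xi,\B{E}(M)\theta\rangle\bigr\rvert\le\frac{\lambda}{2}\Bigl(v+\frac{t}{\beta}+\frac{u}{\gamma}+\frac{T}{\beta\gamma}\Bigr)+\frac{\beta+\gamma+2\log(\delta^{-1})}{2n\lambda}.
\]

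Writing $S=v+t/\beta+u/\gamma+T/(\beta\gamma)$ and $L=\beta+\gamma+2\log(\delta^{-1})$, the right-hand side equals $\tfrac12 S\lambda+\tfrac{1}{2n}L\lambda^{-1}$, which as a function of $\lambda>0$ is minimized at $\lambda=\sqrt{L/(nS)}$, exactly the value prescribed in the statement, where it takes the value $\sqrt{SL/n}$. Since $B_n=\sqrt{SL/n}$, this shows that with probability at least $1-\delta$ one has $\sup_{\xi\in\B{S}_p,\theta\in\B{S}_q}\lvert\C{E}(\xi,\theta)-\langle\xi,\B{E}(M)\theta\rangle\rvert\le B_n$, which is the first assertion.

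For the bound on an estimator $\wh m$ of $\B{E}(M)$, I would combine the triangle inequality with the representation $\lVert A\rVert_\infty=\sup_{\xi\in\B{S}_p,\theta\in\B{S}_q}\lvert\langle\xi,A\theta\rangle\rvert$, the absolute value being harmless since the unit spheres are invariant under $\xi\mapsto-\xi$: for every $(\xi,\theta)$,
\[
\bigl\lvert\langle\xi,(\wh m-\B{E}(M))\theta\rangle\bigr\rvert\le\bigl\lvert\langle\xi,\wh m\,\theta\rangle-\C{E}(\xi,\theta)\bigr\rvert+\bigl\lvert\C{E}(\xi,\theta)-\langle\xi,\B{E}(M)\theta\rangle\bigr\rvert.
\]
Taking the supremum over $(\xi,\theta)$ and bounding the last term by $B_n$ on the $(1-\delta)$-event via the first assertion gives $\lVert\wh m-\B{E}(M)\rVert_\infty\le\sup_{\xi,\theta}\lvert\C{E}(\xi,\theta)-\langle\xi,\wh m\,\theta\rangle\rvert+B_n$. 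On that same event the choice $\wh m=\B{E}(M)$ already satisfies $\sup_{\xi,\theta}\lvert\C{E}(\xi,\theta)-\langle\xi,\wh m\,\theta\rangle\rvert\le B_n$, so the set of matrices meeting that constraint is nonempty; selecting any such $\wh m$ and inserting it into the previous inequality gives $\lVert\wh m-\B{E}(M)\rVert_\infty\le 2B_n$.

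The argument is mostly bookkeeping, the substantive content residing in Proposition~\ref{prop:2.1} and in the preceding proposition. The two points that warrant a little care are the exact balancing of $\lambda$, which is what makes the two error terms contribute equally and prevents the constant in $B_n$ from doubling, and the symmetric representation of the operator norm, which is what lets the triangle inequality produce the clean factor $2$; beyond these I do not anticipate any genuine obstacle.
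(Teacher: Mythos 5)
Your argument is correct and is exactly the intended route: the paper states Proposition~\ref{prop:3.2} as an immediate consequence of the preceding unnumbered proposition, with the same substitution of the known moment bounds, the same optimization of $\lambda$ yielding $B_n=\sqrt{SL/n}$, the same triangle-inequality step via the symmetric sup representation of $\lVert\cdot\rVert_\infty$, and the same nonemptiness argument via $\B{E}(M)$ itself lying in the confidence region. Nothing to add.
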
 
\begin{rmk}
In particular, choosing $\ds \beta = \gamma = 2 \max \Biggl\{ 
\frac{(t+u)}{v}, \sqrt{\frac{T}{v}} \; \Biggr\}$,  
we get 
\[
B_n \leq \sqrt{\frac{2 v }{n} \Biggl( 2 \log (\delta^{-1}) 
+ 4 \max \Biggl\{ \frac{t + u}{v}, \sqrt{\frac{T}{v}} \; \Biggr\}
\; \Biggr)}. 
\]
The bound $B_n$ is of the type 
$\ds \sqrt{\frac{2 v \bigl[ \C{C} + \log(\delta^{-1}) \bigr]}{n}}$, with a complexity 
(or dimension) term $\C{C}$ equal to 
\[ 
\C{C} = 4 \max \Biggl\{ \, \frac{t + u}{v}, \sqrt{\frac{T}{v}} \; \Biggr\} + 
\log(\delta^{-1}).  
\] 
\end{rmk}
\begin{rmk}
Let us envision a simple case to compare the 
precision of the bounds in a setting where 
dimension-free and dimension-dependent 
bounds coincide. 
Assume more specifically 
that the entries of the matrix $M$, 
\[
M_{i,j} \quad 1 \leq i \leq p, 1 \leq j \leq q, 
\]
are centered and i.i.d.. 
Assume that
$\sigma = 
\sqrt{\B{E}(M_{i,j}^2)}$ is known, 
and take  
\begin{align*}
v & = \sup_{\xi \in \B{S}_p, \theta \in \B{S}_q} \B{E} 
\bigl( \langle \xi, M \theta \rangle^2 \bigr) = \sigma^2,\\
t & = \sup_{\theta \in \B{S}_q} \B{E} \bigl( \lVert M \theta
\rVert^2 \bigr) = p \sigma^2, \\ 
u & = \sup_{\xi \in \B{S}_p} \B{E} \bigl( \lVert M^{\top} \xi \rVert^2 \bigr) 
= q \sigma^2, \\ 
T & = \B{E} \bigl( \lVert M \rVert_{\HS}^2 \bigr) = p q \sigma^2.
\end{align*}
Choosing $\beta = \gamma = 2 (p+q)$, we get a complexity term equal to 
\[ 
\C{C} = 4 ( p + q ) +  \log(\delta^{-1}),
\] 
whereas the bound of the previous section made for vectors has a complexity
factor equal to $pq$. 
\end{rmk}
\subsection[Controlling both errors]{Controlling both the operator norm error and 
the Hilbert-Schmidt error}

There are situations where it is desirable to control both 
$\lVert \wh{m} - \B{E}(M) \rVert_{\infty}$ 
and $\lVert \wh{m} - \B{E}(M) \rVert_{\HS}$. 
To do so we can very easily combine Propositions \vref{prop:2.3}
and Proposition \vref{prop:3.2}, since these two propositions are based on 
the construction of confidence regions.

More precisely, first consider $M\in \B{R}^{p \times q}$ as a vector and use the scalar product 
\[ 
\langle \theta, M \rangle_{\HS} = \Tr \bigl( \theta^{\top} M \bigr), \quad 
\theta \in \B{R}^{p \times q}. 
\] 
Applying Proposition \vref{prop:2.3}, we can build an estimator 
$\C{E}_{\HS}(\theta)$ such that with probability at least $1 - \delta$, 
\[
\sup_{\theta \in \B{R}^{p \times q}, \lVert \theta \rVert_{\HS} = 1} 
\bigl\lvert \, \C{E}_{\HS}(\theta) - \Tr \bigl(
\theta^{\top} \B{E}(M) \bigr) \bigr\rvert 
\leq A_n = \sqrt{\frac{T}{n}} + \sqrt{\frac{2 v \log(\delta^{-1})}{n}}.
\]
On the other hand, we can also apply Proposition \vref{prop:3.2} and build an estimator 
$\C{E}(\xi, \theta), \xi \in \B{S}_p, \theta \in \B{S}_q$, 
such that with probability at least $1 - \delta$, 
\[
\sup_{\xi \in \B{S}_p, \theta \in \B{S}_q} 
\bigl\lvert \, \C{E}(\xi, \theta) - \langle \xi, \B{E}(M) \theta \rangle 
\bigr\rvert 
\leq B_n = \sqrt{\frac{2v}{n} \Biggl( 
2 \log(\delta^{-1}) + 4 \max \Biggl\{ \frac{t + u}{v}, \sqrt{\frac{T}{v}} 
\; \Biggr\} \; \Biggr)}. 
\]

\begin{prop}
\label{prop:3.3.2}
Consider a matrix $\wh{m}$ such that 
\begin{gather*}
\sup_{\theta \in \B{R}^{p \times q}, \lVert \theta \rVert_{\HS} = 1} 
\bigl\lvert \C{E}_{\HS}(\theta) - \Tr \bigl( \theta^{\top} 
\wh{m} \bigr) \bigr\rvert \leq A_n \\
\text{and } \quad \sup_{\xi \in \B{S}_p, \theta \in \B{S}_q} 
\bigl\lvert \C{E}(\xi, \theta) - \langle \xi, \wh{m} \, \theta \rangle 
\bigr\rvert \leq B_n. 
\end{gather*}
Combining Propositions \ref{prop:2.3} and \ref{prop:3.2} 
shows that, with probability at least $1 - 2 \delta$, 
such a matrix $\wh{m}$ exists and satisfies both
\[ 
\lVert \wh{m} - \B{E}(M) \rVert_{\HS} \leq 2 A_n \quad \text{ and } \quad 
\lVert \wh{m} - \B{E}(M) \rVert_{\infty} \leq 2 B_n. 
\] 
\end{prop}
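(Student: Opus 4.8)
The plan is to observe that Proposition~\ref{prop:3.3.2} is essentially a bookkeeping exercise: it asserts that the two confidence regions built in Propositions~\ref{prop:2.3} and \ref{prop:3.2} can be intersected, and that this intersection is nonempty with the stated probability. First I would apply Proposition~\ref{prop:2.3} with $\C{S} = \B{S}_{pq}$ (viewing $M$ as a vector of size $pq$ under the Hilbert--Schmidt inner product), which gives an event $\Omega_1$ of probability at least $1 - \delta$ on which $\sup_{\lVert \theta \rVert_{\HS} = 1} \lvert \C{E}_{\HS}(\theta) - \Tr(\theta^\top \B{E}(M)) \rvert \leq A_n$; in particular $\B{E}(M)$ lies in the HS-confidence region $\{ m : \sup_\theta \lvert \C{E}_{\HS}(\theta) - \Tr(\theta^\top m) \rvert \leq A_n \}$. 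Similarly, applying Proposition~\ref{prop:3.2} yields an event $\Omega_2$ of probability at least $1 - \delta$ on which $\B{E}(M)$ lies in the operator-norm confidence region $\{ m : \sup_{\xi, \theta} \lvert \C{E}(\xi,\theta) - \langle \xi, m\, \theta \rangle \rvert \leq B_n \}$.

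Next I would take the union bound: on $\Omega_1 \cap \Omega_2$, which has probability at least $1 - 2\delta$, the vector $\B{E}(M)$ belongs simultaneously to both confidence regions, so the intersection is nonempty and any $\wh m$ in it — in particular one satisfying the two displayed constraints defining $\wh m$ — exists. Then for such a $\wh m$, on this same event, I combine the triangle inequality with the defining bound: for every $\theta$ with $\lVert \theta \rVert_{\HS} = 1$,
\[
\lvert \Tr(\theta^\top(\wh m - \B{E}(M))) \rvert \leq \lvert \C{E}_{\HS}(\theta) - \Tr(\theta^\top \wh m)\rvert + \lvert \C{E}_{\HS}(\theta) - \Tr(\theta^\top \B{E}(M))\rvert \leq 2 A_n,
\]
and taking the supremum over such $\theta$ gives $\lVert \wh m - \B{E}(M) \rVert_{\HS} \leq 2 A_n$. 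The same argument with the bilinear forms $\langle \xi, \cdot\, \theta \rangle$, $\xi \in \B{S}_p$, $\theta \in \B{S}_q$, and the identity $\lVert N \rVert_\infty = \sup_{\xi \in \B{S}_p, \theta \in \B{S}_q} \langle \xi, N \theta \rangle$ recalled in Section~\ref{sec:4}, yields $\lVert \wh m - \B{E}(M) \rVert_\infty \leq 2 B_n$.

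There is essentially no analytic obstacle here; the only point requiring a word of care is that the two ``bad'' events come from two independent invocations of the PAC-Bayesian inequality on the \emph{same} sample (not a split sample), so the union bound is what controls the total failure probability at $2\delta$ — one should not expect any independence to improve this. A second minor point is to note that both confidence regions are convex (intersections of slabs), hence so is their intersection, which guarantees that the "middle of a diameter" type choice of $\wh m$, or indeed any measurable selection, is legitimate; but for the statement as written it suffices that $\B{E}(M)$ itself witnesses nonemptiness.
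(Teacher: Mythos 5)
Your argument is correct and coincides with what the paper intends: a union bound over the two events of Propositions \ref{prop:2.3} and \ref{prop:3.2}, nonemptiness of the intersected confidence regions witnessed by $\B{E}(M)$ itself, and the triangle inequality in each dual norm to get the factors $2A_n$ and $2B_n$. The paper gives no separate proof beyond this combination, so there is nothing to add.
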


\vskip2mm
Remark that $B_n$ is typically smaller than $A_n$ as expected
in interesting large dimension situations.

\subsection{Centered estimator}
As already done in the case of the estimation of the mean of a random vector, we deduce in this section centered bounds from the uncentered bounds of the previous sections, using sample splitting.

Put $m = \B{E}(M)$ and $\ov{M} = M - m$. Assume that we know finite 
constants $ \ov{v},  \ov{t},  \ov{u},  \ov{T}$ such that 
\begin{align*}
& \sup_{\xi \in \B{S}_p, \theta \in \B{S}_q} \B{E} 
\bigl( \langle \xi, \ov{M} \theta \rangle^2 \bigr) \leq \ov{v} 
< \infty, \\ 
& \sup_{\theta \in \B{S}_q} \B{E} \bigl( \lVert \ov{M} \theta 
\rVert^2 \bigr) \leq \ov{t} < \infty, \\ 
& \sup_{\xi \in \B{S}_p} \B{E} \bigl( \lVert \ov{M}^{\top} \xi \rVert^2
\bigr) \leq \ov{u} < \infty, \\ 
& \B{E} \bigl( \lVert \ov{M} \rVert^2_{\HS} \bigr) \leq \ov{T} < \infty. 
\end{align*}
When this is true, we can take for the previous uncentered constants 
\[ 
v = \ov{v} + \lVert m \rVert_{\infty}^2, \quad
t = \ov{t} + \lVert m \rVert_{\infty}^2, \quad
u = \ov{u} + \lVert m \rVert_{\infty}^2, \quad
T = \ov{T} + \lVert m \rVert_{\HS}^2.
\] 

In view of this, it is suitable to assume 
that we also know some finite 
constants $b$ and $c$ such that
\[ 
\lVert m \rVert_{\infty}^2 \leq b \quad \text{ and } \quad 
\lVert m \rVert_{\HS}^2 \leq c. 
\] 

\vskip2mm
As we see that the Hilbert-Schmidt norm $\lVert m \rVert_{\HS}$
comes into play, we will use the combined 
preliminary estimate provided by Proposition \vref{prop:3.3.2}.

Given an i.i.d. matrix sample $(M_1, \dots, M_n)$, 
first use 
$(M_1, \dots, M_k)$ to build a preliminary estimator $\wt{m}$ 
as described in Proposition \ref{prop:3.3.2}. 
With probability at least $1 - \delta/2$, 
\begin{align*}
\lVert \wt{m} - m \rVert_{\HS} & \leq \sqrt{\frac{A}{k}} \quad \text{ and } 
\quad \lVert \wt{m} - m \rVert_{\infty} \leq \sqrt{\frac{B}{k}}, \\ 
\text{where } A & = 4 \biggl( \sqrt{2 (\ov{v} + b) \log(4/\delta)} + \sqrt{\ov{T} + c}  
\biggr)^2\\ 
\text{and } B & = 8 (\ov{v} + b) \Biggl( 
2 \log(4/\delta) + 4 \max \Biggl\{ \, \frac{\ov{t} + \ov{u} + 2b}{\ov{v} + b}, 
\biggl( \frac{\ov{T} + c}{\ov{v} + b}\biggr)^{1/2} \, \Biggr\} \Biggr).
\end{align*}
Then use the sample $(M_{k+1} - 
\wt{m}, \dots, M_n - \wt{m})$ to build an estimator \linebreak $\C{E}(\xi, 
\theta), \; \xi \in \B{S}_p, 
\; \theta \in \B{S}_q,$
based on the construction described in 
Proposition \vref{prop:3.2}, at confidence level $1 - \delta / 2$. 
It is such that with probability 
at least $1 - \delta$, 
\begin{multline*}
\bigl\lvert \C{E}(\xi, \theta) - \langle \xi, m \, \theta \rangle 
\bigr\rvert \leq C_{n,k} \\ = \sqrt{ \frac{2(\ov{v} + B/k)}{n-k} 
\Biggl( 2 \log(2/\delta) + 4 \max \Biggl\{ \, \frac{\ov{t} + \ov{u} 
+ 2 B/k}{\ov{v} + B/k}, \biggl( \frac{\ov{T} + A/k}{\ov{v} + B/k}
\biggr)^{1/2} \, \Biggr\} \Biggr)}.
\end{multline*}
If we choose for instance $k = \sqrt{n}$, we obtain that 
\[
C_{n,\sqrt{n}} \underset{n \rightarrow \infty}{\sim}
\sqrt{\frac{2 \, \ov{v}}{n} \biggl( 2 \log (2/ \delta) + 4 
\max \Biggl\{ \, \frac{\ov{t} + \ov{u}}{\ov{v}}, \, \biggl( \frac{\ov{T}}{\ov{v}}
\biggr)^{1/2} \, \Biggr\} \Biggr)}.
\]

\section{Adaptive estimators}
\label{sec:5}

The results presented in the previous sections 
assume that there exist 
known upper bounds for some quantities as $\B{E}(\lVert X \rVert^2)$ in the case of a mean vector estimate
or $\B{E}(\lVert M \rVert_{\HS}^2)$ in the matrix case. 
Here we would like to adapt to these quantities, in the case when 
those bounds are not known.

To do so, we will use an asymmetric influence function $\psi
: \B{R}_+ \longrightarrow \B{R}_+$
defined on the positive real line only as 
\begin{equation}
\label{eq:2}
\psi(t) = \begin{cases}
t - t^2 / 2, & 0 \leq t \leq 1, \\ 
1/2, & 1 \leq t.
\end{cases}
\end{equation}
\begin{lemma}
\label{lem:5.1}
For any $t \in \B{R}_+$, 
\[ 
- \log( 1 - t + t^2) \leq \psi(t) \leq \log(1 + t).
\] 
\end{lemma}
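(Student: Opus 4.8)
The plan is to mimic the proof of Lemma~\ref{lem:3.1}, comparing derivatives on the interval where $\psi$ is nonconstant and then extending to the rest of $\B{R}_+$ by monotonicity considerations. Concretely, I would first establish the upper bound $\psi(t) \leq \log(1+t)$. On $[0,1]$, set $g(t) = \log(1+t)$, so that $g'(t) = 1/(1+t)$ while $\psi'(t) = 1 - t$; since $g(0) = \psi(0) = 0$, it suffices to check $\psi'(t) \leq g'(t)$ there, i.e. $(1-t)(1+t) \leq 1$, which reads $1 - t^2 \leq 1$ — obviously true. For $t \geq 1$, $\psi(t) = 1/2$ is constant while $\log(1+t)$ is increasing with $\log 2 > 1/2$, so the inequality persists.

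For the lower bound $-\log(1 - t + t^2) \leq \psi(t)$, I would proceed similarly but one has to be a little careful because $1 - t + t^2 > 0$ for all real $t$ (its discriminant is negative), so $h(t) = -\log(1 - t + t^2)$ is well defined on all of $\B{R}_+$, with $h(0) = 0$ and $h'(t) = (1 - 2t)/(1 - t + t^2)$. On $[0,1]$, I want $h'(t) \leq \psi'(t) = 1 - t$, i.e. $1 - 2t \leq (1-t)(1 - t + t^2)$ after multiplying by the positive denominator; expanding the right-hand side gives $1 - 2t + 2t^2 - t^3$, so the inequality becomes $0 \leq 2t^2 - t^3 = t^2(2 - t)$, which holds for $t \in [0,1]$ (indeed for all $t \leq 2$). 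Hence $h(t) \leq \psi(t)$ on $[0,1]$. For $t \geq 1$, $\psi(t) = 1/2$ and I need $-\log(1 - t + t^2) \leq 1/2$, i.e. $1 - t + t^2 \geq e^{-1/2}$; since $1 - t + t^2$ is increasing for $t \geq 1/2$ and equals $1$ at $t = 1$, we have $1 - t + t^2 \geq 1 > e^{-1/2}$ on that range, completing the argument.

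The only mildly delicate point — and the thing I would flag as the ``main obstacle'' if any — is making sure the denominators one multiplies through by are positive before comparing, which is automatic here since $1 + t > 0$ and $1 - t + t^2 > 0$ everywhere on $\B{R}_+$; everything else is a routine polynomial check of the sign of $t^2(2-t)$ and $-t^2$, plus the trivial numerical facts $\log 2 > 1/2$ and $e^{-1/2} < 1$. I would present it as two short paragraphs, one per inequality, each split into the cases $t \in [0,1]$ and $t \geq 1$, exactly paralleling Lemma~\ref{lem:3.1}.
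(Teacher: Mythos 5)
Your proof is correct and follows essentially the same route as the paper's: compare derivatives of $\psi$ with those of $\log(1+t)$ and $-\log(1-t+t^2)$ on $[0,1]$ (reducing to the signs of $-t^2$ and $t^2(2-t)$), then handle $t\geq 1$ by monotonicity of the two logarithmic bounds against the constant value $1/2$. No gaps; incidentally, your sign for $\psi'-f'$ on $[0,1]$ is the right one (the paper's displayed ``$\leq 0$'' there is a typo for ``$\geq 0$'').
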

\begin{proof}
Let us put $f(t) = - \log(1 - t + t^2)$ and $g(t) = \log(1 + t)$.
Remark that $f(0) = g(0) = \psi(0) = 0$. Remark also that for any 
$t \in [0,1]$, 
\begin{gather*}
f'(t) = \frac{1 - 2t}{1 - t + t^2}, \quad \psi'(t) - f'(t) = 
\frac{t^2(2 - t)}{1 - t + t^2} \leq 0 , \\ \text{ and } \quad g'(t) = 1 - \frac{t}{1+t} \geq \psi'(t)
= 1 - t.
\end{gather*}
As on the interval $[1, \infty[$, $f$ is decreasing, $g$ is increasing
and $\psi$ is constant, this proves the lemma.  
\end{proof}

Similarly to the previous case, considering a standard Gaussian 
real valued random variable $W \sim \C{N}(0,1)$, we can introduce 
the function
\[ 
\varphi(m, \sigma) = \B{E} \bigl\{ \psi \bigl[ (m + \sigma W)_+ \bigr] \bigr\},
\] 
where $t_+ = \max \, \bigl\{ \, t, \, 0 \, \bigr\}$, and explicitly compute $\varphi$ as
\begin{multline*}
\varphi(m, \sigma) 
= \biggl( m - \frac{m^2}{2} - \frac{\sigma^2}{2} 
\biggr) \biggl[ 1 - F \biggl( \frac{-1+m}{\sigma} \biggr) 
- F \biggl( - \frac{m}{\sigma} \biggr) \biggr] 
+ \frac{1}{2} F \biggl( \frac{-1 + m}{\sigma} \biggr)
\\ + \frac{\sigma \bigl( 1 - m /2 \bigr)}{\sqrt{2 \pi}} \exp \biggl( - \frac{m^2}{2 \sigma^2} \biggr)
- \frac{\sigma ( 1 - m)}{2 \sqrt{2 \pi}} \exp \biggl( - \frac{(1-m)^2}{
2 \sigma^2} \biggr), 
\end{multline*}
using the expression 
\[ 
\psi(t_+) = \Bigl( t - t^2/2 \Bigr) \Bigl[ \B{1} ( t \leq 1 ) 
- \B{1}( t \leq 0) \Bigr] + \frac{1}{2} \Bigl[ 1 - \B{1}(t \leq 1) \Bigr], \qquad t \in \B{R}.
\] 
\subsection{Estimation of the mean of a random vector}
Consider a discrete set $\Lambda$ of values of $\lambda$
and a probability measure $\mu$ on $\Lambda$, to 
be chosen more precisely later on. Let $\beta$ be some 
positive parameter that we will also choose later and put
as previously $\rho_{\theta} = \C{N}(\theta, \beta^{-1} I_d)$. 
Define for any $\theta \in \B{S}_d$ 
\begin{align*}
& \C{E}_+(\theta) = \sup_{\lambda \in \Lambda} \frac{1}{n \lambda} \sum_{i=1}^n 
\int \psi \bigl( \lambda \langle \theta', X_i \rangle_+ \bigr) 
\, \ud \rho_{\theta}(\theta') - \frac{\beta + 2 \log \bigl(
\delta^{-1} \mu(\lambda)^{-1} \bigr) }{2 \lambda n}, \\ 
& \C{E}_-(\theta) = \sup_{\lambda \in \Lambda} \frac{1}{n \lambda} 
\sum_{i=1}^n \int \psi \bigl( \lambda \langle \theta', X_i \rangle_- \bigr) 
\, \ud \rho_{\theta}(\theta') - \frac{ \beta + 2 \log \bigl( \delta^{-1} 
\mu(\lambda)^{-1} \bigr)}{2 \lambda n}, \\
& \text{and } \C{E} (\theta) = \C{E}_+(\theta) - \C{E}_-(\theta).  
\end{align*}
Thoughtful readers may wonder why we introduce $\lambda$ in this way 
and do not use instead $\rho_{\lambda \theta}$, to get a uniform 
result in $\lambda \theta$ in one shot, 
without introducing the discrete set $\Lambda$. 
It is because this option would produce the entropy factor 
$\ds \frac{\lambda \beta}{2n}$ instead of $\ds \frac{\beta}{2n \lambda}$, 
requiring a value of $\beta$ depending on unknown moments 
of the distribution of $X$. 

According to the PAC-Bayesian inequality of Proposition \vref{prop:2.1}, 
with probability at least 
$1 -  2 \delta$, 
\begin{multline*}
\int \B{E} \bigl( \langle \theta', X \rangle
_+ \bigr) \, \ud \rho_{\theta}(\theta') \\ - \inf_{\lambda \in \Lambda}
\Biggl\{ \lambda \int \B{E} \bigl( \langle \theta', X \rangle_+^2
\bigr) \, \ud \rho_{\theta}(\theta')  + \frac{ \beta + 2 \log \bigl( 
\delta^{-1} \mu(\lambda)^{-1} \bigr)}{\lambda n} \Biggr\} \\ \leq  \C{E}_+(\theta) \leq \int \B{E} \bigl( 
\langle \theta', X \rangle_+ \bigr)
\, \ud \rho_{\theta}(\theta'). 
\end{multline*}
More precisely, to obtain the above inequalities, we have used a union bound with 
respect to $\lambda \in \Lambda$, starting from the
fact that, when we replace the infimum in $\lambda$ in the 
previous equation with a fixed value of $\lambda \in \Lambda$, 
it holds with probability at least $1 - 2 \mu(\lambda) \delta$. 
Since 
\[
\int f(- \theta') \, \ud \rho_{\theta} (\theta') = \int f(\theta') 
\, \ud \rho_{- \theta}( \theta'), 
\]
this implies also that 
\begin{multline*}
\int \B{E} \bigl( \langle \theta', X \rangle
_- \bigr) \, \ud \rho_{\theta}(\theta') \\ - \inf_{\lambda \in \Lambda}
\Biggl\{ \lambda \int \B{E} \bigl( \langle \theta', X \rangle_-^2
\bigr) \, \ud \rho_{\theta}(\theta')  + \frac{ \beta + 2 \log \bigl( 
\delta^{-1} \mu(\lambda)^{-1} \bigr)}{\lambda n} \Biggr\} \\ \leq  \C{E}_-(\theta) \leq \int \B{E} \bigl( 
\langle \theta', X \rangle_- \bigr)
\, \ud \rho_{\theta}(\theta'). 
\end{multline*}
Therefore, with probability at least $1 - 2 \delta$, 
\begin{multline*}
- B_-(\theta) = - \inf_{\lambda \in \Lambda} \Biggl\{ \lambda \int \B{E} \bigl( 
\langle \theta', X \rangle_+^2 \bigr) \, \ud \rho_{\theta}(\theta') 
+ \frac{\beta + 2 \log \bigl( \delta^{-1} \mu(\lambda)^{-1} \bigr)}{
\lambda n} \Biggr\}.
\\ \leq
\C{E}(\theta) - \langle \theta, \B{E}(X) \rangle 
\leq  
B_+(\theta) = \inf_{\lambda \in \Lambda} \Biggl\{ \lambda \int \B{E} \bigl( 
\langle \theta', X \rangle_-^2 \bigr) \, \ud \rho_{\theta}(\theta') 
\\ + \frac{\beta + 2 \log \bigl( \delta^{-1} \mu(\lambda)^{-1} \bigr)}{
\lambda n} \Biggr\}.
\end{multline*}
This defines for $\langle \theta, \B{E} ( X ) \rangle$ a confidence interval 
of length no greater than 
\[ 
B(\theta) = \inf_{\lambda \in \Lambda} \Biggl\{ \lambda \int \B{E} 
\bigl( \langle \theta', X \rangle^2 \bigr) \, \ud \rho_{\theta}(\theta') + 
\frac{2 \beta + 4 \log \bigl( \delta^{-1} \mu(\lambda)^{-1} \bigr)}{\lambda n}
\Biggr\}.
\]  
Unfortunately, neither $B_+(\theta)$, $B_-(\theta)$ nor 
$B(\theta)$ are observable. 
But, nevertheless, we can build an estimator $\wh{m}$ such that 
\[ 
\sup_{\theta \in \B{S}_d} \bigl\{ 
\langle \theta, \wh{m} \rangle - \C{E}(\theta) \bigr\} 
= \inf_{m \in \B{R}^d} \sup_{\theta \in \B{S}_d} \bigl\{ 
\langle \theta, m \rangle 
- \C{E}(\theta) \bigr\}.   
\] 
It satisfies with probability at least $1 - 2 \delta$, 
\begin{multline*}
\lVert \wh{m} - \B{E}(X) \rVert = \sup_{\theta \in \B{S}_d} 
\langle \theta, \wh{m} - \B{E}(X) \rangle \\ \leq \sup_{\theta \in \B{S}_d} 
\bigl\{ \langle \theta, \wh{m} \rangle - \C{E}(\theta) \bigr\}  + \sup_{\theta 
\in \B{S}_d} \bigl\{ \C{E}(\theta) - \langle \theta, \B{E}(X) \rangle 
\bigr\} \leq 2 \sup_{\theta \in \B{S}_d} 
\bigl\{ \C{E}(\theta) - \langle \theta, \B{E}(X) \rangle \bigr\}
\\ \leq 2 \sup_{\theta} B_+(\theta) \leq \sup_{\theta \in \B{S}_d} 
\inf_{\lambda \in \Lambda} 2 \lambda \int \B{E}\bigl( \langle \theta', 
X \rangle^2 \bigr) \, \ud \rho_{\theta}(\theta') + \frac{ 
2 \beta + 4 \log \bigl(\delta^{-1} \mu(\lambda)^{-1}\bigr)}{
\lambda n} \\ = \sup_{\theta \in \B{S}_d} \inf_{\lambda \in \Lambda} 
2 \lambda \biggl( \B{E} \bigl( \langle \theta, X \rangle^2 \bigr) + 
\frac{\B{E} \bigl( \lVert X \rVert^2 \bigr)}{\beta}\biggr) 
+ \frac{ 2 \beta + 4 \log \bigl( 
\delta^{-1} \mu(\lambda)^{-1} \bigr)}{\lambda n}.
\end{multline*}

\begin{lemma}
Let us choose $\beta = 2 \log \bigl( \delta^{-1} \bigr)$
and put $v = \sup_{\theta \in \B{S}_d} \B{E} \bigl( \langle 
\theta, X \rangle^2 \bigr)$ and $T = \B{E}(\lVert X \rVert^2)$. 
With probability at least $1 - 2 \delta$, 
\begin{multline*}
\lVert \wh{m} - \B{E}(X) \rVert \leq \inf_{\lambda \in \Lambda} 
B(\lambda), \\ \text{where } B(\lambda) =  \Biggl\{ 
2 \lambda \biggl( v + \frac{T}{2 \log(\delta^{-1})} \biggr) + \frac{
8 \log (\delta^{-1}) + 4 \log \bigl( \mu(\lambda)^{-1} \bigr)}{\lambda n}
\Biggr\}.
\end{multline*}
\end{lemma}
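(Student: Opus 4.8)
The plan is to start directly from the chain of inequalities obtained just above the statement, which already guarantees that on an event of probability at least $1 - 2 \delta$,
\[
\lVert \wh{m} - \B{E}(X) \rVert \leq \sup_{\theta \in \B{S}_d} \inf_{\lambda \in \Lambda} \; 2 \lambda \biggl( \B{E} \bigl( \langle \theta, X \rangle^2 \bigr) + \frac{\B{E}(\lVert X \rVert^2)}{\beta} \biggr) + \frac{2 \beta + 4 \log \bigl( \delta^{-1} \mu(\lambda)^{-1} \bigr)}{\lambda n}.
\]
First I would bound the only $\theta$-dependent quantity, $\B{E}(\langle \theta, X \rangle^2)$, by $v = \sup_{\theta \in \B{S}_d} \B{E}(\langle \theta, X \rangle^2)$, and substitute $\B{E}(\lVert X \rVert^2) = T$. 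After this substitution the expression being optimized over $\lambda$ no longer depends on $\theta$ and is bounded above by $2 \lambda \bigl( v + T/\beta \bigr) + \bigl( 2 \beta + 4 \log ( \delta^{-1} \mu(\lambda)^{-1} ) \bigr)/(\lambda n)$.

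Next, since this last bound is independent of $\theta$, the elementary inequality $\sup_{\theta} \inf_{\lambda} g(\theta, \lambda) \leq \inf_{\lambda} \sup_{\theta} g(\theta, \lambda)$ allows me to drop the outer supremum over $\theta \in \B{S}_d$ and keep only $\inf_{\lambda \in \Lambda}$ applied to the $\theta$-free upper bound. It then remains to insert the prescribed value $\beta = 2 \log(\delta^{-1})$: the term $2 \beta$ becomes $4 \log(\delta^{-1})$, and combining it with $4 \log( \delta^{-1} \mu(\lambda)^{-1} ) = 4 \log(\delta^{-1}) + 4 \log(\mu(\lambda)^{-1})$ yields the numerator $8 \log(\delta^{-1}) + 4 \log(\mu(\lambda)^{-1})$, while $T/\beta$ becomes $T/\bigl(2 \log(\delta^{-1})\bigr)$. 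This is exactly $B(\lambda)$, so on the same event $\lVert \wh{m} - \B{E}(X) \rVert \leq \inf_{\lambda \in \Lambda} B(\lambda)$, which is the claim.

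There is essentially no analytic obstacle: the lemma is a bookkeeping consequence of the PAC-Bayesian deviation control already established in this subsection, together with the moment bounds $v$, $T$ and the explicit choice of $\beta$. The only mild point worth spelling out is the $\sup$–$\inf$ exchange, which is used only in the easy direction and is valid because, after bounding by $v$ and $T$, the optimized quantity is independent of the direction $\theta$; no minimax argument is needed.
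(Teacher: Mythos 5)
Your proof is correct and follows exactly the route the paper intends: the lemma is a direct bookkeeping consequence of the displayed chain of inequalities immediately preceding it, obtained by bounding $\B{E}(\langle \theta, X\rangle^2)$ by $v$, replacing $\B{E}(\lVert X\rVert^2)$ by $T$, and substituting $\beta = 2\log(\delta^{-1})$, with the $\sup_\theta$ dropping out once the bound is $\theta$-free. The arithmetic ($2\beta + 4\log(\delta^{-1}\mu(\lambda)^{-1}) = 8\log(\delta^{-1}) + 4\log(\mu(\lambda)^{-1})$) checks out, so nothing is missing.
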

To turn this lemma into an explicit bound, we need now to choose $\Lambda$ 
and $\mu \in \C{M}_+^1 \bigl( \Lambda \bigr)$. 
Consider for some real parameters $\sigma > 0$ and $\alpha > 1$,
\[ 
\Lambda = \Biggl\{ \, \frac{\alpha^k}{\sigma \sqrt{n}} \, : \, 
k \in \B{Z} \, \Biggr\}.
\] 
For any $\ds \lambda_k = \frac{\alpha^k}{\sigma \sqrt{n}} \in \Lambda$, put 
\[ 
\mu(\lambda) = 
\begin{cases}
\ds \frac{1}{2 \bigl(\lvert k \rvert + 1 \bigr) \bigl( \lvert k \rvert + 2 \bigr)}, 
& k \neq 0,\\
1/2, & k = 0, 
\end{cases}
\] 
and remark that 
\[
\mu(\lambda_k) \geq \frac{1}{2 \bigl( \lvert k \rvert + 2 \bigr)^2},
\qquad k \in \B{Z}. 
\]
Put also 
\[ 
\lambda_* = \sqrt{\frac{\ds 4 \log(\delta^{-1})}{\ds n \biggl(
v + \frac{T}{2 \log(\delta^{-1})} \biggr)}}.
\] 
The bound $B(\lambda)$ appearing in the previous lemma can be 
written as
\[
B(\lambda) = 4 \sqrt{ \frac{2}{n} \Bigl(2 v \log(\delta^{-1}) + T \Bigr)}
\; 
\Biggl[ \cosh \Biggl( \log \biggl( \frac{\lambda}{\lambda_*} \biggr) \Biggr) + 
\frac{\log \bigl( \mu(\lambda)^{-1} \bigr)}{4 \log(\delta^{-1})} 
\frac{\lambda*}{\lambda} \Biggr].
\]
Since $\log(\lambda_k) = k \log (\alpha) - \log(\sigma) - \log(n)/2$, there exists $k_* \in \B{Z}$ such that
\[ 
\bigl\lvert \, \log \bigl( \lambda_{k_*} / \lambda_* \bigr) \, \bigr\rvert 
\leq \log (\alpha) / 2, 
\] 
so that 
\[ 
\lvert k_* \rvert \, \leq \bigl\lvert \, \log \bigl( \sigma \lambda_* \sqrt{n}
\, \bigr) 
\bigr\rvert / \log(\alpha)  
+ 1 / 2.
\] 
Therefore 
\[
\inf_{\lambda \in \Lambda} B(\lambda) \leq B(\lambda_{k_*}) 
\leq 4 C \sqrt{\frac{2}{n} \Bigl( 2 v \log(\delta^{-1}) + T \Bigr)} 
\]
where the constant $C$ is equal to 
\[
C = \cosh \Biggl( \, \frac{\log(\alpha)}{2} \, \Biggr) 
+ \frac{\sqrt{\alpha}}{2 \log(\delta^{-1})}  
\log  \Biggl[  \frac{1}{\sqrt{2}\log(\alpha)} \Biggl\lvert \log \Biggl( 
\frac{\ds
2 v \log(\delta^{-1}) + T }{
8 \sigma^2 \log(\delta^{-1})^2} 
\Biggr) \Biggr\rvert   + \frac{5}{\sqrt{2}} \, \Biggr].
\]
We see that the constant $\sigma^2$ can be interpreted as our best guess  
of the ratio 
\[ 
\frac{2 v \log(\delta^{-1}) + T}{8 \log \bigl(\delta^{-1})^2}.
\] 
However, this guess may be very loose without harming the constant 
$C$ too much. Indeed, to give an example,  
if we choose $\alpha = e$
and we assume that we made an error of magnitude $10^6$ 
on the choice of $\sigma^2$, compared to the optimal guess, 
we get 
\[
C \leq \cosh(1/2) + \frac{\exp(1/2)}{2 \log(\delta^{-1})} 
\log \Biggl[ \frac{1}{\sqrt{2}} \log ( 10^6 )  + \frac{5}{\sqrt{2}} 
\biggr]  \leq 
1.13 + \frac{2.2}{\log(\delta^{-1})},  
\]so that if we work at the confidence level corresponding to $\delta = 1/100$, 
we obtain that $C \leq 1.6$. In brief, the message is 
that $C$ is typically between one and two. 

\subsection{Adaptive estimation of the mean of a random matrix}
We consider here the same framework as in Section \vref{sec:3}.
Let $M \in \B{R}^{p \times q}$ be a random matrix and
 $(M_1, \dots, M_n)$ be a sample made of $n$ 
independent copies of $M$.

Using the asymmetric 
influence function $\varphi$ defined by 
equation \myeq{eq:2}, given $\xi \in \B{S}_p, \theta \in \B{S}_q$, we define the estimators 
\begin{multline*} 
\C{E}_+(\xi, \theta) = 
\sup_{\lambda \in \Lambda} 
\Biggl\{ \frac{1}{\lambda n} \sum_{i=1}^n 
\int \psi \bigl[ \lambda \langle \xi', M_i \theta' \rangle_+ \bigr] \, 
\ud \nu_{\xi}(\xi') \, \ud \rho_{\theta}(\theta') \\ 
\shoveright{- \frac{\beta + \gamma + 
2 \log \bigl( \delta^{-1} \mu(\lambda)^{-1} \bigr)}{
2 \lambda n}  \Biggr\}, 
} 
\\ 
\shoveleft{\C{E}_-(\xi, \theta) = - \C{E}_+( - \xi, \theta)
= \sup_{\lambda \in \Lambda} 
\Biggl\{ \frac{1}{\lambda n} \sum_{i=1}^n 
\int \psi \bigl[ \lambda \langle \xi', M_i \theta' \rangle_- \bigr] \, 
\ud \nu_{\xi}(\xi') \, \ud \rho_{\theta}(\theta')} \\ 
- \frac{\beta + \gamma + 
2 \log \bigl( \delta^{-1} \mu(\lambda)^{-1} \bigr)}{
2 \lambda n}  \Biggr\}, 
\end{multline*}
and 
$\C{E}(\xi, \theta) = \C{E}_+(\xi, \theta) - \C{E}_-(\xi, \theta). $

\begin{lemma}
With probability at least $1 - 2 \delta$, for any $\xi \in \B{S}_p, 
\theta \in \B{S}_q$, 
\begin{multline*}
- \inf_{\lambda \in \Lambda} \Biggl\{
\lambda \int \B{E} \bigl( \langle \xi', M \theta' \rangle_+^2 \bigr)
\, \ud \nu_{\xi}(\xi') \ud \rho_{\theta}(\theta') 
+ \frac{ \beta + \gamma + 2 \log \bigl( \delta^{-1} \mu(\lambda)^{-1} \bigr)
}{n \lambda} \Biggr\} 
\\ \leq \C{E}_+ \bigl(\xi, \theta) - \int \B{E} \bigl( \langle \xi', 
M \theta' \rangle_+ \bigr) \, \ud \nu_{\xi}(\xi') \ud 
\rho_{\theta}(\theta') \leq 0, 
\end{multline*}
so that 
\begin{multline*}
- B_+ ( - \xi, \theta ) \leq \C{E}(\xi, \theta) - \langle \, \xi, \B{E}(M) \, 
\theta \, \rangle \leq 
B_+(\xi, \theta) \\ =  \inf_{\lambda \in \Lambda} 
\Biggl\{ \lambda \int \B{E} \bigl( \langle \xi', M \theta' \rangle_-^2 
\bigr) \, \ud \nu_{\xi}(\xi') \ud \rho_{\theta}(\theta') 
+ \frac{\beta + \gamma + 2 \log \bigl( \delta^{-1} \mu(\lambda)^{-1} 
\bigr)}{\lambda n} \Biggr\} \\ 
\leq \inf_{\lambda \in \Lambda} \Biggl\{ \lambda \Biggl[ 
\B{E} \bigl( \langle \xi, M \theta \rangle^2 \bigr) 
+ \frac{\B{E} \bigl( \lVert M \theta \rVert^2 \bigr)}{\beta} + 
\frac{\B{E} \bigl( \lVert M^{\top} \xi \rVert^2 \bigr)}{\gamma} + 
\frac{\B{E} \bigl( \lVert M \rVert_{\HS}^2 \bigr)}{\beta \gamma}  
\Biggr] \\ + \frac{ \beta + \gamma + 2 \log \bigl( \delta^{-1} 
\mu(\lambda)^{-1} \bigr)}{\lambda n} \Biggr\}. 
\end{multline*}
\end{lemma}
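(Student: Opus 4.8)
The plan is to replay, for the product Gaussian perturbation $\nu_\xi\otimes\rho_\theta$, the argument already used for the adaptive mean-vector estimator, the new ingredient being the behaviour of the bilinear form $\langle\xi',M\theta'\rangle$ under this perturbation. Fix first a scale $\lambda\in\Lambda$. Since $0\le\psi\le 1/2$ on $\B{R}_+$ (equation \eqref{eq:2}), the maps $(\xi',\theta')\mapsto\pm\psi\bigl(\lambda\langle\xi',M\theta'\rangle_+\bigr)$ are bounded, so I would apply Proposition~\ref{prop:2.1} on the parameter space $\B{R}^p\times\B{R}^q$, with prior $\nu_0\otimes\rho_0$ and posterior $\nu_\xi\otimes\rho_\theta$, at confidence level $\mu(\lambda)\delta$, once with $f=\psi(\lambda\langle\xi',M\theta'\rangle_+)$ and once with $f=-\psi(\lambda\langle\xi',M\theta'\rangle_+)$. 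As the Kullback divergence tensorises, $\C{K}(\nu_\xi\otimes\rho_\theta,\nu_0\otimes\rho_0)=\C{K}(\nu_\xi,\nu_0)+\C{K}(\rho_\theta,\rho_0)=\bigl(\beta\lVert\xi\rVert^2+\gamma\lVert\theta\rVert^2\bigr)/2=(\beta+\gamma)/2$ for $\xi\in\B{S}_p$, $\theta\in\B{S}_q$, so each application holds, on an event of probability at least $1-\mu(\lambda)\delta$, simultaneously for all such $\xi$ and $\theta$.

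Next, on the first event I would use $\exp\psi(t)\le 1+t$ from Lemma~\ref{lem:5.1} together with $\log(1+z)\le z$ to get $\log\B{E}\exp\psi\bigl(\lambda\langle\xi',M\theta'\rangle_+\bigr)\le\lambda\B{E}\langle\xi',M\theta'\rangle_+$, and on the second one $\exp(-\psi(t))\le 1-t+t^2$ (the quadratic being positive) to get $\log\B{E}\exp\bigl(-\psi(\lambda\langle\xi',M\theta'\rangle_+)\bigr)\le-\lambda\B{E}\langle\xi',M\theta'\rangle_+ +\lambda^2\B{E}\langle\xi',M\theta'\rangle_+^2$. After dividing the PAC-Bayesian inequalities by $\lambda$, the complexity contribution $\bigl(\C{K}+\log(\delta^{-1}\mu(\lambda)^{-1})\bigr)/(\lambda n)$ is exactly $\bigl(\beta+\gamma+2\log(\delta^{-1}\mu(\lambda)^{-1})\bigr)/(2\lambda n)$, that is, precisely the penalty subtracted in the definition of $\C{E}_\pm$; subtracting it, the first inequality reduces, once one takes the supremum over $\lambda$, to $\C{E}_+(\xi,\theta)\le\int\B{E}\langle\xi',M\theta'\rangle_+\,\ud\nu_\xi\ud\rho_\theta$, while the second gives the matching lower bound with the penalty now counted twice, hence the denominator $\lambda n$ instead of $2\lambda n$ in the infimum. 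A union bound over $\lambda\in\Lambda$ with weights $2\mu(\lambda)$ costs a total probability $2\delta\sum_\lambda\mu(\lambda)=2\delta$, and produces the first displayed inequality of the lemma, uniformly in $\xi\in\B{S}_p$, $\theta\in\B{S}_q$.

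Then, to pass from $\C{E}_+$ to $\C{E}=\C{E}_+-\C{E}_-$, I would note that the substitution $\xi'\mapsto-\xi'$ sends $\nu_{-\xi}$ to $\nu_\xi$ and $\langle\xi',M\theta'\rangle_+$ to $\langle\xi',M\theta'\rangle_-$, so that the bounds on $\C{E}_+$, evaluated at $-\xi$ in place of $\xi$, directly control $\C{E}_-(\xi,\theta)$, and moreover $B_+(-\xi,\theta)=\inf_{\lambda\in\Lambda}\bigl\{\lambda\int\B{E}\langle\xi',M\theta'\rangle_+^2\,\ud\nu_\xi\ud\rho_\theta+\bigl(\beta+\gamma+2\log(\delta^{-1}\mu(\lambda)^{-1})\bigr)/(\lambda n)\bigr\}$. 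Combining the two-sided control of $\C{E}_+$ at $\xi$ and at $-\xi$ with the identity $\int\B{E}\bigl[\langle\xi',M\theta'\rangle_+-\langle\xi',M\theta'\rangle_-\bigr]\,\ud\nu_\xi\ud\rho_\theta=\int\B{E}\langle\xi',M\theta'\rangle\,\ud\nu_\xi\ud\rho_\theta=\langle\xi,\B{E}(M)\theta\rangle$ (the means of $\nu_\xi$ and $\rho_\theta$ being $\xi$ and $\theta$) yields $-B_+(-\xi,\theta)\le\C{E}(\xi,\theta)-\langle\xi,\B{E}(M)\theta\rangle\le B_+(\xi,\theta)$. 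Finally, $\langle\xi',M\theta'\rangle_-^2\le\langle\xi',M\theta'\rangle^2$ and the Gaussian moment identity $\int\langle\xi',M\theta'\rangle^2\,\ud\nu_\xi\ud\rho_\theta=\langle\xi,M\theta\rangle^2+\lVert M\theta\rVert^2/\beta+\lVert M^{\top}\xi\rVert^2/\gamma+\lVert M\rVert_{\HS}^2/(\beta\gamma)$ already used in Section~\ref{sec:4}, after taking expectations, turn $B_+$ into the last displayed bound.

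The only genuinely delicate point is this bookkeeping of the complexity term: one must check that the quantity produced by Proposition~\ref{prop:2.1} is precisely what is subtracted inside $\C{E}_\pm$, so that it disappears in the upper direction and exactly doubles in the lower one; everything else is a transcription of the vector argument, the product perturbation $\nu_\xi\otimes\rho_\theta$ being responsible for the three correction terms $\lVert M\theta\rVert^2/\beta$, $\lVert M^{\top}\xi\rVert^2/\gamma$ and $\lVert M\rVert_{\HS}^2/(\beta\gamma)$.
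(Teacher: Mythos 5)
Your proof is correct and follows exactly the route the paper intends: the lemma is stated without proof as the matrix analogue of the adaptive vector argument of the preceding subsection, and you reproduce that argument with the tensorized Kullback term $(\beta+\gamma)/2$, the two one-sided applications of Proposition \ref{prop:2.1} via Lemma \ref{lem:5.1}, the weighted union bound over $\Lambda$, and the product-Gaussian second-moment identity. Your bookkeeping of the penalty (cancelling in the upper direction, doubling in the lower one) and your reading $\C{E}_-(\xi,\theta)=\C{E}_+(-\xi,\theta)$ (consistent with the sup-formula, despite the stray minus sign in the paper's display) are both right.
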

Choose $\beta = \gamma = 2 \chi \log (\delta^{-1})$, with $\chi >0$. Let 
\[
\Lambda = \biggl\{ \, \lambda_k = 
\frac{\alpha^k}{\sigma \sqrt{n}}\, : \, k \in \B{Z} \, \biggr\}
\]
and 
\[ 
\mu(\lambda_k)^{-1} \leq  2 \bigl( \lvert k \rvert + 2 \bigr)^2,
\] 
as in the previous section. 
Put 
\begin{align*}
v & = \B{E} \bigl( \langle \xi, M \theta \rangle^2 \bigr) 
\; \leq \sup_{\xi \in \B{S}_p, \theta \in \B{S}_q }  \B{E} \bigl( \langle \xi, M \theta \rangle^2 \bigr)= v_*, \\ 
t & = \B{E} \bigl( \lVert M \theta \rVert^2 \bigr) 
\; \leq \sup_{ \theta 
\in \B{S}_q }  \B{E} \bigl( \lVert M \theta \rVert^2 \bigr)= t_*, \\ 
u & = \B{E} \bigl( \lVert M^{\top} \xi \rVert^2 \bigr) 
\; \leq \sup_{\xi \in \B{S}_p} \B{E} \bigl( \lVert M^{\top} \xi \rVert^2 
\bigr)= u_*, \\ 
T & = \B{E} \bigl( \lVert M \rVert_{\HS}^2 \bigr), \\ 
\ell & = \log(\delta^{-1}), \\ 
\lambda_*^2 & = \frac{2 (1 + \chi) \ell^2}{\ds n \biggl(  \ell v 
+ \frac{t + u}{\chi} + \frac{T}{ \ell \chi^2} \biggr)}.
\end{align*}
Remark that, in a similar way to the case of a vector treated in 
the previous section,  
\begin{multline*}
B_+(\xi, \theta) 
= \inf_{\lambda \in \Lambda} \lambda \biggl( v + \frac{t + u}{\chi \ell} 
+ \frac{T}{ \chi^2 \ell^2} \biggr) + \frac{ 2 (\chi + 1) \ell 
+ 2 \log(\mu(\lambda)^{-1})}{ \lambda n} 
\\ \leq \inf_{\lambda \in \Lambda} 
 2 \sqrt{ \frac{2(1 + \chi)}{n} \biggl( \ell v + \frac{t + u}{\chi} 
+ \frac{T}{ \ell \chi^2} \biggr)} \Biggl\{ 
\cosh \biggl[ \log \biggl( \frac{\lambda}{\lambda_*} \biggr) 
\biggr] + \frac{\lambda_* \log \bigl( \mu(\lambda)^{-1} \bigr) }{2 
\lambda (1 + \chi) \ell} \Biggr\}. 
\end{multline*}
Replacing $\lambda_*$ by its value, choosing $\lambda = \lambda_{k_*}$ 
such that $ \bigl\lvert \log( \lambda / \lambda_* ) \bigr\rvert \leq 
\log(\alpha) / 2$ and remarking that 
\[ 
\lvert k_* \rvert \leq \frac{ \bigl\lvert \log \bigl( \sqrt{n} \sigma \lambda_* \bigr) 
\bigr\rvert}{\log(\alpha)} + \frac{1}{2}, 
\]  
we obtain 
\begin{prop}
With probability at least $1 - 2 \delta$, for any $\xi \in \B{S}_p$, 
any $\theta \in \B{S}_q$, 
\begin{multline*}
\bigl\lvert \C{E}(\xi, \theta) - \langle \xi, \B{E}(M) \theta \rangle
\bigr\rvert
\\ \leq B(\xi, \theta) = 2 C \sqrt{ \frac{2(1 + \chi)}{n} \biggl( 
v \log(\delta^{-1}) 
+ \frac{t + u}{ \chi} + \frac{T}{\chi^2 \log(\delta^{-1})} 
\biggr)},  
\end{multline*}
where, using the abbreviation $\ell = \log(\delta^{-1})$,  
\begin{multline*}
C = \cosh \biggl( \frac{\log(\alpha)}{2} \biggr) 
\\ + \frac{\sqrt{\alpha}}{(1 + \chi) \ell } 
\log 
\left\{ \rule[-3.5ex]{0pt}{9ex} \right. \frac{1}{\sqrt{2} \log(\alpha)} 
\left\lvert \rule[-3.5ex]{0pt}{9ex} \right. 
\log 
\left( \rule[-3.5ex]{0pt}{9ex} \right. 
\frac{\ds \rule[-2.5ex]{0pt}{4ex} 
\ell v + \frac{t + u}{\chi} + \frac{T}{\ell \chi^2}}{
\ds \rule{0ex}{3ex} 2 \sigma^2(1 + \chi) \ell^2} 
\left. \rule[-3.5ex]{0pt}{9ex} \right) 
\left. \rule[-3.5ex]{0pt}{9ex} \right\rvert 
+ \frac{5}{\sqrt{2}} 
\left. \rule[-3.5ex]{0pt}{9ex} \right\}.
\end{multline*}
Let us now consider an estimator $\wh{m}$ such that 
\[ 
\sup_{\xi \in \B{S}_p, \theta \in \B{S}_q} 
\bigl\lvert \C{E}(\xi, \theta) - \langle \xi, \wh{m} \, \theta \rangle 
\bigr\rvert 
\leq \inf_{m \in \B{R}^{p \times q}} 
\sup_{\xi \in \B{S}_p, \theta \in \B{S}_q} 
\bigl\lvert \C{E}(\xi, \theta) - \langle \xi, m \,\theta \rangle 
\bigr\rvert. 
\] 
With probability at least $1 - 2 \delta$, 
\[ 
\lVert \wh{m} - \B{E}(M) \rVert_{\infty} \leq 2 
\sup_{\xi \in \B{S}_p, \theta \in \B{S}_q} B(\xi, \theta).
\] 
\end{prop}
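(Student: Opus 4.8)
The plan is to combine the two-sided bound of the lemma stated just above with the explicit optimisation over $\Lambda$ already performed there, and then to run the standard confidence-region argument to pass from the functional estimator $\C{E}(\xi,\theta)$ to a matrix estimator $\wh{m}$. For the first displayed inequality I would start from that lemma, which gives, with probability at least $1-2\delta$ and uniformly over $\xi\in\B{S}_p$, $\theta\in\B{S}_q$,
\[
-B_+(-\xi,\theta)\le\C{E}(\xi,\theta)-\langle\xi,\B{E}(M)\theta\rangle\le B_+(\xi,\theta).
\]
Since $\langle\xi',M\theta'\rangle_\pm^2\le\langle\xi',M\theta'\rangle^2$, both $B_+(\xi,\theta)$ and $B_+(-\xi,\theta)$ are already bounded in that lemma by $\inf_{\lambda\in\Lambda}\bigl\{\lambda\bigl(v+t/\beta+u/\gamma+T/(\beta\gamma)\bigr)+(\beta+\gamma+2\log(\delta^{-1}\mu(\lambda)^{-1}))/(\lambda n)\bigr\}$, with $v,t,u$ the pointwise second moments attached to $(\xi,\theta)$, and these quantities (as well as $T$) are invariant under $\xi\mapsto-\xi$, so the same bound $B(\xi,\theta)$ will control both sides. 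Substituting $\beta=\gamma=2\chi\ell$ and completing the product with $\lambda_*$ rewrites this quantity in the $\cosh$ form displayed just before the statement; choosing $k_*$ with $\lvert\log(\lambda_{k_*}/\lambda_*)\rvert\le\log(\alpha)/2$, using $\mu(\lambda_k)^{-1}\le2(\lvert k\rvert+2)^2$ together with the stated bound on $\lvert k_*\rvert$ in terms of $\lvert\log(\sqrt{n}\,\sigma\lambda_*)\rvert$, the inequality $\lambda_*/\lambda_{k_*}\le\sqrt{\alpha}$, and finally replacing $\lambda_*$ by its value, one reads off the constant $C$ and obtains $\lvert\C{E}(\xi,\theta)-\langle\xi,\B{E}(M)\theta\rangle\rvert\le B(\xi,\theta)$ uniformly.

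For the second displayed inequality I would work on the same event of probability at least $1-2\delta$. On it $\B{E}(M)$ lies in the random region
\[
\bigl\{\,m\in\B{R}^{p\times q}:\ \sup_{\xi\in\B{S}_p,\,\theta\in\B{S}_q}\lvert\C{E}(\xi,\theta)-\langle\xi,m\,\theta\rangle\rvert\le\sup_{\xi,\theta}B(\xi,\theta)\,\bigr\},
\]
which is therefore nonempty, so $\inf_{m}\sup_{\xi,\theta}\lvert\C{E}(\xi,\theta)-\langle\xi,m\,\theta\rangle\rvert\le\sup_{\xi,\theta}B(\xi,\theta)$ and the chosen $\wh{m}$ also satisfies $\sup_{\xi,\theta}\lvert\C{E}(\xi,\theta)-\langle\xi,\wh{m}\,\theta\rangle\rvert\le\sup_{\xi,\theta}B(\xi,\theta)$. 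Then, for every $\xi\in\B{S}_p$, $\theta\in\B{S}_q$,
\[
\lvert\langle\xi,(\wh{m}-\B{E}(M))\theta\rangle\rvert\le\lvert\langle\xi,\wh{m}\,\theta\rangle-\C{E}(\xi,\theta)\rvert+\lvert\C{E}(\xi,\theta)-\langle\xi,\B{E}(M)\theta\rangle\rvert\le2\sup_{\xi,\theta}B(\xi,\theta),
\]
and taking the supremum over $\xi\in\B{S}_p$, $\theta\in\B{S}_q$, together with the identity $\lVert N\rVert_\infty=\sup_{\xi\in\B{S}_p,\theta\in\B{S}_q}\langle\xi,N\theta\rangle$, yields $\lVert\wh{m}-\B{E}(M)\rVert_\infty\le2\sup_{\xi,\theta}B(\xi,\theta)$.

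The main obstacle is purely computational: extracting the closed-form constant $C$ requires carefully tracking, through the union bound over $\Lambda$, the $\cosh$ term, the $\log\mu(\lambda)^{-1}$ term and the value of $\lambda_*$ when $\beta=\gamma=2\chi\ell$ — this is the exact analogue of the computation already carried out in the vector case, now with the two regularisation parameters $\beta,\gamma$ collapsed into the single scale $\chi$. Everything else — the symmetrisation in $\xi$, the domination $\langle\cdot\rangle_\pm^2\le\langle\cdot\rangle^2$, and the confidence-region/triangle-inequality passage to $\wh{m}$ — is routine and mirrors Propositions \ref{prop:2.3} and \ref{prop:3.2}.
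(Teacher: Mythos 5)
Your proposal is correct and follows the paper's own route: the two-sided bound from the preceding lemma, the domination $\langle\cdot\rangle_{\pm}^2\le\langle\cdot\rangle^2$ together with the symmetry of the moment bounds under $\xi\mapsto-\xi$, the substitution $\beta=\gamma=2\chi\ell$ and the $\cosh$-rewriting around $\lambda_*$ with the choice of $k_*$ on the grid $\Lambda$ to extract the constant $C$, and finally the standard nonempty-confidence-region plus triangle-inequality argument for $\wh{m}$ (which the paper leaves implicit, as it mirrors Propositions \ref{prop:2.3} and \ref{prop:3.2}). No gaps.
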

Remark that we can bound 
$\sup_{\xi \in \B{S}_p, \theta \in \B{S}_q} B(\xi, \theta)$
by the explicit expression for $B(\xi, \theta)$ where $v$, $t$ 
and $u$ are replaced by their upper bounds $v_*, t_*$, and $u_*$
with respect to $\xi \in \B{S}_p$ and $\theta \in \B{S}_q$.

Remark also that we can weaken the influence of $T$ by choosing 
$\chi > 1$, but that we can reach the optimal bound for 
$\lVert \wh{m} - \B{E}(M) \rVert_{\infty}$
only if we 
know an upper bound for the ratio $T / v_*$. 
Indeed, if we know $T/ v_*$ (or an upper bound of the same order 
of magnitude, up to a constant), we can choose 
\[ 
\chi = \max \Biggl\{ \frac{1}{\log(\delta^{-1})} \sqrt{ \frac{T}{v_*}} , 
1 \Biggr\}. 
\] 
In this case, with probability at least $1 - 2 \delta$, 
\[
\lVert \wh{m}  - \B{E}(M) \rVert_{\infty}  
\\ \leq \frac{8 C}{\sqrt{n}} \sqrt{ v_* \log(\delta^{-1})  
+ t_* + u_* + \sqrt{ v_* T}}.
\]
Most likely we do not know 
\[ 
\sqrt{\frac{T}{v_*}} = \sqrt{\frac{\B{E} \bigl( \lVert M \rVert_{\HS}^2 
\bigr)}{ \sup_{\xi \in \B{S}_p, \theta \in \B{S}_q} 
\B{E} \bigl( \langle \xi, M \theta \rangle^2 \bigr)}}, 
\] 
but we can still choose $\chi$ greater than 
one, to lower the influence of $T = \B{E} \bigl( \lVert M \rVert_{\HS}^2
\bigr)$ in the bound. 

\section{Adaptive Gram matrix estimate}\label{sec:6}

We devote a section to the adaptive estimation of a Gram
matrix, since it is an important subject for applications 
to principal component analysis and to least squares regression. We recall that, 
given a random vector $X \in \B{R}^d$, the Gram matrix of $X$ is defined as
\[
G = \B{E} \bigl( X X^{\top} \bigr) \in \B{R}^{d \times d}.
\]
The general approach of the previous section uses an estimator 
that cannot be computed explicitly without recourse to a Monte Carlo 
sampling algorithm. In the special case of the Gram matrix, 
we will produce an estimator that does not suffer from this 
drawback.

Consequences of what is proved in this section regarding 
robust principal component analysis can easily be drawn
from the method exposed in \cite{Giulini02}. We refer 
to this paper for further details. Consequences regarding 
least squares regression are discussed at the end of this
paper. 

In this section, we will use the asymmetric influence 
function defined by equation \myeq{eq:2}. 
The explicit computation of our estimator however will 
use the modified auxiliary function 
\[
\varphi_2(m,\sigma) = \mathds E \left[ \psi\left( (m+ \sigma W)^2 \right)\right], 
\qquad m \in \B{R}, \sigma \in \B{R}_+,
\]
where $W \sim \C{N}(0,1)$ is a standard 
Gaussian random variable.\\
Observe that it is possible to explicitly compute the function $\varphi_2$ 
in terms of the Gaussian distribution function $F(a) = \B{P}(W \leq a)$.
\begin{lemma}
For any $m \in \B{R}$ and $\sigma \in \B{R}_+$, 
\[
\varphi_2(m, \sigma) = 
m^2 + \sigma^2 - \frac{1}{2} \Bigl( m^4 + 6 m^2 \sigma^2 + 3 \sigma^4 \Bigr) 
+ r_2(m, \sigma), 
\]
where
\begin{multline*}
r_2(m, \sigma) = \frac{1}{2} \Bigl[ (m^2 - 1)^2 + (6m^2 - 2) \sigma^2 
+ 3 \sigma^4 \Bigr] \biggl[ F \biggl( \frac{-1-m}{\sigma} \biggr) 
+ F \biggl( \frac{-1 + m}{\sigma} \biggr) \biggr] \\ + 
\frac{\sigma}{2 \sqrt{2 \pi}} \Bigl[ \sigma^2(3 - 5m) - (1+m)(1-m)^2 \Bigr] 
\exp \biggl( - \frac{(1 + m)^2}{2 \sigma^2} \biggr) \\ 
+ \frac{\sigma}{2 \sqrt{2 \pi}} \Bigl[ \sigma^2(3 + 5m) 
- (1 -m) (1 + m)^2 \Bigr] \exp \biggl( - \frac{(1 -m)^2}{2 \sigma^2} \biggr). 
\end{multline*}
\end{lemma}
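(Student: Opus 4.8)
The plan is to compute $\varphi_2(m,\sigma) = \B{E}[\psi((m+\sigma W)^2)]$ by reducing everything to Gaussian moment integrals, exactly as was done for $\varphi$ in Lemma \ref{lem:3.2}. First I would use the piecewise description of $\psi$ from equation \myeq{eq:2}: for $t = (m+\sigma W)^2 \geq 0$,
\[
\psi(t) = \Bigl( t - \tfrac{t^2}{2} \Bigr) \B{1}(t \leq 1) + \tfrac{1}{2}\, \B{1}(t > 1)
= \Bigl( t - \tfrac{t^2}{2} - \tfrac{1}{2} \Bigr) \B{1}(t \leq 1) + \tfrac{1}{2}.
\]
The event $\{(m+\sigma W)^2 \leq 1\}$ is exactly $\{-1 \leq m + \sigma W \leq 1\}$, i.e. $\{\tfrac{-1-m}{\sigma} \leq W \leq \tfrac{1-m}{\sigma}\}$. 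So I need $\B{E}[W^k \B{1}(a \leq W \leq b)]$ for $k = 0,1,2,3,4$, expanding $t = (m+\sigma W)^2 = m^2 + 2m\sigma W + \sigma^2 W^2$ and $t^2 = (m+\sigma W)^4$ into monomials in $W$. The required truncated moments follow from integration by parts / the identity $\B{E}[W^k \B{1}(W \leq a)]$, using $\frac{d}{dw}\bigl(-\phi(w)\bigr) = w\phi(w)$ where $\phi$ is the standard normal density; these are the natural analogues of the four identities already recorded in the proof of Lemma \ref{lem:3.2}, now pushed up to order four.

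The second step is bookkeeping. The "main term" $m^2 + \sigma^2 - \tfrac12(m^4 + 6m^2\sigma^2 + 3\sigma^4)$ is precisely $\B{E}[t - \tfrac{t^2}{2}]$ over the whole line, using $\B{E}(W^2) = 1$, $\B{E}(W^4) = 3$ so that $\B{E}[(m+\sigma W)^2] = m^2 + \sigma^2$ and $\B{E}[(m+\sigma W)^4] = m^4 + 6m^2\sigma^2 + 3\sigma^4$. Adding back the constant $\tfrac12$ on the complementary event and subtracting the tail contributions, the correction $r_2(m,\sigma)$ collects (i) the term $\bigl(t - \tfrac{t^2}{2} - \tfrac12\bigr)$ integrated over the \emph{complement} $\{t > 1\}$, which splits into the two half-lines $W < \tfrac{-1-m}{\sigma}$ and $W > \tfrac{1-m}{\sigma}$, producing the $F\bigl(\tfrac{-1-m}{\sigma}\bigr) + F\bigl(\tfrac{-1+m}{\sigma}\bigr)$ factor on the polynomial-in-$(m,\sigma)$ part, and (ii) the density (exponential) terms coming from the $\B{E}[W^k\B{1}(\cdot)]$ with $k \geq 1$, evaluated at the two endpoints $\tfrac{\pm 1 - m}{\sigma}$, which after grouping by the common Gaussian factor $\exp\bigl(-\tfrac{(1+m)^2}{2\sigma^2}\bigr)$ resp. $\exp\bigl(-\tfrac{(1-m)^2}{2\sigma^2}\bigr)$ give the stated cubic-in-$(m,\sigma)$ coefficients. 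I would verify the polynomial coefficient $\tfrac12\bigl[(m^2-1)^2 + (6m^2-2)\sigma^2 + 3\sigma^4\bigr]$ by noting it equals $\B{E}\bigl[\tfrac12 - t + \tfrac{t^2}{2}\bigr]$ rewritten with $t$ shifted to vanish at $t=1$: indeed $\tfrac12 - t + \tfrac{t^2}{2} = \tfrac12(t-1)^2$, and $\tfrac12\B{E}[((m+\sigma W)^2 - 1)^2] = \tfrac12\B{E}[(m+\sigma W)^4 - 2(m+\sigma W)^2 + 1]$, which expands to exactly that expression once $\B{E}(W^2)=1$, $\B{E}(W^4)=3$ are inserted.

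The only real obstacle is the sign and collection of the endpoint density terms: the cross terms $2m\sigma W$ in $t$ and the various terms in $t^2 = (m+\sigma W)^4$ each contribute boundary values $\phi\bigl(\tfrac{\pm 1 - m}{\sigma}\bigr)$ times polynomials in the endpoint, and getting the antisymmetry in $m \mapsto -m$ right between the two exponentials, together with the factor $\tfrac{1}{2\sqrt{2\pi}}$, requires care — but it is a finite, mechanical check. I would therefore organize the computation as: (1) record the five truncated Gaussian moments $\B{E}[W^k\B{1}(a \leq W \leq b)]$; (2) substitute $a = \tfrac{-1-m}{\sigma}$, $b = \tfrac{1-m}{\sigma}$ and use $F(-x) = 1 - F(x)$ to bring both endpoints into the form $F\bigl(\tfrac{-1\mp m}{\sigma}\bigr)$; (3) expand $\B{E}\bigl[\bigl(t - \tfrac{t^2}{2} - \tfrac12\bigr)\B{1}(t \leq 1)\bigr]$ linearly in these moments; (4) add $\tfrac12$ and rearrange into main term plus $r_2$, grouping the two exponentials as above. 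No step is conceptually hard; the write-up is "a simple computation" in the same spirit as Lemma \ref{lem:3.2}.
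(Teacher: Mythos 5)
Your plan is correct and follows essentially the same route as the paper: decompose $\psi$ so that the correction is $\tfrac12\B{E}\bigl[(t-1)^2\,\B{1}(t>1)\bigr]$ with $t=(m+\sigma W)^2$, split $\{t>1\}$ into the two half-lines in $W$, and expand using the truncated Gaussian moment identities $\B{E}[W^k\B{1}(W\leq a)]$ up to $k=4$. Your sanity check that the coefficient of the $F$-sum equals $\tfrac12\B{E}[(t-1)^2]$ is sound (the $F$-parts of the truncated moments carry exactly the full moments $1,1,3$), and the remaining endpoint-density bookkeeping is the same mechanical computation the paper performs.
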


\noindent
\begin{proof}
The proof is based on the expression 
\[
\psi(t) = 
t - t^2 / 2 + \B{1}(t \geq 1) (1 - t)^2/2, \qquad t \in \B{R}_+, 
\]
and on the identities
\begin{align*}
& 
\B{E} \Bigl[ \B{1} \bigl( W \leq a \bigr) \Bigr] = F(a)
= 1 - \B{E} \Bigl[ \B{1} \bigl( W \geq a \bigr) \Bigr], 
\\
& 
\B{E} \Bigl[ W \B{1} \bigl( W \leq a \bigr) \Bigr] = -
\frac{1}{\sqrt{2\pi}} \exp \biggl( -\frac{a^2}{2} \biggr)  = 
- \B{E} \Bigl[ W \B{1} \bigl( W \geq a \bigr) \Bigr], 
\\
&  \B{E} \Bigl[  W^2 \B{1} \bigl( W \leq a \bigr) \Bigr] = 
- \frac{a}{\sqrt{2\pi}} \exp \biggl( - \frac{a^2}{2} \biggr)  + F(a)
= 1 - \B{E} \Bigl[ W^2 \B{1} \bigl( W \geq a \bigr) \Bigr], \\ 
& \B{E} \Bigl[ W^3 \B{1} \bigl( W \leq a \bigr) \Bigr] 
= - \frac{a^2+2}{\sqrt{2\pi}} \exp \biggl( - \frac{a^2}{2} \biggr) 
= - \B{E} \Bigl[ W^3 \B{1} \bigl( W \geq a \bigr) \Bigr],  
\\
& 
\B{E} \Bigl[ W^4 \B{1} \bigl( W \leq a \bigr) \Bigr] = - 
\frac{a^3+3a}{\sqrt{2\pi}} \exp \biggl( - \frac{a^2}{2} \biggr)  + 3 F(a)  
= 3 - \B{E} \Bigl[ W^4 \B{1} \bigl( W \geq a \bigr) \Bigr]. 
\end{align*}
Let us put $\ds G(t) = \frac{1}{\sqrt{2 \pi}} \exp \biggl( - 
\frac{t^2}{2} \biggr)$. 
\[
\B{E} \Bigl\{ \psi \bigl[ (m + \sigma W)^2 \bigr] \Bigr\} = 
\B{E} \Bigl[ \bigl( m + \sigma W \bigr)^2 - \frac{1}{2} \bigl( m + \sigma W 
\bigr)^4 \Bigr] + r_2(m, \sigma),
\] where 
\begin{multline*}
2 r_2(m,\sigma) = 
\B{E} \Biggl\{  \Bigl[ (m + \sigma W)^4 - 2 (m + \sigma W)^2  + 1 \Bigr]
\\ \shoveright{\times \biggl[ 
\B{1} \biggl( W \leq \frac{-1-m}{\sigma} \biggr) + \B{1} \biggl( W \geq 
\frac{1 - m}{\sigma} \biggr) \biggr] \Biggr\} }\\ 
= \B{E} \Biggl\{ 
\Bigl[  \bigl( m^2 - 1 \bigr)^2 + 4 m (m^2 - 1) \sigma W + \bigl( 6 
m^2 - 2 \bigr) \sigma^2 W^2 + 4 m \sigma^3 W^3 + \sigma^4 W^4 \Bigr] 
\\ \shoveright{ \times \biggl[ 
\B{1} \biggl( W \leq \frac{-1-m}{\sigma} \biggr) + \B{1} \biggl( W \geq 
\frac{1 - m}{\sigma} \biggr) \biggr] \Biggr\} }
\\ \shoveleft{ 
= \bigl( m^2 - 1 \bigr)^2 \biggl[ F \biggl(  \frac{- 1 - m}{\sigma} 
\biggr) + F \biggl( \frac{ - 1 + m}{\sigma} \biggr) \biggr] }\\ 
+ 4 m (m^2 - 1) \sigma \biggl[ - G \biggl( \frac{-1-m}{\sigma} \biggr) + 
G \biggl( \frac{1 - m}{\sigma} \biggr) \biggr] 
\\ + ( 6 m^2 - 2 ) \sigma^2 \biggl[ \frac{1+m}{\sigma} G \biggl( 
\frac{-1-m}{\sigma} \biggr) + \frac{1-m}{\sigma} G \biggl( \frac{1-m}{\sigma}
\biggr) \\\shoveright{+ F \biggl( \frac{-1-m}{\sigma} \biggr) 
+ F \biggl( \frac{-1 + m}{\sigma} \biggr) \biggr]} \\ 
+ 4 m \sigma^3 \biggl\{ - \biggl[ \biggl( \frac{1+m}{\sigma}   
\biggr)^2 + 2 \biggr] G \biggl( \frac{-1-m}{\sigma} \biggr) + 
\biggl[ \biggl( \frac{1-m}{\sigma} \biggr)^2 + 2 \biggr] G 
\biggl( \frac{-1 + m}{\sigma} \biggr) \biggr\}
\\ 
+ \sigma^4 \biggl\{ \biggl[ \biggl( \frac{1+m}{\sigma} \biggr)^3 + 
\frac{3(1+m)}{\sigma} \biggl] G \biggl( \frac{-1-m}{\sigma} \biggr) 
+ \biggl[ \biggl( \frac{1-m}{\sigma} \biggr)^3 +  
\frac{3(1-m)}{\sigma} \biggr] G \biggl( \frac{1-m}{\sigma} \biggr) 
\\ + 3 \biggl[ F \biggl( \frac{-1-m}{\sigma} \biggr) + F \biggl( 
\frac{-1 + m}{\sigma} \biggr) \biggr] \biggr\},
\end{multline*}
so that 
\begin{multline*}
r_2(m, \sigma) = \frac{1}{2} \Bigl[ (m^2-1)^2 + \bigl( 6 m^2 - 2 \bigr) \sigma^2 + 
3 \sigma^4 \Bigr] \biggl[ F \biggl( \frac{-1-m}{\sigma} \biggr) 
+ F \biggl( \frac{-1 + m}{\sigma} \biggr) \biggr] \\
+ \frac{1}{2} \sigma \Bigl[ \sigma^2 \bigl( 3 - 5 m \bigr) - (1 + m) (1 - m)^2 \Bigr] 
G \biggl( \frac{-1-m}{\sigma} \biggr) \\ + 
\frac{1}{2} \sigma \Bigl[ \sigma^2 \bigl( 3 + 5 m \bigr) - (1-m)(1+m)^2 \Bigr] G
\biggl( \frac{1 - m}{\sigma} \biggr). 
\end{multline*}
\end{proof}
Observe now that, when $\theta'$ is distributed according to  
$ \rho_{\theta} = \C{N}(\theta, \beta^{-1} I_d)$, the real valued 
random variable $\langle \theta', x \rangle$ 
is Gaussian with mean $\langle \theta, x \rangle$ 
and standard deviation $ \lVert x \rVert / \sqrt{\beta}$. 
Thus
we can state the following.
\begin{lemma}\label{lem3.1}
For any $\theta, x  \in \B{R}^d$, 
\[
\int \psi(\langle \theta', x \rangle^2 ) \, \mathrm d \rho_\theta(\theta') = 
\varphi_2 \left(\langle \theta, x \rangle,\frac{\lVert x \rVert}{\sqrt{\beta}}  \right). 
\]
\end{lemma}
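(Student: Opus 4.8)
The plan is to recognize that the $d$-dimensional Gaussian integral over $\rho_\theta$ collapses to the one-dimensional Gaussian expectation that \emph{defines} $\varphi_2$, so the lemma is a change-of-variables statement rather than a computation.

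First I would note that the map $\theta' \mapsto \langle \theta', x \rangle$ is linear, hence the image of $\rho_\theta = \C{N}(\theta, \beta^{-1} I_d)$ under it is again Gaussian: if $\theta' \sim \rho_\theta$ then $\langle \theta', x \rangle$ is real-valued Gaussian with mean $\langle \B{E}(\theta'), x \rangle = \langle \theta, x \rangle$ and variance $x^{\top} (\beta^{-1} I_d) \, x = \beta^{-1} \lVert x \rVert^2$, i.e.\ with standard deviation $\lVert x \rVert / \sqrt{\beta}$. This is the observation already recorded just before the statement.

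Then, writing $W \sim \C{N}(0,1)$ and using the equality in distribution $\langle \theta', x \rangle \overset{d}{=} \langle \theta, x \rangle + \beta^{-1/2} \lVert x \rVert \, W$, I would apply the deterministic map $t \mapsto \psi(t^2)$ to both sides and take expectations:
\[
\int \psi \bigl( \langle \theta', x \rangle^2 \bigr) \, \ud \rho_\theta(\theta')
= \B{E} \Biggl[ \psi \biggl( \Bigl( \langle \theta, x \rangle + \frac{\lVert x \rVert}{\sqrt{\beta}} \, W \Bigr)^{\!2} \biggr) \Biggr]
= \varphi_2 \Bigl( \langle \theta, x \rangle, \tfrac{\lVert x \rVert}{\sqrt{\beta}} \Bigr),
\]
the last step being nothing but the definition of $\varphi_2$. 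The degenerate case $x = 0$ causes no trouble, both sides reducing to $\psi(0) = 0$.

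There is no real obstacle here: the only point requiring a moment's care is that in this section $\psi$ denotes the asymmetric influence function of \eqref{eq:2}, which is defined on $\B{R}_+$, and the argument $\langle \theta', x \rangle^2$ is nonnegative, so $\psi$ is always evaluated on its domain. The substantive work — expressing $\varphi_2(m,\sigma)$ in closed form in terms of the standard Gaussian distribution function $F$ — has already been done in the preceding lemma; Lemma~\ref{lem3.1} merely records the identification needed to make the Gram-matrix estimator explicit.
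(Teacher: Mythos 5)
Your proof is correct and coincides with the paper's argument: the lemma is stated immediately after the observation that the push-forward of $\rho_\theta$ under $\theta' \mapsto \langle \theta', x\rangle$ is $\C{N}\bigl(\langle\theta,x\rangle, \lVert x\rVert^2/\beta\bigr)$, and the identity then follows directly from the definition of $\varphi_2$. Your additional remarks on the degenerate case $x=0$ and on $\psi$ being evaluated only on $\B{R}_+$ are fine but not needed.
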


Introduce 
\[
A_{\lambda,\beta}(\theta, x) = \varphi_2 \left( \lambda^{1/2} 
\langle \theta, x \rangle,  \frac{\lVert x \rVert}{\sqrt{\beta}} \right) 
- \log\left( 1+ \frac{\|x\|^2}{\beta}  \right),
\]
where $\lambda \in \B{R}_+$ is a constant modifying the norm of
$\theta$. 
Next proposition provides some upper and lower bounds. 
\begin{prop}
\label{prop3.4}
With probability at least $1-\delta$, for any $\theta \in \B{R}^d$, 
any $\lambda \in \B{R}_+$,  
\[
\frac{1}{n \lambda} \sum_{i=1}^n A_{\lambda,\beta}(\theta, X_i) 
- \frac{\beta \| 
\theta\|^2}{2n} - \frac{\log(\delta^{-1})}{n\lambda} 
\leq \mathds E\left( \langle \theta, X \rangle^2 \right) + \frac{\mathds E
\bigl( \|X\|^4 \bigr) }{\lambda \beta^2}. 
\]
Moreover with probability at least $1-\delta$, for any $\theta \in \B{R}^d$, 
any $\lambda \in \B{R}_+$,  
\begin{multline*}
\frac{1}{n \lambda} \sum_{i=1}^n A_{\lambda,\beta}(\theta, X_i) + \frac{\beta \| \theta\|^2}{2n} + \frac{\log(\delta^{-1})}{n\lambda} \\
\geq \mathds E\left( \langle \theta, X \rangle^2 \right) - \lambda \mathds E\left( \langle \theta, X \rangle^4 \right)
- \frac{6 \mathds E \left( \|X\|^2 \langle \theta, X \rangle^2\right)}{\beta}
 -  \frac{3 \mathds E \bigl( \lVert X \rVert^4 \bigr)}{\lambda \beta^2}. 
\end{multline*}
\end{prop}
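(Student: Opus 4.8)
The plan is to apply the PAC--Bayesian inequality of Proposition~\ref{prop:2.1} twice, with Gaussian prior $\rho_0=\C{N}(0,\beta^{-1}I_d)$ and Gaussian posterior $\rho_{\lambda^{1/2}\theta}=\C{N}(\lambda^{1/2}\theta,\beta^{-1}I_d)$. Under $\rho_{\lambda^{1/2}\theta}$ the variable $\langle\theta',x\rangle$ is normal with mean $\lambda^{1/2}\langle\theta,x\rangle$ and variance $\lVert x\rVert^2/\beta$, so Lemma~\ref{lem3.1} gives $\int\psi(\langle\theta',X_i\rangle^2)\,\ud\rho_{\lambda^{1/2}\theta}(\theta')=\varphi_2\bigl(\lambda^{1/2}\langle\theta,X_i\rangle,\lVert X_i\rVert/\sqrt{\beta}\bigr)$; hence $\frac{1}{n}\sum_{i=1}^n A_{\lambda,\beta}(\theta,X_i)=\frac{1}{n}\sum_{i=1}^n\int f(\theta',X_i)\,\ud\rho_{\lambda^{1/2}\theta}(\theta')$ with $f(\theta',x)=\psi(\langle\theta',x\rangle^2)-\log(1+\lVert x\rVert^2/\beta)$, while $\C{K}(\rho_{\lambda^{1/2}\theta},\rho_0)=\lambda\beta\lVert\theta\rVert^2/2$. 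Because Proposition~\ref{prop:2.1} holds simultaneously for all posteriors, the resulting inequalities will hold for every $\theta\in\B{R}^d$ and every $\lambda\in\B{R}_+$ at once, giving the asserted uniformity for free.

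For the first inequality I would apply Proposition~\ref{prop:2.1} to $f$. Since $\exp\bigl(\psi(t)\bigr)\le 1+t$ on $\B{R}_+$ by Lemma~\ref{lem:5.1}, one has $\exp\bigl(f(\theta',X)\bigr)\le(1+\langle\theta',X\rangle^2)/(1+\lVert X\rVert^2/\beta)$. Taking the expectation in $X$, then Jensen's inequality to move $\log$ outside the integral against $\rho_{\lambda^{1/2}\theta}$, then Fubini's lemma together with the identity $\int\langle\theta',X\rangle^2\,\ud\rho_{\lambda^{1/2}\theta}(\theta')=\lambda\langle\theta,X\rangle^2+\lVert X\rVert^2/\beta$, and finally $\log(1+z)\le z$ and $(1+\lVert X\rVert^2/\beta)^{-1}\le 1$, one gets $\int\log\B{E}\bigl[\exp(f)\bigr]\,\ud\rho_{\lambda^{1/2}\theta}\le\lambda\B{E}\bigl(\langle\theta,X\rangle^2\bigr)$. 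Dividing the PAC--Bayesian bound by $\lambda$ then gives $\frac{1}{n\lambda}\sum_{i=1}^n A_{\lambda,\beta}(\theta,X_i)\le\B{E}\bigl(\langle\theta,X\rangle^2\bigr)+\frac{\beta\lVert\theta\rVert^2}{2n}+\frac{\log(\delta^{-1})}{n\lambda}$, which implies the first claimed inequality since $\B{E}(\lVert X\rVert^4)/(\lambda\beta^2)\ge 0$.

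For the second inequality I would apply Proposition~\ref{prop:2.1} to $-f$, whose left-hand side is $-\frac{1}{n}\sum_{i=1}^n A_{\lambda,\beta}(\theta,X_i)$; bounding the entropy term by $\lambda\beta\lVert\theta\rVert^2/2$ turns this into a lower bound on $\frac{1}{n}\sum_i A_{\lambda,\beta}(\theta,X_i)$. Everything then comes down to majorising $\int\log\B{E}\bigl[(1+\lVert X\rVert^2/\beta)\exp(-\psi(\langle\theta',X\rangle^2))\bigr]\,\ud\rho_{\lambda^{1/2}\theta}$. The naive bound $\exp(-\psi(t))\le 1-t+t^2$ from Lemma~\ref{lem:5.1}, used on its own, is \emph{not} enough: after multiplying by $1+\lVert X\rVert^2/\beta$ it leaves cubic and quartic terms in $(\lVert X\rVert,\langle\theta,X\rangle)$ that the target right-hand side does not control. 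The remedy is to use it together with $\psi\ge 0$ on $\B{R}_+$, i.e.\ $\exp(-\psi(t))\le 1$. Writing $a=\lVert X\rVert^2/\beta$, $b=\langle\theta,X\rangle^2$ and $q=\lambda^2b^2+6\lambda ab+3a^2$, and using $\int\langle\theta',X\rangle^2\,\ud\rho_{\lambda^{1/2}\theta}=\lambda b+a$ and $\int\langle\theta',X\rangle^4\,\ud\rho_{\lambda^{1/2}\theta}=q$, Jensen and Fubini give
\[
\int\log\B{E}\bigl[(1+a)e^{-\psi(\langle\theta',X\rangle^2)}\bigr]\,\ud\rho_{\lambda^{1/2}\theta}\ \le\ \log\B{E}\Bigl[(1+a)\min\bigl\{1,\,1-\lambda b-a+q\bigr\}\Bigr].
\]
I would then establish the pointwise bound $(1+a)\min\{1,\,1-\lambda b-a+q\}-1\le -\lambda b+q$ by a case split on which term realises the minimum: if $1-\lambda b-a+q\le 1$ the left-hand side equals $-\lambda b+q+a(q-\lambda b-a)$, where $q-\lambda b-a\le 0$; otherwise it equals $a$, and the inequality $1-\lambda b-a+q>1$ says exactly that $a<q-\lambda b$. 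Taking the expectation in $X$ and applying $\log(1+z)\le z$ then bounds the displayed quantity by $-\lambda\B{E}(\langle\theta,X\rangle^2)+\B{E}(q)$; dividing by $\lambda$ and recalling that $\B{E}(q)=\lambda^2\B{E}(\langle\theta,X\rangle^4)+\frac{6\lambda}{\beta}\B{E}(\lVert X\rVert^2\langle\theta,X\rangle^2)+\frac{3}{\beta^2}\B{E}(\lVert X\rVert^4)$ gives exactly the second claimed bound.

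I expect the main obstacle to be precisely this last comparison: the polynomial majorant $1-t+t^2$ is very wasteful for large $t$, where $\exp(-\psi(t))=e^{-1/2}$, and it is the role of the correction term $-\log(1+\lVert X\rVert^2/\beta)$ in the definition of $A_{\lambda,\beta}$ to make the two-case argument close. The remaining ingredients --- the Gaussian second- and fourth-moment computations, Jensen, Fubini and $\log(1+z)\le z$ --- are routine.
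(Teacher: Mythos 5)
Your proposal is correct and follows essentially the same route as the paper: the PAC--Bayesian inequality of Proposition~\ref{prop:2.1} applied to $\pm f$ with posterior $\rho_{\lambda^{1/2}\theta}$ (entropy $\lambda\beta\lVert\theta\rVert^2/2$), the bounds of Lemma~\ref{lem:5.1} on $\exp(\pm\psi)$, and the Gaussian second- and fourth-moment identities. The only differences are cosmetic and both check out: for the upper bound you keep the exact ratio $(1+\lambda b+a)/(1+a)=1+\lambda b/(1+a)$ and so obtain a slightly tighter bound without the $\B{E}(\lVert X\rVert^4)/(\lambda\beta^2)$ term, whereas the paper uses $\log\frac{1+t}{1+u}\le\log(1+t-u+u^2)$; and for the lower bound you perform the case split on the $\rho$-averaged quantities via $\int\min\{1,g\}\,\ud\rho\le\min\{1,\int g\,\ud\rho\}$, whereas the paper resolves the same difficulty (the uncontrolled $t^2y$ cross term you correctly identify) by the pointwise inequality $(1+y)e^{-\psi(t)}\le 1-t+t^2+y$, split on $t\le 1$ versus $t\ge 1$ before averaging.
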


\vskip2mm
\noindent
\begin{proof}
According to Proposition \vref{prop:2.1}, with probability at least 
$1 - \delta$, for any $\theta \in \B{R}^d$ and any $\lambda \in \B{R}_+$,  
\begin{multline*}
\frac{1}{n \lambda} \sum_{i=1}^{n} 
\Biggl[ \int \psi \bigl( \langle \theta', X_i 
\rangle^2 \bigr) \, \ud \rho_{\lambda^{1/2} \theta}(\theta') - 
\log \biggl( 1 + \frac{\lVert X_i \rVert^2}{\beta} \biggr) \Biggr] - 
\frac{\beta \lVert \theta \rVert^2}{2 n}  + \frac{\log(\delta^{-1})}{n \lambda} \\ 
\leq \frac{1}{\lambda} \int \log \Biggl\{ \B{E} \Biggl[ \exp \Biggl( 
\psi \bigl( \langle \theta', X \rangle^2 \bigr) 
- \log \biggl(1 + \frac{\lVert X \rVert^2}{\beta} \biggr) \Biggr) \Biggr] 
\Biggr\} 
\, \ud \rho_{\lambda^{1/2} \theta}(\theta'). 
\end{multline*} 
According to Lemma \vref{lem:5.1}, 
\begin{multline*}
\psi (t) - \log(1+u) \leq \log\left( \frac{1+t}{1+u}\right)  
\\ = \log\left( 1- u + \frac{t+u^2}{1+u}\right) \leq \log(1+t-u+u^2), \qquad t, u \in \B{R}_+.
\end{multline*}
Thus the right-hand side of the previous inequality is not greater than 
\[
\frac{1}{\lambda} \int \B{E} \bigl( \langle \theta', X \rangle^2 \bigr) \, \ud 
\rho_{\lambda^{1/2} \theta} (\theta') 
- \frac{\B{E} \bigl( \lVert X \rVert^2 \bigr)}{\lambda \beta } + 
\frac{\B{E} \bigl( \lVert X \rVert^4 \bigr)}{\lambda \beta^2 } =  \B{E} \bigl( \langle \theta, X \rangle^2 \bigr) + \frac{
\B{E} \bigl( \lVert X \rVert^4 \bigr)}{\lambda \beta^2 }.
\]
 In the same time, 
due to Lemma \ref{lem3.1}, its left-hand side is equal to 
\[
\frac{1}{n \lambda} \sum_{i=1}^n A_{\lambda, \beta}(\theta, X_i) 
- \frac{\beta \lVert \theta \rVert^2}{2 n} - \frac{\log(\delta^{-1})}{n \lambda
}.
\] 
This achieves the proof for the upper bound. 
Let us now come to the lower bound.\\
As a consequence of Lemma \vref{lem:5.1}, 
for any $t \in [0, 1]$ and any $y \in \B{R}_+$, 
\begin{align*}
- \psi(t) + \log (1 + y) & \leq \log \bigl( 1 - t 
 + t^2 \bigr) + \log( 1 + y )  \\ 
& = \log \Bigl( 1 - t + t^2 + \bigl( 1 - t + t^2 \bigr) y \Bigr)
\\ & \leq \log \bigl( 1 - t + t^2 + y \bigr).
\end{align*}
When $t \in [1, \infty[$ the same inequality is also obviously true:
\[ 
- \psi(t) + \log(1 + y) \leq \log( 1 + y) \leq \log (1 - t + t^2 + y). 
\] 
As a consequence, for any $x \in \B{R}^d$,  
\begin{multline*}
\int -\psi(\langle \theta', x \rangle^2 ) \, \mathrm d \rho_\theta(\theta') +  \log\left( 1+ \frac{\|x\|^2}{\beta}  \right) \\
\leq \int \log \left( 1-  \langle \theta', x \rangle^2+  \langle \theta', x \rangle^4 +\frac{\|x\|^2}{\beta}  \right) \, \mathrm d \rho_\theta(\theta')
\end{multline*}
Thus, according to the PAC-Bayesian inequality stated in Proposition 
\vref{prop:2.1}, with probability al least $1-\delta$, for any 
$\theta \in \B{R}^d$ and any $\lambda \in \B{R}_+$, 
\begin{multline*}
\frac{1}{n \lambda} \sum_{i=1}^n \left[\int- \psi(\langle \theta', X_i \rangle^2 ) \, \mathrm d \rho_{\lambda^{1/2} \theta}(\theta') + \log\left( 1+ \frac{\|X_i\|^2}{\beta}  \right) \right] 
-  \frac{\beta\|\theta\|^2}{2n  } -  \frac{\log(\delta^{-1})}{n \lambda}\\
\leq \frac{1}{\lambda} \int \log \Biggl\{ \B{E} \Biggl[ \exp \Biggl( 
- \psi \bigl( \langle \theta', X \rangle^2 \bigr) + \log \biggl( 
1 + \frac{\lVert X \rVert^2}{\beta} \biggr) \Biggr) \Biggr] \Biggr\} \, 
\ud \rho_{\lambda^{1/2} \theta} \bigl( \theta' \bigr)
\\ \leq \frac{1}{\lambda} \int \mathds E \left(- \langle \theta', X \rangle^2 +  
\langle \theta', X \rangle^4 +\frac{\| X \|^2}{\beta}   \right) \, \mathrm d \rho_{\lambda^{1/2} \theta} (\theta').
\end{multline*}
To conclude the proof, it is enough to use the explicit expression of 
the moments of a Gaussian random variable, remembering 
that, when $\theta'$ is distributed according to $\rho_{\lambda^{1/2} \theta}$, 
the distribution of $\langle \theta', X \rangle$
is equal to  $\C{N} 
\Bigl(\lambda^{1/2} \langle \theta, X \rangle, \lVert X \rVert^2/\beta \Bigr)$.  
\end{proof}

\vskip2mm
The next proposition defines an estimator of the quadratic 
form $\B{E} \bigl( \langle \theta, X \rangle^2 \bigr)$.
Note that, since we introduced a parameter $\lambda$ that takes care of the 
norm of $\theta$, we will assume in the following 
without loss of generality that $\theta \in  \mathds S_d$, 
the unit sphere of $\mathds R^d$.

\begin{prop}
\label{prop:3.6}
Let us assume that 
\[
\mathds E \bigl( \lVert X \rVert^4 \bigr) \leq T < \infty,
\]
for a known constant $T$.
For any $\theta \in \mathds S_d$, consider the estimator
of $\B{E} \bigl( \langle \theta, X \rangle^2 \bigr)$ defined as 
\[
\mathcal E(\theta) 
= \sup_{\lambda \in \B{R}^d} \left\{ \frac{1}{n \lambda} 
\sum_{i=1}^n A_{\lambda,\beta}(\theta, X_i)
 - \frac{\beta}{2n} - \frac{\log(\delta^{-1})}{n\lambda} - \frac{T}{\lambda \beta^2}
\right\}.
\]
With probability at least $1-\delta$, for any $\theta \in \mathds S_d$,
\[
\mathcal E(\theta) \leq \mathds E\left( \langle \theta, X \rangle^2 \right).
\]
Moreover, with probability at least $1-\delta$, for any $\theta \in \mathds S_d$,
\begin{multline*}
 \mathds E\left( \langle \theta, X \rangle^2 \right) 
 \leq \mathcal E(\theta) + 2\sqrt{ 2   \mathds E\left( \langle \theta, X \rangle^4 \right) \left( \frac{2T}{\beta^{2}} + \frac{ \log(\delta^{-1})}{n} \right)}\\
+  \frac{6 \mathds E \left( \|X\|^2 \langle \theta, X \rangle^2\right)}{\beta} + \frac{\beta }{n}. 
\end{multline*}
\end{prop}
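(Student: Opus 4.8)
The plan is to read off both inequalities from Proposition~\ref{prop3.4}, specialized to unit vectors $\theta \in \mathds S_d$ (for which $\lVert \theta \rVert^2 = 1$) and combined with the hypothesis $\B{E}(\lVert X \rVert^4) \leq T$. The two assertions are established on two different events, namely the good event of the upper bound in Proposition~\ref{prop3.4} and that of the lower bound, each of probability at least $1 - \delta$, so no union bound is needed; each event is uniform in $\theta$, so the bounds hold for all $\theta \in \mathds S_d$ at once.

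For the first (upper) bound, I would work on the event where the first inequality of Proposition~\ref{prop3.4} holds. There, for every $\lambda \in \B{R}_+$ and every $\theta \in \mathds S_d$, the bracketed expression entering the supremum in the definition of $\mathcal E(\theta)$ satisfies
\begin{multline*}
\frac{1}{n\lambda} \sum_{i=1}^n A_{\lambda,\beta}(\theta, X_i) - \frac{\beta}{2n} - \frac{\log(\delta^{-1})}{n\lambda} - \frac{T}{\lambda \beta^2} \\ \leq \B{E}\bigl( \langle \theta, X \rangle^2 \bigr) + \frac{\B{E}(\lVert X \rVert^4) - T}{\lambda \beta^2} \leq \B{E}\bigl( \langle \theta, X \rangle^2 \bigr),
\end{multline*}
the last step being $\B{E}(\lVert X \rVert^4) \leq T$. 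Taking the supremum over $\lambda$ gives $\mathcal E(\theta) \leq \B{E}(\langle \theta, X \rangle^2)$.

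For the second (lower) bound, I would work on the event where the second inequality of Proposition~\ref{prop3.4} holds, fix $\theta \in \mathds S_d$ and $\lambda \in \B{R}_+$, and note that the same bracketed expression, now taken at this particular $\lambda$, is a lower bound for $\mathcal E(\theta)$. Chaining this with the lower bound of Proposition~\ref{prop3.4} (using $\lVert \theta \rVert^2 = 1$ and $3 \B{E}(\lVert X \rVert^4) \leq 3T$) and rearranging yields
\begin{multline*}
\B{E}\bigl( \langle \theta, X \rangle^2 \bigr) \leq \mathcal E(\theta) + \lambda \B{E}\bigl( \langle \theta, X \rangle^4 \bigr) + \frac{4T}{\lambda \beta^2} + \frac{2\log(\delta^{-1})}{n\lambda} \\ + \frac{6 \B{E}\bigl( \lVert X \rVert^2 \langle \theta, X \rangle^2 \bigr)}{\beta} + \frac{\beta}{n},
\end{multline*}
where the coefficient $4T$ is the sum of the $3T$ appearing in Proposition~\ref{prop3.4} and the $T$ subtracted in the definition of $\mathcal E(\theta)$. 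This holds for every $\lambda \in \B{R}_+$, and the only $\lambda$-dependent terms are $\lambda \B{E}(\langle \theta, X \rangle^4)$ and $\frac{1}{\lambda}\bigl( \frac{4T}{\beta^2} + \frac{2\log(\delta^{-1})}{n} \bigr) = \frac{2}{\lambda}\bigl( \frac{2T}{\beta^2} + \frac{\log(\delta^{-1})}{n} \bigr)$; passing to the infimum over $\lambda$ and using $\inf_{\lambda > 0}(a\lambda + c/\lambda) = 2\sqrt{ac}$ with $a = \B{E}(\langle \theta, X \rangle^4)$ and $c = 2\bigl( \frac{2T}{\beta^2} + \frac{\log(\delta^{-1})}{n} \bigr)$ collapses them to $2\sqrt{2\,\B{E}(\langle \theta, X \rangle^4)\bigl( \frac{2T}{\beta^2} + \frac{\log(\delta^{-1})}{n} \bigr)}$, which is exactly the correction term in the statement.

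I do not anticipate a real obstacle: the argument is a two-line specialization of Proposition~\ref{prop3.4} followed by an elementary optimization in $\lambda$. The only care needed is in the constant bookkeeping --- in particular the rewriting $4T/\beta^2 = 2 \cdot (2T/\beta^2)$ that moves a factor $2$ inside the square root so as to match the stated form --- and in the observation that, although the optimizing $\lambda$ depends on the unknown fourth moment $\B{E}(\langle \theta, X \rangle^4)$, this is harmless because $\mathcal E(\theta)$ already incorporates the supremum over all $\lambda \in \B{R}_+$, so the lower bound is automatically valid at the optimal $\lambda$.
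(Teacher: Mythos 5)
Your proposal is correct and follows essentially the same route as the paper: specialize Proposition \ref{prop3.4} to $\lVert \theta \rVert = 1$, absorb $\B{E}(\lVert X \rVert^4) \leq T$ into the $-T/(\lambda\beta^2)$ term of the estimator for the upper bound, and for the lower bound combine the $3T$ from Proposition \ref{prop3.4} with the $T$ from the definition of $\mathcal E$ to get $4T/(\lambda\beta^2)$ before optimizing $\lambda a + c/\lambda$ over $\lambda$. Your bookkeeping of the constants and your observation that the uniformity in $\lambda$ makes the data-dependent optimal choice legitimate both match the paper's (much terser) argument.
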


\begin{rmk}
\label{rem:3.1}
Introducing $\ds \alpha = \frac{2 T n}{\beta^{2}}$, 
we can also express the previous 
bound as 
\begin{multline*}
\B{E} \bigl( \langle \theta, X \rangle^2 \bigr) \leq \C{E}(\theta) 
+ 2 \sqrt{ \frac{2}{n} \B{E} \bigl( \langle \theta, X \rangle^4 \bigr) 
\bigl[ \alpha + \log ( \delta^{-1} ) \bigr]} \\ 
\shoveright{+ 
3 \sqrt{\frac{2 \alpha}{Tn}} \B{E} \Bigl( \langle \theta, 
X \rangle^2 \lVert X \rVert^2 \Bigr) + 
\sqrt{ \frac{2 T}{ \alpha n }}} \\ 
\leq \C{E} ( \theta ) + \Biggl[ \, 2 \sqrt{2 \B{E} \bigl( \langle \theta, X
\rangle^4 \bigr)} + 3 \sqrt{\frac{2}{T}} \B{E} \Bigl( \langle \theta, X 
\rangle^2 \lVert X \rVert^2 \Bigr) \, \Biggr] \sqrt{\frac{\alpha}{n}} 
\\ \shoveright{ + \sqrt{\frac{2 T}{n \alpha}} + 2 \sqrt{\frac{2}{n} 
\B{E} \bigl( \langle 
\theta, X \rangle^4 \bigr) \log(\delta^{-1})}} \\ 
\leq \C{E}(\theta) + 5 \sqrt{ \frac{2 \alpha}{n} \B{E} \bigl( \langle \theta, X \rangle^4 \bigr)}
+ \sqrt{\frac{2 T}{n \alpha}} + 2 \sqrt{ \frac{2}{n} \B{E} 
\bigl( \langle \theta, X \rangle^4 \bigr) \log ( \delta^{-1})},
\end{multline*}
where the last inequality 
is a consequence of 
the Cauchy-Schwarz inequality
\[
\mathds E \left( \langle \theta, X \rangle^2 \|X\|^2 
\right) \leq \sqrt{ 
\mathds E \left( \langle \theta, X \rangle^4 \right) 
\mathds E(\|X\|^4) } \leq \sqrt{ T \ \mathds E\left( \langle \theta, X \rangle^4\right)}. 
\]
\end{rmk}
\begin{proof}
Proposition \ref{prop:3.6} follows from Proposition \ref{prop3.4} and the definition of the estimator $\mathcal E$. 
To get the second inequality, observe that the 
value of $\lambda$ minimizing 
\[
\lambda \mathds E\left( \langle \theta, X \rangle^4 \right) 
+ \frac{6 \mathds E \left( \|X\|^2 \langle \theta, X 
\rangle^2\right)}{\beta} + \frac{4 T }{\lambda 
\beta^2}+ \frac{\beta }{n} + \frac{2 \log(\delta^{-1})}{n\lambda}
\]
is given by 
\[
\lambda = \sqrt{ 2   \mathds E \bigl(  \langle \theta, X \rangle^4 \bigr)^{-1} 
\left( \frac{2T}{\beta^2} + \frac{ \log(\delta^{-1})}{n} \right)}. 
\]
\end{proof}
In the following proposition, 
we make the estimator 
adaptive in $\alpha$ 
as well as in $\lambda$ 
and we introduce our estimator $\widehat G$ of the Gram matrix $G$. 
\begin{prop}
\label{prop:6.5}
Let us assume that $\B{E} \bigl( \lVert X \rVert^4 \bigr) \leq T < 
\infty$, where $T$ is a known constant. Consider the estimator 
\begin{multline*}
\wt{\C{E}} ( \theta ) = \sup_{\lambda \in \B{R}_+} \sup_{k \in \B{N}} 
\frac{1}{n \lambda} \sum_{i=1}^n \Biggl[ \varphi_2 \Biggl( 
\sqrt{\lambda} \langle \theta, X_i \rangle, 
\Biggl( \frac{\exp(k)}{10 T n} \Biggr)^{1/4} \lVert X_i \rVert \, \Biggr) 
\\ - \log \Biggl( 1 +  \sqrt{ \frac{\exp(k)}{ 10 T n}} \lVert 
X_i \rVert^2 \, \Biggr) \Biggr]
- \frac{1}{n} \sqrt{\frac{5 T}{2 \exp(k)}} - \frac{\exp(k)}{10 n \lambda} \\ - 
\frac{\log \bigl[ (k+1)(k+2) / \delta \bigr]}{ n \lambda}, \qquad \theta \in 
\B{S}_d.
\end{multline*}
With probability at least $1 - \delta$, for any $\theta \in \B{S}_d$, 
\[ 
\wt{\C{E}} ( \theta ) \leq \B{E} \bigl( \langle \theta, X \rangle^2 \bigr).
\] 
With probability at least $1 - \delta$, for any $\theta \in \B{S}_d$, 
\[
\B{E} \bigl( \langle \theta, X \rangle^2 \bigr) \leq \wt{\C{E}} ( \theta )
+ B(\theta), 
\]
where
\begin{multline*}
B(\theta) =  
2 \sqrt{\frac{\B{E} \bigl( \langle \theta, X \rangle^4 \bigr)}{n}} 
\; \left\{\rule{0pt}{25pt}\right.
3.3 \biggl( \frac{T}{\B{E}\bigl( \langle \theta, X \rangle^4 \bigr)} \biggr)^{1/4} 
\\ 
+ \sqrt{ 4 \log \Biggl( \, \frac{1}{2} \log \biggl( \frac{T}{\B{E} 
\bigl( \langle \theta, X \rangle^4 \bigr)} \biggr) + \frac{5}{2} 
\; \Biggr) + 2 \log (\delta^{-1})} 
\; \left.\rule{0pt}{25pt}\right\}.
\end{multline*}
Consider an estimator $\wh{G} \in \B{R}^{d \times d}$ such that 
\[ 
0 \leq \inf_{\theta \in \B{S}_d} \langle \theta, \wh{G} \, \theta \rangle
- \wt{\C{E}}(\theta) 
\] 
and 
\begin{multline*}
\sup_{\theta \in \B{S}_d} \langle \theta , \wh{G} \, \theta 
\rangle - \wt{\C{E}}(\theta) = \inf \biggl\{ 
\; \sup_{\theta \in \B{S}_d} 
\langle \theta, M \, \theta \rangle - \wt{\C{E}}(\theta) \, : \, 
M \in \B{R}^{d \times d}, \\ M = M^{\top}, \; 0 \leq \inf_{\theta \in \B{S}_d}
\langle \theta, M \, \theta \rangle - \wt{\C{E}}(\theta) \biggr\}.
\end{multline*}
With probability at least $1 - 2 \delta$, 
\[ 
\bigl\lVert \wh{G} - G \bigr\rVert_{\infty} 
\leq \sup_{\theta \in \B{S}_d} B(\theta).  
\] 
\end{prop}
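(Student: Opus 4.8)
The plan is to carry out the argument of Proposition~\ref{prop:3.6} simultaneously along a geometric grid of the tuning parameter $\beta$ (equivalently of $\alpha=2Tn/\beta^{2}$), glue the branches together with a weighted union bound, and then deduce the Gram matrix statement from the resulting two-sided control exactly as $\wh m$ was obtained from $\C{E}$ in the earlier sections; throughout we use that $\langle\theta,G\theta\rangle=\B{E}(\langle\theta,X\rangle^{2})$ for $\theta\in\B{S}_{d}$.

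\textbf{Grid and validity events.} For $k\in\B{N}$ set $\beta_{k}=\bigl(10Tn/\exp(k)\bigr)^{1/2}$ and $\mu(k)=\bigl[(k+1)(k+2)\bigr]^{-1}$, so that $\sum_{k\ge0}\mu(k)=1$. By Lemma~\ref{lem3.1} and the definition of $A_{\lambda,\beta}$, the $k$-th branch of the double supremum defining $\wt{\C{E}}(\theta)$ is precisely the expression inside the $\sup_{\lambda>0}$ of Proposition~\ref{prop:3.6} evaluated at $\beta=\beta_{k}$ and with $\delta$ replaced by $\mu(k)\delta$. Applying the upper bound of Proposition~\ref{prop3.4} at confidence $1-\mu(k)\delta$ for each $k$ and intersecting over $k$ gives, with probability at least $1-\delta$, that this $k$-th branch is $\le\B{E}(\langle\theta,X\rangle^{2})$ for all $\theta\in\B{S}_{d}$, all $\lambda>0$ and all $k$; taking the supremum over $\lambda$ and $k$ yields $\wt{\C{E}}(\theta)\le\B{E}(\langle\theta,X\rangle^{2})=\langle\theta,G\theta\rangle$, the first assertion. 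Applying instead the second assertion of Proposition~\ref{prop:3.6} at confidence $1-\mu(k)\delta$ for each $k$ and intersecting gives an event of probability at least $1-\delta$ on which, for every $k$ and $\theta\in\B{S}_{d}$ (bounding $\wt{\C{E}}(\theta)$ from below by its $k$-th branch, $\lambda$ already optimised),
\begin{multline*}
\B{E}\bigl(\langle\theta,X\rangle^{2}\bigr)\le\wt{\C{E}}(\theta)+\frac{\beta_{k}}{n}+\frac{6\,\B{E}\bigl(\lVert X\rVert^{2}\langle\theta,X\rangle^{2}\bigr)}{\beta_{k}}\\
+2\sqrt{2\,\B{E}\bigl(\langle\theta,X\rangle^{4}\bigr)\Bigl(\tfrac{2T}{\beta_{k}^{2}}+\tfrac{\log(\mu(k)^{-1}\delta^{-1})}{n}\Bigr)}.
\end{multline*}

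\textbf{Optimising over the grid.} Put $\alpha_{k}=2Tn/\beta_{k}^{2}=\exp(k)/5$. Using $\B{E}(\lVert X\rVert^{2}\langle\theta,X\rangle^{2})\le\sqrt{T\,\B{E}(\langle\theta,X\rangle^{4})}$ (Cauchy--Schwarz and $\B{E}\lVert X\rVert^{4}\le T$) together with $\sqrt{a+b}\le\sqrt a+\sqrt b$, the previous display becomes, for every $k$,
\[
\B{E}\bigl(\langle\theta,X\rangle^{2}\bigr)\le\wt{\C{E}}(\theta)+g(\alpha_{k})+2\sqrt{\tfrac{2}{n}\B{E}\bigl(\langle\theta,X\rangle^{4}\bigr)\log\bigl(\delta^{-1}(k+1)(k+2)\bigr)},
\]
where $g(\alpha)=\sqrt{2T/(n\alpha)}+5\sqrt{2\alpha\,\B{E}(\langle\theta,X\rangle^{4})/n}$. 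Writing $c=\bigl(10\,\sqrt{T\,\B{E}(\langle\theta,X\rangle^{4})}/n\bigr)^{1/2}$ and $\alpha_{\star}=\tfrac15\sqrt{T/\B{E}(\langle\theta,X\rangle^{4})}$, one checks $g(\alpha)=2c\cosh\!\bigl(\tfrac12\log(\alpha/\alpha_{\star})\bigr)$, and $\alpha_{\star}\ge\tfrac15=\alpha_{0}$ precisely because $\B{E}(\langle\theta,X\rangle^{4})\le\B{E}\lVert X\rVert^{4}\le T$; hence one may take $k_{\star}\in\B{N}$ to be the nearest integer to $\log(5\alpha_{\star})=\tfrac12\log\bigl(T/\B{E}(\langle\theta,X\rangle^{4})\bigr)\ge0$, for which $\lvert\log(\alpha_{k_{\star}}/\alpha_{\star})\rvert\le\tfrac12$. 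Taking $k=k_{\star}$ gives $g(\alpha_{k_{\star}})\le2c\cosh(1/4)$ and $\log\bigl((k_{\star}+1)(k_{\star}+2)\bigr)\le2\log\bigl(\tfrac12\log(T/\B{E}(\langle\theta,X\rangle^{4}))+\tfrac52\bigr)$, and the numerical inequality $2\sqrt{10}\,\cosh(1/4)\le6.6$ shows the right-hand side is at most $\wt{\C{E}}(\theta)+B(\theta)$. Thus $\langle\theta,G\theta\rangle\le\wt{\C{E}}(\theta)+B(\theta)$ for all $\theta\in\B{S}_{d}$, the second assertion.

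\textbf{From $\wt{\C{E}}$ to $\wh G$.} On the intersection of the two events above (probability at least $1-2\delta$) we have $\wt{\C{E}}(\theta)\le\langle\theta,G\theta\rangle\le\wt{\C{E}}(\theta)+B(\theta)$ for all $\theta\in\B{S}_{d}$. Then $G$ is symmetric and satisfies $\langle\theta,G\theta\rangle\ge\wt{\C{E}}(\theta)$, so it is admissible in the minimisation defining $\wh G$; therefore $\wh G$ exists and $\sup_{\theta}\{\langle\theta,\wh G\theta\rangle-\wt{\C{E}}(\theta)\}\le\sup_{\theta}\{\langle\theta,G\theta\rangle-\wt{\C{E}}(\theta)\}\le\sup_{\theta}B(\theta)$. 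For every $\theta\in\B{S}_{d}$, combining $\langle\theta,\wh G\theta\rangle\ge\wt{\C{E}}(\theta)$ and $\langle\theta,G\theta\rangle\le\wt{\C{E}}(\theta)+B(\theta)$ with $\langle\theta,G\theta\rangle\ge\wt{\C{E}}(\theta)$,
\[
-\sup_{\theta}B(\theta)\le\wt{\C{E}}(\theta)-\langle\theta,G\theta\rangle\le\langle\theta,(\wh G-G)\theta\rangle\le\langle\theta,\wh G\theta\rangle-\wt{\C{E}}(\theta)\le\sup_{\theta}B(\theta),
\]
so $\lvert\langle\theta,(\wh G-G)\theta\rangle\rvert\le\sup_{\theta\in\B{S}_{d}}B(\theta)$, and as $\wh G-G$ is symmetric, $\lVert\wh G-G\rVert_{\infty}=\sup_{\theta\in\B{S}_{d}}\lvert\langle\theta,(\wh G-G)\theta\rangle\rvert\le\sup_{\theta\in\B{S}_{d}}B(\theta)$. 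The routine parts are the weighted union bound and this final sandwich; the real work sits in the optimisation step — identifying the $\cosh$ profile of $g$, checking that the ideal grid value $\alpha_{\star}$ does not drop below the smallest grid point $\alpha_{0}$ (which is exactly where $\B{E}(\langle\theta,X\rangle^{4})\le T$ is used), and tracking the constants so that the discretisation loss and the $\log\log$ term fit inside the stated $B(\theta)$.
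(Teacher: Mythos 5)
Your proof is correct and follows essentially the same route as the paper: a weighted union bound over the geometric grid $\alpha_k=\exp(k)/5$ applied to the bound of Proposition \ref{prop:3.6} (in the form of Remark \ref{rem:3.1}), recognition of the $\cosh$ profile with minimizer $\alpha_\star=\frac15\sqrt{T/\B{E}(\langle\theta,X\rangle^4)}\ge\frac15$, the choice of $k_\star$ nearest to $\log(5\alpha_\star)$, and the numerical bound $\sqrt{10}\cosh(1/4)\le3.3$. The only difference is that you also write out the final sandwich argument producing $\wh{G}$ from the two-sided control of $\wt{\C{E}}$, which the paper leaves implicit.
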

\begin{rmk}
It is interesting to rephrase this result in terms of the directional 
kurtosis 
\[ 
\kappa(\theta) = 
\begin{cases} 
\ds \frac{\B{E} \bigl( \langle \theta, X \rangle^4 
\bigr)}{ \B{E} \bigl( \langle \theta, X \rangle^2 \bigr)^2}, & 
\B{E} \bigl( \langle \theta, X \rangle^2 \bigr) > 0, \\ 
1, & \text{ otherwise.}
\end{cases} 
\] 
We obtain with probability at least $1 - 2 \delta$, 
\begin{multline*}
1 - 2 \sqrt{\frac{\kappa(\theta)}{n}} 
\left\{\rule{0pt}{25pt}\right.
3.3 \Biggl( \, \frac{T}{\kappa(\theta) \B{E}\bigl( \langle \theta, X \rangle^2 \bigr)^2} \, \Biggr)^{1/4} 
\\ 
+ \sqrt{ 4 \log \Biggl( \, \frac{1}{2} \log \Biggl( \frac{T}{
\kappa(\theta) \B{E} 
\bigl( \langle \theta, X \rangle^2 \bigr)^2} \biggr) + \frac{5}{2} 
\biggr) + 2 \log (\delta^{-1})} 
\; \left.\rule{0pt}{25pt}\right\}
\leq \frac{\wt{\C{E}}(\theta)}{\B{E} \bigl( \langle \theta, X \rangle^2 \bigr)
} \leq 1, 
\end{multline*}
with the appropriate convention that $r / 0  = + \infty$ when $r > 0$ and 
$0/0 = 1$. 
This inequality shows under which circumstances it is possible 
to estimate the order of magnitude of 
$\B{E}(\langle \theta, X \rangle^2 \bigr)$ and consequently 
the eigenvalues of the Gram matrix $G$. Indeed, introducing 
$\kappa_* = \sup_{\theta \in \B{S}_d} \kappa(\theta)$, we deduce 
with probability at least $1 - 2 \delta$ a bound 
of the form 
\[ 
1 - \sqrt{\frac{f \Bigl( \kappa_*, 
\B{E} \bigl( \langle \theta, X \rangle^2 \bigr)
 \Bigr)}{n}} \leq \frac{\wt{\C{E}}(\theta)}{\B{E} \bigl( \langle \theta, 
X \rangle^2 \bigr)} \leq 1, 
\]  
where the function $\sigma \mapsto F(\kappa_*, \sigma) = \sigma \Bigl( 1 - 
\sqrt{f(\kappa_*, 
\sigma)/n} \Bigr)$ is non-decreasing. 
Let us write $G = \B{E} \bigl( X X^{\top} \bigr)$ as 
\[ 
G = \sum_{i=1}^d \sigma_i  e_i e_i^{\top},
\] 
where 
$(e_1, \dots, e_d)$ is an orthonormal basis of eigenvectors 
and where
$\sigma_1 \geq \sigma_2 \geq \cdots \geq \sigma_d$ are 
the eigenvalues of $G$ counted with their multiplicities and sorted in 
decreasing order.
Introducing $\C{L}_i$, the set of all linear subspaces of $\B{R}^d$ 
of dimension $i$, it is well known that 
\[ 
\sigma_i = \sup \Bigl\{ \inf \Bigl\{ \langle \theta, G \theta \rangle, 
\; \theta \in L \cap \B{S}_d \Bigr\}, \; L \in \C{L}_i \Bigr\}.
\] 
A proof can for instance be found in \cite[page 62]{Kato2}. 
Based on this formula, we can introduce the estimator 
\[ 
\wh{\sigma}_i = 
\sup \Bigl\{ \inf \Bigl\{ \wt{\C{E}}(\theta), 
\; \theta \in L \cap \B{S}_d \Bigr\}, \; L \in \C{L}_i \Bigr\}.
\] 
It is such that 
\begin{multline*}
F \bigl(\kappa_*, \sigma_i \bigr) = 
F \Bigl( \kappa_*, 
\sup \Bigl\{ \inf \Bigl\{ \langle \theta, G \theta 
\rangle, \; \theta \in L \cap \B{S}_d \Bigr\}, \; L 
\in \C{L}_i \Bigr\} \Bigr) \\ =   
\sup \Bigl\{ \inf \Bigl\{ F \bigl( \kappa_*, \langle \theta, G \theta 
\rangle \bigr), \; \theta \in L \cap \B{S}_d \Bigr\}, \; L 
\in \C{L}_i \Bigr\}
\\ \leq \wh{\sigma}_i \leq 
\sup \Bigl\{ \inf \Bigl\{ \langle \theta, G \theta 
\rangle, \; \theta \in L \cap \B{S}_d \Bigr\}, \; L 
\in \C{L}_i \Bigr\} = \sigma_i,
\end{multline*}
proving that with probability at least $1 - 2 \delta$, 
\[ 
1 - \sqrt{\frac{f(\kappa_*, \sigma_i)}{n}} \leq \frac{\wh{\sigma}_i}{\sigma_i} 
\leq 1, \qquad 1 \leq i \leq d. 
\] 
\end{rmk}
\begin{proof}[Proof of Proposition \vref{prop:6.5}] 
The optimal value of $\alpha$ in the last bound given in Remark 
\vref{rem:3.1} is given by 
\[
\alpha_* = \frac{1}{5} \sqrt{\frac{T}{\B{E} \bigl( \langle \theta, X \rangle^4
\bigr)}} \geq \frac{1}{5} \sqrt{ \frac{\B{E} \bigl( \lVert X \rVert^4 \bigr)}{
\B{E} \bigl( \langle \theta, X \rangle^4 \bigr)}} \geq \frac{1}{5}. 
\]
According to the simplified inequality stated at the end of Remark 
\ref{rem:3.1}, with probability at least $1 - \delta$, for any 
$\theta \in \B{S}_d$, 
\begin{multline*}
\B{E} \bigl( \langle \theta, X \rangle^2 \bigr) 
\leq 
\C{E}(\theta) + 2 \sqrt{\frac{10}{n}} \Bigl[ T \B{E} \bigl( \langle \theta, 
X \rangle^4 \bigr) \Bigr]^{1/4} \frac{\bigl( \sqrt{ \alpha/\alpha_*} + \sqrt{ 
\alpha_* / \alpha}\bigr)}{2}\\
\shoveright{+ 2 \sqrt{ \frac{2}{n} \B{E} \bigl( \langle \theta, X \rangle^4
\bigr) \log(\delta^{-1})
}}\\
= 
\C{E}(\theta) + 2 \sqrt{\frac{10}{n}} \Bigl[ T \B{E} \bigl( \langle \theta, 
X \rangle^4 \bigr) \Bigr]^{1/4} \cosh \biggl( \frac{1}{2} 
\log(\alpha / \alpha_*) \biggr) 
\\ + 2 \sqrt{ \frac{2}{n} \B{E} \bigl( \langle \theta, X \rangle^4
\bigr) \log(\delta^{-1})}.
\end{multline*}
We will take a weighted union bound on all values of $\alpha$ 
belonging to \linebreak $\Bigl\{ \exp(k) / 5 \, : \, k \in \B{N} \Bigr\}$. To perform this, 
we have to modify accordingly the definition of the estimator
and consider the estimator $\wt {\mathcal E}$
defined in the proposition. 
In this change of definition, we have replaced $
\beta$ with $\ds \sqrt{\frac{10 \, T n}{\exp(k)}}$ and $\delta$ with 
$\ds \frac{\delta}{(k+1)(k+2)}$, and we have taken the supremum 
in $k \in \B{N}$ as well as in $\lambda \in \B{R}_+$. 
As 
\[\sum_{k \in \B{N}} \frac{\delta}{(k+1)(k+2)} = \delta
\] 
we get from Proposition \vref{prop3.4} that with probability at least $1 - 
\delta$, for any $\theta \in \B{S}_d$, $\wt{\C{E}} ( \theta ) \leq 
\B{E} \bigl( \langle \theta, X \rangle^2 \bigr)$. 
Recalling that $\ds \alpha = \frac{2Tn}{\beta^2} = \frac{\exp(k)}{5}$, 
we get 
with probability 
at least $1 - \delta$, for any $\theta \in \B{S}_d$, 
\begin{multline*}
\B{E} \bigl( \langle \theta, X \rangle^2 \bigr) \leq \wt{\C{E}} ( \theta )
\\ + \inf_{k \in \B{N}} 2 \sqrt{\frac{10}{n}} \Bigl[ T 
\B{E} \bigl( \langle \theta, X \rangle^4 \bigr) \Bigr]^{1/4} 
\cosh \Biggl( \frac{1}{2} \log \biggl( \frac{\exp(k) }{5 \alpha_*} 
\biggr) \Biggr)
\\ + 2 \sqrt{\frac{2}{n} \B{E} \bigl( \langle \theta, X \rangle^4 
\bigr) \log \bigl[ (k+2)^2 / \delta \bigr]}.
\end{multline*}
(We can take the infimum in $k$ because the inequality holds 
with probability $1 - \delta$ for any value of $k \in \B{N}$). 
We can now choose $k$ to be the closest integer to $\log(5 \alpha_*)$
(that is known to be a non-negative quantity). It is 
such that $\ds \left\lvert \log \biggl( \frac{\exp(k)}{5 \alpha_*} 
\biggr) \right\rvert \leq \frac{1}{2}$ and therefore 
\[ 
k + 2 \leq \log \bigl( 5 \alpha_* \bigr) + \frac{5}{2} = 
\frac{1}{2} \log \biggl( \frac{T}{\B{E} \bigl( \langle 
\theta, X \rangle^4 \bigr)} \biggr) + \frac{5}{2}.
\]  
Remarking that $\sqrt{10} \cosh(1/4) \leq 3.3$ ends the proof. 
\end{proof}

\section{Linear least squares regression}
\label{sec:7}

Consider a couple of random variables $(X, Y) \in \B{R}^{d} \times \B{R}$
whose distribution is assumed to be unknown. Let 
\[ 
(X_1, Y_1), \dots, (X_n, Y_n)
\]  
be an observed sample made of $n$ independent copies of $(X, Y)$. 
In this section, we consider the question of estimating 
\[ 
\inf_{\theta} \B{E} \bigl[ \bigl( \langle \theta, X \rangle - Y 
\bigr)^2 \bigr]. 
\] 
Introduce the Gram matrix 
\[ 
G = \B{E} \bigl( X X^{\top} \bigr) \in \B{R}^{d \times d}, 
\]
the vector 
\[
V = \B{E} \bigl( Y X \bigr) \in \B{R}^d,
\]
and the risk function 
\[
R(\theta) = \bigl\langle \theta, G \theta \bigr\rangle - 2 
\bigl\langle \theta, V \bigr\rangle. 
\]
Remark that 
\[ 
\B{E} \bigl[ \bigl( \langle \theta, X \rangle - Y \bigr)^2 \bigr]
= \B{E} (Y^2) + R(\theta), \qquad \theta \in \B{R}^d, 
\] 
so that minimizing the quadratic loss is equivalent to minimizing $R$. 

We have seen in the previous sections various methods to estimate 
$G$ and $V$. 
As a straightforward consequence, we state a first result, 
concerning the minimization over a bounded domain. 
\begin{prop}
\label{prop:8.1}
Assume that $\wh{G} \in \B{R}^{d \times d}$ and $\wh{V} 
\in \B{R}^d$ are such that 
\begin{equation}
\label{eq:8.3.2}
\lVert \wh{G} - G \rVert_{\infty} \leq \epsilon, \text{ and } 
\lVert \wh{V} - V \rVert \leq \eta.
\end{equation}
Assume also that $\wh{G}$ is a 
symmetric positive semi-definite matrix. 
Let $\Theta$ be a closed bounded set in $\B{R}^d$ and 
let $\ds B = \sup_{\theta \in \Theta} \lVert \theta \rVert$.
Consider the estimated risk 
\[ 
\wh{R}(\theta) = \bigl\langle \theta, \wh{G} \, \theta \bigr\rangle 
- 2 \, \bigl\langle \theta, \wh{V} \bigr\rangle, \qquad \theta \in \B{R}^d,
\] 
and an estimator $\ds \wh{\theta} \in \arg \min_{\Theta} \wh{R}$. 
It is such that
\[
R(\wh{\theta}) - \inf_{\Theta} R \leq 2 B \bigl( \epsilon B + 2 \eta 
\bigr). 
\] 
\end{prop}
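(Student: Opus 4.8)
The plan is to control the uniform discrepancy between the true risk $R$ and the estimated risk $\wh{R}$ over the bounded set $\Theta$, and then to run the classical comparison-of-minimizers argument.

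First I would observe that, for every $\theta \in \Theta$,
\[
\bigl\lvert R(\theta) - \wh{R}(\theta) \bigr\rvert = \bigl\lvert \langle \theta, (G - \wh{G}) \theta \rangle - 2 \langle \theta, V - \wh{V} \rangle \bigr\rvert \leq \lVert \wh{G} - G \rVert_{\infty} \, \lVert \theta \rVert^2 + 2 \, \lVert \theta \rVert \, \lVert \wh{V} - V \rVert,
\]
using the Cauchy--Schwarz inequality together with the definition of the operator norm $\lVert \cdot \rVert_{\infty}$. By hypothesis \eqref{eq:8.3.2} and by the definition of $B = \sup_{\theta \in \Theta} \lVert \theta \rVert$, the right-hand side is at most $\epsilon B^2 + 2 \eta B = B(\epsilon B + 2 \eta)$, whence
\[
\sup_{\theta \in \Theta} \bigl\lvert R(\theta) - \wh{R}(\theta) \bigr\rvert \leq B \bigl( \epsilon B + 2 \eta \bigr).
\]

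Next I would fix a minimizer $\thetas \in \arg \min_{\Theta} R$, which exists since $\Theta$ is compact and $R$ (a quadratic polynomial) is continuous; the same compactness ensures that the estimator $\wh{\theta} \in \arg \min_{\Theta} \wh{R}$ is well defined. Then the chain
\[
R(\wh{\theta}) \leq \wh{R}(\wh{\theta}) + B(\epsilon B + 2 \eta) \leq \wh{R}(\thetas) + B(\epsilon B + 2 \eta) \leq R(\thetas) + 2 B(\epsilon B + 2 \eta)
\]
yields the claim: the outer two inequalities are the uniform bound just established, and the middle one is the definition of $\wh{\theta}$ as a minimizer of $\wh{R}$ on $\Theta$.

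There is essentially no obstacle here; the only point worth a word is the existence of the two minimizers, which follows from compactness of $\Theta$ and continuity of $R$ and $\wh{R}$. Note that the positive semi-definiteness of $\wh{G}$ is not actually needed for this particular statement (it becomes relevant in the subsequent results, where one wants $\wh{R}$ to be bounded below or the minimization to be localized). The argument is simply the textbook bound ``estimation error $\le$ twice the uniform approximation error'' specialized to the quadratic risk.
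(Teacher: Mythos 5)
Your argument is correct and is essentially identical to the paper's own proof: a uniform bound $\sup_{\theta \in \Theta} \lvert R(\theta) - \wh{R}(\theta) \rvert \leq \epsilon B^2 + 2 B \eta$ followed by the standard three-step comparison through $\wh{R}(\wh{\theta}) = \inf_{\Theta} \wh{R}$. Your added remarks on the existence of minimizers and the dispensability of the positive semi-definiteness assumption are accurate but inessential.
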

\begin{proof}
Remark that 
\begin{align*}
R(\wh{\theta})  & \leq \wh{R}(\wh{\theta}) + B^2 \epsilon + 2 B \eta
= \inf_{\theta \in \Theta} \wh{R}(\theta) + B^2 \epsilon + 2 B \eta
\\ & \leq \inf_{\theta \in \Theta} R(\theta) + 2 B^2 \epsilon + 4 B \eta.
\end{align*}
\end{proof}
\begin{cor}
Assume that we know constants $v, T, v', T'$ such that 
\begin{align*}
& \sup_{\theta \in \B{S}_d} \B{E} \bigl( \langle \theta, X \rangle^4 
\bigr) \leq v < \infty,\\  
& \B{E} \bigl( \lVert X \rVert^4 \bigr) \leq T < \infty, \\ 
& \sup_{\theta \in \B{S}_d} \B{E} \bigl( Y^2 \langle \theta, X
\rangle^2 \bigr) \leq v' < \infty, \\ 
& \B{E} \bigl( Y^2 \lVert X \rVert^2  \bigr) \leq T' < \infty,
\end{align*}
Using Propositions \vref{prop:2.3} and \vref{prop:3.2}, we can define 
estimators $\wh{G}$ and $\wh{V}$ such that with probability at least 
$1 - 2 \delta$, 
\begin{align*}
& \lVert \wh{G} - G \rVert_{\infty} \leq \epsilon = 2 \sqrt{ \frac{2 v}{n} 
\bigl( 2 \log(\delta^{-1} + 12 \sqrt{T/v} \bigr)}\\  
\text{ and } \qquad 
& \lVert \wh{V} - V \rVert 
\leq \eta = 2 \Bigl( \sqrt{T'/n} + \sqrt{2 v' \log(\delta^{-1})/n} \Bigr).
\end{align*}
Consequently, the estimator $\wh{\theta}$ of the previous proposition 
based on $\wh{G}$ and $\wh{V}$ 
is such that with probability at least $1 - 2 \delta$, 
\[ 
R(\wh{\theta}) - \inf_{\Theta} R \leq \C{O} \Biggl( \sqrt{
\frac{\log(\delta^{-1})}{n}} \; \Biggr), 
\] 
where the constant hiding behind the notation $\C{O}$ 
depends only on $v, T, v', T'$ and $\ds \sup_{\theta \in \Theta} \lVert 
\theta \rVert$. 
\end{cor}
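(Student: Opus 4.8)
The plan is to construct the two ingredients $\wh{G}$ and $\wh{V}$ required by Proposition~\ref{prop:8.1} by applying the already-proved uncentered bounds to well chosen auxiliary random objects, and then to combine the resulting confidence statements by a union bound.

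\emph{Estimating $V$.} First I would view $V = \B{E}(YX)$ as the mean of the random vector $Z = YX \in \B{R}^d$. Since $\lVert Z \rVert^2 = Y^2 \lVert X \rVert^2$ and $\langle \theta, Z \rangle^2 = Y^2 \langle \theta, X \rangle^2$, the assumed bounds involving $T'$ and $v'$ are precisely the hypotheses $\B{E}(\lVert Z \rVert^2) \le T'$ and $\sup_{\theta \in \B{S}_d} \B{E}(\langle \theta, Z \rangle^2) \le v'$ of Proposition~\ref{prop:2.3}. Applying that proposition with $\C{S} = \B{S}_d$ to the sample $(Y_1 X_1, \dots, Y_n X_n)$ produces, with probability at least $1 - \delta$, an estimator $\wh{V}$ of $V$ with $\lVert \wh{V} - V \rVert \le \eta$.

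\emph{Estimating $G$.} Next I would view $G = \B{E}(X X^{\top})$ as the mean of the random matrix $M = X X^{\top}$, to which Proposition~\ref{prop:3.2} applies once the required second-moment quantities of $M$ are controlled. The elementary identities $M\theta = \langle \theta, X \rangle X$, hence $\langle \xi, M\theta \rangle = \langle \xi, X \rangle \langle \theta, X \rangle$, $\lVert M\theta \rVert^2 = \langle \theta, X \rangle^2 \lVert X \rVert^2$, $\lVert M^{\top}\xi \rVert^2 = \langle \xi, X \rangle^2 \lVert X \rVert^2$ and $\lVert M \rVert_{\HS}^2 = \lVert X \rVert^4$, together with the Cauchy--Schwarz inequality, give $\sup_{\xi,\theta \in \B{S}_d} \B{E}(\langle \xi, M\theta \rangle^2) \le v$, $\sup_{\theta \in \B{S}_d} \B{E}(\lVert M\theta \rVert^2) \le \sqrt{Tv}$ (and the same bound for $M^{\top}$), and $\B{E}(\lVert M \rVert_{\HS}^2) \le T$. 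Feeding the constants $v$, $t = u = \sqrt{Tv}$, $T$ into Proposition~\ref{prop:3.2}, choosing $\beta = \gamma$ as in the remark following it, and bounding $\max\{(t+u)/v, \sqrt{T/v}\} \le 2\sqrt{T/v} + \sqrt{T/v} = 3\sqrt{T/v}$ turns the confidence radius $2B_n$ into exactly $\epsilon$. Because Proposition~\ref{prop:8.1} requires a symmetric positive semi-definite $\wh{G}$, I would select $\wh{G}$ inside the intersection of the confidence region with the cone of symmetric positive semi-definite matrices; on the event of Proposition~\ref{prop:3.2} the true $G$ lies in that intersection, so it is non-empty, and there $\lVert \wh{G} - G \rVert_{\infty} \le \epsilon$.

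\emph{Conclusion.} Intersecting the two events gives probability at least $1 - 2\delta$ on which the pair $(\wh{G}, \wh{V})$ satisfies \eqref{eq:8.3.2} with the stated values of $\epsilon$ and $\eta$ and $\wh{G}$ is symmetric positive semi-definite, so Proposition~\ref{prop:8.1} applied with $B = \sup_{\theta \in \Theta} \lVert \theta \rVert$ yields $R(\wh{\theta}) - \inf_{\Theta} R \le 2B(\epsilon B + 2\eta)$. Since $\epsilon$ and $\eta$ are both of order $\sqrt{\log(\delta^{-1})/n}$, with constants depending only on $(v,T)$ and $(v',T')$ respectively, and $B$ depends only on $\Theta$, the right-hand side is $\C{O}\bigl(\sqrt{\log(\delta^{-1})/n}\bigr)$ with a constant depending only on $v,T,v',T'$ and $\sup_{\theta \in \Theta}\lVert\theta\rVert$. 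I expect the only step that is not purely mechanical to be the symmetric positive semi-definite selection of $\wh{G}$ — legitimate exactly because the true Gram matrix belongs to the confidence region; everything else amounts to pushing the fourth-moment hypotheses on $X$ through Cauchy--Schwarz into the second-moment requirements of Proposition~\ref{prop:3.2} and keeping track of constants.
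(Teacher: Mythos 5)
Your proposal is correct and is exactly the argument the paper intends (the corollary is stated without proof, but its phrasing "Using Propositions \ref{prop:2.3} and \ref{prop:3.2}" prescribes precisely this route): apply Proposition \ref{prop:2.3} to $Z=YX$ to get $\eta$, apply Proposition \ref{prop:3.2} to $M=XX^{\top}$ with the Cauchy--Schwarz reductions $v,\ t=u=\sqrt{Tv},\ T$ to get $2B_n\le\epsilon$ (your bound $\max\{(t+u)/v,\sqrt{T/v}\}\le 3\sqrt{T/v}$ reproduces the factor $12$), restrict the matrix confidence region to symmetric positive semi-definite matrices (non-empty since $G$ belongs to it), and conclude by a union bound and Proposition \ref{prop:8.1}. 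No gaps.
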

\begin{rmk} 
We get only a slow speed of order $n^{-1/2}$ and not $n^{-1}$, but 
we think it is the price to pay to have a dimension-free bound 
under such hypotheses.
\end{rmk}

In the following, we will release the constraint that $\theta$ 
belongs to a bounded domain. We will also propose conditions 
under which a fast rate of order \linebreak $ \C{O} \bigl( \log(\delta^{-1})/n
\bigr)$ is possible. We will be interested first in defining 
some non-asympto\-tic confidence region for $\theta_* \in \arg \min_{
\theta \in \B{R}^d} R(\theta) $. 
We will broaden our analysis to the estimation of the ridge 
regression $\ds \theta_{\lambda} \in \arg \min_{\theta \in \B{R}^d} 
\bigl( R(\theta) + \lambda \lVert \theta \rVert^2 \bigr)$, since this extension 
is quite natural in this context. Indeed, the ridge regression 
problem consists in minimizing $R$ on a ball centered at the origin, 
and ridge regressors, as we will see, will anyhow play a role 
in the definition of a robust estimator. 

\begin{prop}
Make the same assumptions as at the beginning of Proposition 
\vref{prop:8.1} and consider some parameter $\lambda \in \B{R}_+$. 
Introduce the ridge regression loss function
\[ 
R_{\lambda} (\theta) = R(\theta) + \lambda \lVert \theta \rVert^2 = 
\langle \theta, (G + \lambda I) \theta \rangle - 2 \langle \theta, V \rangle 
\] 
and its empirical counterpart
\[ 
\wh{R}_{\lambda}(\theta) = \wh{R}(\theta) + \lambda \lVert \theta \rVert^2
= \langle \theta, ( \wh{G} + \lambda I) \theta \rangle - 2 \langle \theta, 
\wh{V} \rangle.
\]  
Let $\ds \theta_{\lambda} \in \arg \min_{\theta \in \B{R}^d} R_{\lambda}$ and 
$\ds \wh{\theta}_{\lambda} \in \arg \min_{\theta \in \B{R}^d} 
 \wh{R}_{\lambda}(\theta) $. 
Define the confidence region
\[ 
\wh{\Theta}_{\lambda} = \Bigl\{ \theta \in \B{R}^d \, : \, 
\bigl\lVert (\wh{G} + \lambda) \bigl( \theta - \wh{\theta}_{\lambda} \bigr) 
\bigr\rVert \leq  \lVert \theta \rVert \epsilon + \eta \Bigr\}.
\] 
On the event defined 
by equation \myeq{eq:8.3.2}, 
\[ 
\theta_{\lambda} \in \wh{\Theta}_{\lambda}.
\] 
Moreover, for any estimator $\wh{\theta} \not\in \wh{\Theta}_{\lambda}$, 
the improved pick 
\[ 
\wt{\theta}  \in \arg \min_{\theta \in \B{R}^d} \left\{ \, \wh{R}_{\lambda} ( \theta ) 
- \wh{R}_{\lambda} ( \wh{\theta} ) 
+ \epsilon \bigl\lVert \theta - \wh{\theta} \bigr\rVert^2  
+ 2 \lVert \theta - \wh{\theta} \rVert \Bigl( 
\epsilon \lVert \wh{\theta} \bigr\rVert + \eta \Bigr) \; \right\} 
\] 
is such that 
\[ 
R_{\lambda} (\wt{\theta}) < R_{\lambda} (\wh{\theta}),
\] 
and more precisely such that 
\[ 
R_{\lambda}(\wt{\theta}) - R_{\lambda}(\wh{\theta}) \leq \wh{R}_{\lambda}(
\wt{\theta}) - \wh{R}_{\lambda}( \wh{\theta} ) + \bigl\lVert 
\wt{\theta} - \wh{\theta} \bigr\rVert \Bigl( \epsilon \bigl\lVert 
\wt{\theta} + \wh{\theta} \bigr\rVert + 2 \eta \Bigr) < 0. 
\] 
\end{prop}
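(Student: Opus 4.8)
The plan is to carry out everything on the event of \eqref{eq:8.3.2}, using that $R_\lambda$ and $\wh{R}_\lambda$ are convex quadratics (recall $G$ and $\wh{G}$ are positive semi-definite), so that their minimisers are characterised by the normal equations $(G + \lambda I)\theta_\lambda = V$ and $(\wh{G} + \lambda I)\wh{\theta}_\lambda = \wh{V}$. For the first claim I would simply compute
\[
(\wh{G} + \lambda I)\bigl(\theta_\lambda - \wh{\theta}_\lambda\bigr) = (\wh{G} + \lambda I)\theta_\lambda - \wh{V} = (V - \wh{V}) + (\wh{G} - G)\theta_\lambda,
\]
and take norms, so that $\lVert(\wh{G} + \lambda I)(\theta_\lambda - \wh{\theta}_\lambda)\rVert \leq \lVert V - \wh{V}\rVert + \lVert \wh{G} - G\rVert_{\infty}\lVert\theta_\lambda\rVert \leq \eta + \epsilon\lVert\theta_\lambda\rVert$, which is exactly the condition defining membership in $\wh{\Theta}_\lambda$.

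For the second part, the starting point is the comparison of the true and empirical ridge losses: since the $\lambda\lVert\theta\rVert^2$ terms cancel, $R_\lambda(\theta) - \wh{R}_\lambda(\theta) = \langle\theta, (G - \wh{G})\theta\rangle - 2\langle\theta, V - \wh{V}\rangle$. Subtracting the value at $\wh{\theta}$ and telescoping the quadratic part by symmetry of $G - \wh{G}$, namely $\langle\theta, (G - \wh{G})\theta\rangle - \langle\wh{\theta}, (G - \wh{G})\wh{\theta}\rangle = \langle\theta - \wh{\theta}, (G - \wh{G})(\theta + \wh{\theta})\rangle$, I obtain after Cauchy--Schwarz and the bounds $\lVert G - \wh{G}\rVert_{\infty} \leq \epsilon$, $\lVert V - \wh{V}\rVert \leq \eta$ that
\[
R_\lambda(\theta) - R_\lambda(\wh{\theta}) \leq \wh{R}_\lambda(\theta) - \wh{R}_\lambda(\wh{\theta}) + \lVert\theta - \wh{\theta}\rVert\bigl(\epsilon\lVert\theta + \wh{\theta}\rVert + 2\eta\bigr)
\]
for every $\theta \in \B{R}^d$; this is the announced ``more precisely'' inequality taken at $\theta = \wt{\theta}$. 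Using $\lVert\theta + \wh{\theta}\rVert \leq \lVert\theta - \wh{\theta}\rVert + 2\lVert\wh{\theta}\rVert$, the right-hand side is bounded by
\[
g(\theta) := \wh{R}_\lambda(\theta) - \wh{R}_\lambda(\wh{\theta}) + \epsilon\lVert\theta - \wh{\theta}\rVert^2 + 2\lVert\theta - \wh{\theta}\rVert\bigl(\epsilon\lVert\wh{\theta}\rVert + \eta\bigr),
\]
which is precisely the functional minimised to define $\wt{\theta}$; since $g(\wh{\theta}) = 0$ we already get $g(\wt{\theta}) = \min_\theta g \leq 0$, hence $R_\lambda(\wt{\theta}) \leq R_\lambda(\wh{\theta})$. (That the minimum is attained follows from coercivity: $\langle\theta, (\wh{G} + \lambda I)\theta\rangle \geq 0$ gives $\wh{R}_\lambda(\theta) \geq -2\lVert\wh{V}\rVert\lVert\theta\rVert$, dominated at infinity by the $\epsilon\lVert\theta - \wh{\theta}\rVert^2$ term when $\epsilon > 0$.)

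The only genuinely delicate step is upgrading $\leq 0$ to $< 0$, and this is where the hypothesis $\wh{\theta} \notin \wh{\Theta}_\lambda$ must enter. Writing $H = \wh{G} + \lambda I$ and noting that $H\wh{\theta} - \wh{V} = H(\wh{\theta} - \wh{\theta}_\lambda)$, the failure of membership reads exactly $\lVert H\wh{\theta} - \wh{V}\rVert > \epsilon\lVert\wh{\theta}\rVert + \eta \geq 0$. I would then evaluate $g$ along the ray $\theta = \wh{\theta} - sz$, $s > 0$, with $z = (H\wh{\theta} - \wh{V})/\lVert H\wh{\theta} - \wh{V}\rVert$; expanding the quadratic $\wh{R}_\lambda$ gives
\[
g(\wh{\theta} - sz) = s^2\bigl(\langle z, Hz\rangle + \epsilon\bigr) + 2s\bigl(\epsilon\lVert\wh{\theta}\rVert + \eta - \lVert H\wh{\theta} - \wh{V}\rVert\bigr),
\]
whose linear coefficient is strictly negative by the displayed strict inequality, so $g(\wh{\theta} - sz) < 0$ for all small $s > 0$; therefore $g(\wt{\theta}) = \min_\theta g < 0$, and feeding this into the chain of the previous paragraph yields $R_\lambda(\wt{\theta}) - R_\lambda(\wh{\theta}) \leq \wh{R}_\lambda(\wt{\theta}) - \wh{R}_\lambda(\wh{\theta}) + \lVert\wt{\theta} - \wh{\theta}\rVert(\epsilon\lVert\wt{\theta} + \wh{\theta}\rVert + 2\eta) < 0$. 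The main obstacle is nothing conceptual but getting the signs in this last perturbation right and recognising that the descent direction forced by $\wh{\theta} \notin \wh{\Theta}_\lambda$ is exactly $z = (H\wh{\theta} - \wh{V})/\lVert H\wh{\theta} - \wh{V}\rVert$.
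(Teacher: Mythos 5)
Your proposal is correct, and while it proves the same central comparison inequality as the paper (the bound $R_{\lambda}(\xi) - R_{\lambda}(\theta) \leq \wh{R}_{\lambda}(\xi) - \wh{R}_{\lambda}(\theta) + \lVert \xi - \theta \rVert ( \epsilon \lVert \xi + \theta \rVert + 2 \eta )$, obtained by the same telescoping of the quadratic part and Cauchy--Schwarz), it handles the two endpoint claims differently. The paper runs a single variational argument: writing $\gamma_{\lambda}(\theta,\xi)$ for your $g$ with base point $\theta$, it characterizes $\wh{\Theta}_{\lambda}$ as exactly the set of $\theta$ for which $\inf_{\xi} \gamma_{\lambda}(\theta,\xi) = 0 = \gamma_{\lambda}(\theta,\theta)$, via the subdifferential condition $0 \in 2(\wh{G}+\lambda I)\theta - 2\wh{V} + 2\B{B}_d(\epsilon\lVert\theta\rVert + \eta)$; membership of $\theta_{\lambda}$ then follows from the sandwich $0 \geq \inf_{\xi}\gamma_{\lambda}(\theta_{\lambda},\xi) \geq \inf_{\xi} R_{\lambda}(\xi) - R_{\lambda}(\theta_{\lambda}) = 0$, and non-membership of $\wh{\theta}$ immediately forces $\inf_{\xi}\gamma_{\lambda}(\wh{\theta},\xi) < 0$. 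You instead prove $\theta_{\lambda} \in \wh{\Theta}_{\lambda}$ by a direct normal-equation computation ($(\wh{G}+\lambda I)(\theta_{\lambda} - \wh{\theta}_{\lambda}) = (V - \wh{V}) + (\wh{G}-G)\theta_{\lambda}$), and you obtain strict negativity of $\min g$ by exhibiting an explicit descent ray in the direction $z = (H\wh{\theta} - \wh{V})/\lVert H\wh{\theta} - \wh{V}\rVert$ whose first-order coefficient is negative precisely when $\wh{\theta} \notin \wh{\Theta}_{\lambda}$. Both are valid; the paper's version is slicker and exposes $\wh{\Theta}_{\lambda}$ as the fixed-point set of the improvement map, whereas yours avoids subdifferential calculus entirely at the cost of two separate computations. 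The only caveats, shared with the paper, are that attainment/strict convexity implicitly needs $\epsilon > 0$ (or $\wh{G}+\lambda I$ positive definite), and that the normal equations you invoke require the minimizers to exist, which is assumed in the statement.
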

\begin{proof}
Note that for any $\theta, \xi \in \B{R}^d$, 
\begin{multline*}
R_{\lambda}(\xi) - R_{\lambda}(\theta)  
= \langle \theta - \xi, (G + \lambda I) (\theta + \xi) \rangle 
- 2 \langle \theta - \xi, V \rangle \\ \leq 
\wh{R}_{\lambda}(\xi) - \wh{R}_{\lambda}(\theta) + 
\lVert \xi - \theta \rVert \bigl( \epsilon \lVert \xi + \theta \rVert + 2 \eta
\bigr) \\ \leq  
\wh{R}_{\lambda}(\xi) - \wh{R}_{\lambda}(\theta) + 
\epsilon \lVert \xi - \theta \rVert^2 + 
2 \lVert \xi - \theta \rVert \bigl( \epsilon \lVert \theta \rVert 
+ \eta \bigr) \overset{\text{def}}{=} \gamma_{\lambda}(\theta, \xi).
\end{multline*}
As $\xi \mapsto \gamma_{\lambda}(\theta, \xi)$ is strictly convex, 
$\ds \inf_{\xi \in \B{R}^d} \gamma_{\lambda}(\theta, \xi) = 0 = 
\gamma_{\lambda}(\theta, \theta)$ 
if and only if its subdifferential satisfies 
\[ 
0 \in \frac{\partial}{\partial \xi}_{| \xi = \theta} \gamma_{\lambda}(
\theta, \xi) = 2 ( \wh{G} + \lambda I) \theta - 2 \wh{V} + 
2 \B{B}_d \bigl( \epsilon \lVert \theta \rVert + \eta \bigr), 
\]  
where $\B{B}_d$ is the unit ball of $\B{R}^d$. Remarking that 
$\wh{V} = (\wh{G} + \lambda I) \wh{\theta}_{\lambda}$, we see 
that this is equivalent to 
\[ 
\lVert ( \wh{G} + \lambda I) ( \theta - \wh{\theta}_{\lambda}) \rVert 
\leq  \epsilon \lVert \theta \rVert + \eta. 
\] 
To complete the proof, it is enough to remark that, due to its definition, 
\[
0 \geq \inf_{\xi \in \B{R}^d} \gamma_{\lambda}(\theta_{\lambda}, \xi) \geq 
\inf_{\xi \in \B{R}^d} R_{\lambda}(\xi) - R_{\lambda}(\theta_{\lambda}) = 0,
\]
so that $\theta_{\lambda} \in \wh{\Theta}_{\lambda}$. 
\end{proof}
Note that $\wt{\theta} $ 
is the solution of a strictly convex minimization problem. 
It is characterized by the equation  
\[
\bigl( \wh{G} + \lambda I \bigr) \theta  - \wh{V} + \epsilon \bigl( \theta 
- \wh{\theta} \bigr) + \frac{ \theta - \wh{\theta}}{\lVert \theta 
- \wh{\theta} \rVert} \bigl( \epsilon \lVert \wh{\theta} \rVert + \eta 
\bigr) = 0.
\]
In view of the shape of the confidence region, it is natural 
to consider the estimator
\[
\wt{\theta}_{\lambda} \in \arg \min_{\theta \in \wh{\Theta}_{\lambda}} 
\lVert \theta \rVert. 
\] 
\begin{prop}
Let $\xi \in \wh{\Theta}_{\lambda}$ be any parameter value within 
the above defined confidence region. Under the event defined by equation 
\myeq{eq:8.3.2}, it is such that 
\[ 
\bigl\lVert (G + \lambda I ) \bigl( \xi - \theta_{\lambda} \bigr) 
\bigr\rVert \leq 2 \bigl( \epsilon \lVert \xi \rVert + \eta \bigr).
\]  
In particular, since $\theta_{\lambda} \in \wh{\Theta}_{\lambda}$, 
we see from the definition of $\wt{\theta}_{\lambda}$ that 
$\lVert \wt{\theta}_{\lambda} \rVert \leq \lVert \theta_{\lambda} 
\rVert$ and therefore that
\[
\bigl\lVert (G + \lambda I ) \bigl( \wt{\theta}_{\lambda} - \theta_{\lambda} 
\bigr) \bigr\rVert^2 \leq 4 \bigl( \epsilon \lVert \theta_{\lambda} \rVert + 
\eta \bigr)^2. 
\] 
Thus, when $\epsilon = \C{O} \bigl( \sqrt{ \log(\delta^{-1}) / n} \bigr)$ 
and $\eta = \C{O} \bigl( \sqrt{\log(\delta^{-1}) / n} \bigr)$, we get a 
convergence speed
of order $\C{O} \bigl( \log(\delta^{-1}) / n \bigr)$, 
but for a modified definition of the loss function.  
Using a basis $\bigl( e_i, 1 \leq i \leq d \bigr)$ of eigenvectors of 
$G$, with corresponding eigenvalues $\sigma_1 \geq \sigma_2 \geq \cdots 
\geq \sigma_d \geq 0$, 
we see more precisely that for any $\theta \in \B{R}^d$,  
\[ 
R_{\lambda}(\theta) - R_{\lambda}(\theta_{\lambda}) 
= \sum_{i=1}^d \bigl( \sigma_i + \lambda \bigr) 
\langle \theta - \theta_{\lambda}, e_i \rangle^2, 
\] 
whereas
\[
\bigl\lVert (G + \lambda I) \bigl( \theta - \theta_{\lambda} \bigr) \bigr\rVert^2 
= \sum_{i=1}^d (\sigma_i + \lambda)^2 
\langle \theta - \theta_{\lambda}, e_i \rangle^2 = \frac{1}{4} \Bigl\lVert 
\nabla R_{\lambda}(\theta) \Bigr\rVert^2. 
\] 
The relation between the two risks is that 
\begin{align*}
(\sigma_d + \lambda) \bigl[ R_{\lambda}(\theta) - R_{\lambda} ( 
\theta_{\lambda} ) \bigr] & \leq 
\bigl\lVert (G + \lambda I ) ( \theta - \theta_{\lambda} ) 
\bigr\rVert^2 \\ & \leq ( \sigma_1 + \lambda) \bigl[ R_{\lambda}(\theta) - 
R_{\lambda} ( \theta_{\lambda} ) \bigr]. 
\end{align*}
Consequently
\[ 
R_{\lambda}(\wt{\theta}_{\lambda}) - R_{\lambda}(\theta_{\lambda}) 
\leq \frac{4}{\sigma_d + \lambda} \bigl( \epsilon 
\lVert \wt{\theta}_{\lambda} \rVert + \eta \bigr)^2 
\leq \frac{4}{\sigma_d + \lambda} \bigl( \epsilon \lVert \theta_{\lambda}
\rVert + \eta \bigr)^2. 
\] 
\end{prop}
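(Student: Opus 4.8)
The plan is to chain the first-order optimality conditions for $\theta_\lambda$ and $\wh\theta_\lambda$ with the perturbation bounds of \eqref{eq:8.3.2}, and then convert a bound on the gradient-type quantity $\lVert (G + \lambda I)(\,\cdot\, - \theta_\lambda)\rVert$ into a bound on the excess ridge risk via a spectral sandwich.

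First I would record the two stationarity identities $V = (G + \lambda I)\theta_\lambda$ and $\wh V = (\wh G + \lambda I)\wh\theta_\lambda$, obtained by setting the gradients of $R_\lambda$ and $\wh R_\lambda$ to zero (the second one was already used in the preceding proof). For $\xi \in \wh\Theta_\lambda$ I would then write $(G + \lambda I)(\xi - \theta_\lambda) = (G + \lambda I)\xi - V = (\wh G + \lambda I)(\xi - \wh\theta_\lambda) + (G - \wh G)\xi - (V - \wh V)$ and apply the triangle inequality: the defining inequality of $\wh\Theta_\lambda$ bounds the first term by $\epsilon\lVert\xi\rVert + \eta$, while $\lVert (G - \wh G)\xi\rVert \le \lVert G - \wh G\rVert_\infty\lVert\xi\rVert \le \epsilon\lVert\xi\rVert$ and $\lVert V - \wh V\rVert \le \eta$ by \eqref{eq:8.3.2}, giving $\lVert (G + \lambda I)(\xi - \theta_\lambda)\rVert \le 2(\epsilon\lVert\xi\rVert + \eta)$. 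This is the first displayed inequality. Since $\theta_\lambda \in \wh\Theta_\lambda$ by the previous proposition and $\wt\theta_\lambda$ minimizes $\lVert\theta\rVert$ over $\wh\Theta_\lambda$, we get $\lVert\wt\theta_\lambda\rVert \le \lVert\theta_\lambda\rVert$; applying the first inequality to $\xi = \wt\theta_\lambda$ and squaring then yields the second display.

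Next I would establish the spectral identities. Expanding $R_\lambda$ around $\theta_\lambda$ and using $V = (G + \lambda I)\theta_\lambda$, the affine terms combine to give $R_\lambda(\theta) - R_\lambda(\theta_\lambda) = \langle \theta - \theta_\lambda, (G + \lambda I)(\theta - \theta_\lambda)\rangle$; writing $\theta - \theta_\lambda$ in the eigenbasis $(e_i)$ of $G$ produces $\sum_i (\sigma_i + \lambda)\langle\theta - \theta_\lambda, e_i\rangle^2$, and the same decomposition of $\lVert (G + \lambda I)(\theta - \theta_\lambda)\rVert^2$ gives $\sum_i (\sigma_i + \lambda)^2\langle\theta - \theta_\lambda, e_i\rangle^2$, which equals $\tfrac{1}{4}\lVert\nabla R_\lambda(\theta)\rVert^2$ because $\nabla R_\lambda(\theta) = 2(G + \lambda I)(\theta - \theta_\lambda)$. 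Comparing the two sums term by term with $\sigma_d + \lambda \le \sigma_i + \lambda \le \sigma_1 + \lambda$ gives the sandwich $(\sigma_d + \lambda)[R_\lambda(\theta) - R_\lambda(\theta_\lambda)] \le \lVert (G + \lambda I)(\theta - \theta_\lambda)\rVert^2 \le (\sigma_1 + \lambda)[R_\lambda(\theta) - R_\lambda(\theta_\lambda)]$.

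Finally I would specialize the left half of the sandwich to $\theta = \wt\theta_\lambda$ and combine it with the first displayed inequality at $\xi = \wt\theta_\lambda$ (squared), obtaining $R_\lambda(\wt\theta_\lambda) - R_\lambda(\theta_\lambda) \le \frac{4}{\sigma_d + \lambda}(\epsilon\lVert\wt\theta_\lambda\rVert + \eta)^2 \le \frac{4}{\sigma_d + \lambda}(\epsilon\lVert\theta_\lambda\rVert + \eta)^2$, the last step using again $\lVert\wt\theta_\lambda\rVert \le \lVert\theta_\lambda\rVert$ (when $\sigma_d + \lambda = 0$ the bound is read with the convention $r/0 = +\infty$ and is vacuous). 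There is no real obstacle here: the argument is the triangle inequality, the two stationarity identities, and a termwise comparison of quadratic forms. The only point needing a little care is the constant in the first inequality — one has to measure the perturbation of $G$ against $\lVert\xi\rVert$, not against $\lVert\theta_\lambda\rVert$ or $\lVert\wh\theta_\lambda\rVert$, so that the estimate closes with exactly the $\lVert\xi\rVert$ appearing in the definition of $\wh\Theta_\lambda$.
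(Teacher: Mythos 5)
Your proof is correct and follows essentially the same route as the paper: the paper's proof is the single chain $\lVert (G+\lambda I)\xi - V\rVert \leq \lVert (\wh{G}+\lambda I)\xi - \wh{V}\rVert + \epsilon\lVert\xi\rVert + \eta \leq 2(\epsilon\lVert\xi\rVert+\eta)$, which is exactly your decomposition via $\wh{V}=(\wh{G}+\lambda I)\wh{\theta}_{\lambda}$ and the triangle inequality. The remaining spectral identities and the sandwich, which the paper dismisses as straightforward consequences, are worked out correctly in your write-up.
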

\begin{proof}
For any $\xi \in \wh{\Theta}_{\lambda}$, 
\[ 
\lVert (G + \lambda I)(\xi - \theta_{\lambda}) \rVert = \lVert (G+\lambda I) 
\xi - V \rVert \leq \lVert (\wh{G} + \lambda I) \xi - \wh{V} 
\rVert + \epsilon \lVert \xi \rVert + \eta 
\leq 2 \bigl( \epsilon \lVert \xi \rVert + \eta \bigr),
\] 
from which the other statements made in the proposition 
are straightforward consequences.
\end{proof}

From this proposition, we conclude that we have a 
dimension-free bound for \linebreak  $\bigl\lVert (G + \lambda I) 
\bigl( \wt{\theta}_{\lambda} - \theta_{\lambda} \bigr) \bigr\rVert^2$, 
whereas the bound we obtain for $R_{\lambda}(\wt{\theta}_{\lambda}) -  
R_{\lambda}(\theta_{\lambda})$ depends on the dimension 
through $\sigma_d + \lambda$, so that it is dimension-free 
only for large enough values of $\lambda$.

For small values of $\lambda$ depending on $n$, we can obtain
a dimension-free slow rate in the following way. 
Remark that, 
since $\ds \sigma_i \leq \frac{(\sigma_i + \lambda)^2}{4 \lambda}$,  
\begin{multline*}
R_0(\wt{\theta}_{\lambda}) - R_0(\theta_0) = \sum_{i=1}^d \sigma_i 
\langle \wt{\theta}_{\lambda} - \theta_0 , e_i \rangle^2
\leq \sum_{i=1}^d \frac{(\sigma_i + \lambda)^2}{4 \lambda} 
\langle \wt{\theta}_{\lambda} - \theta_0, e_i \rangle^2
\\ = \frac{1}{4 \lambda} \bigl\lVert (G + \lambda I) 
(\wt{\theta}_{\lambda} - \theta_0 ) \bigr\rVert^2.
\end{multline*}
Since $V = G \theta_0 = (G + \lambda I) \theta_{\lambda}$,  
\begin{multline*}
\bigl\lVert (G + \lambda I) ( \wt{\theta}_{\lambda} - \theta_0 ) \bigr\rVert = \bigl\lVert (G + \lambda I) ( \wt{\theta}_{\lambda} - \theta_{\lambda} ) - 
\lambda \theta_0 \bigr\rVert
\\ \leq \bigl\lVert (G + \lambda I) ( \wt{\theta}_{\lambda} - \theta_{\lambda} 
) \bigr\rVert + \lambda \lVert \theta_0 \rVert
\leq 2 \bigl( \epsilon \lVert \theta_{\lambda} \rVert + \eta \bigr) + \lambda 
\lVert \theta_0 \rVert. 
\end{multline*}
Moreover, $\lVert \theta_{\lambda} \rVert \leq \lVert \theta_0 \rVert$, 
indeed, 
\[ 
R_{\lambda}(\theta_{\lambda}) = R_0(\theta_{\lambda}) + \lambda \lVert \theta_{\lambda} \rVert^2
\leq R_0(\theta_0) + \lambda \lVert \theta_0 \rVert^2 \leq 
R_0(\theta_{\lambda}) + \lambda \lVert \theta_0 \rVert^2.  
\] 
Therefore, 
\[ 
\lVert (G + \lambda I)(\wt{\theta}_{\lambda} - \theta_0) \rVert 
\leq 2 \bigl[ (\epsilon + \lambda/2)  \lVert \theta_0 \rVert + \eta \bigr] 
\] 
and coming back to $R_0$, 
\[ 
R_0(\wt{\theta}_{\lambda}) - R_0(\theta_0) \leq 
\frac{1}{\lambda} \bigl[ (\epsilon + \lambda / 2  ) \lVert 
\theta_0 \rVert + \eta \bigr]^2.
\] 
Choose 
$\lambda = 2 (\epsilon + \eta)$ to obtain
\[ 
R_0(\wt{\theta}_{2(\epsilon + \eta)}) - R_0(\theta_0) \leq 
\bigl[ \lVert \theta_0 \rVert + 1/2 \bigr] 
\bigl[ (2 \epsilon + \eta) \lVert \theta_0 \rVert + \eta \bigr].  
\]  
This is a dimension-free bound for $R_0(\wt{\theta}_{\lambda}) - 
R_0(\theta_0)$, 
but it is of order \linebreak $\C{O}\bigl( \sqrt{\log(\delta^{-1})/ n} \bigr)$ instead 
of $\C{O} \bigl( \log(\delta^{-1}) / n \bigr)$.
Notice that it is adaptive in $\lVert \theta_0 \rVert$, though.  

To get faster dimension-free rates for $R_0(\theta)$, we need 
to introduce some restrictions. 

First of all, let us notice that the previous results hold 
uniformly in any linear subspace of $\B{R}^d$. 

\begin{prop}
Let us make the same assumptions as in Proposition \vref{prop:8.1}.
For any linear subspace $L$ of $\B{R}^d$, define
\begin{align*}
\theta_{L,\lambda} & \in \arg \min_{\xi \in L} R_{\lambda}(\xi), \\
\wh{\theta}_{L, \lambda} & \in \arg \min_{\xi \in L} \wh{R}_{\lambda}(\xi).
\end{align*}
Let
\[
\pi_L \theta  = \arg \min_{\xi \in L}  \lVert \xi - \theta \rVert
\] 
be the orthogonal projection on L and let
\begin{align*}
\wh{\Theta}_{L, \lambda} & = \Bigl\{ \xi \in L : \bigl\lVert 
\pi_L (\wh{G} + \lambda I) (\xi - \wh{\theta}_{L,\lambda}) \bigr\rVert 
\leq \epsilon \lVert \xi \rVert + \eta \Bigr\} \\ 
\text{and} \quad \wt{\theta}_{L, \lambda} & \in \arg \min_{\xi \in \wh{\Theta}_{L, \lambda}} 
\lVert \xi \rVert.
\end{align*}
Finally introduce the least eigenvalue of $\pi_L G \pi_L$
\[
\sigma_L  = \inf \Bigl\{ \, \lVert G \xi \rVert \, : \, \xi \in L, 
\, \lVert \xi \rVert 
= 1 \, \Bigr\}. 
\]
Whenever equation \myeq{eq:8.3.2} is satisfied, for any linear subspace $L$ 
of $\B{R}^d$ and any parameter $\lambda \in \B{R}_+$,  
\begin{align*}
\bigl\lVert \pi_{L} (G + \lambda I) ( \wt{\theta}_{L, \lambda} - 
\theta_{L, \lambda} ) \bigr\rVert^2 & \leq 
4 \bigl( \epsilon \lVert \wt{\theta}_{L, \lambda} \rVert + \eta \bigr)^2 
\\ & \leq 
4 \bigl( \epsilon \lVert \theta_{L, 
\lambda} \rVert + \eta \bigr)^2, \\  
\text{and } R_{\lambda}(\wt{\theta}_{L, \lambda}) - R_{\lambda} 
(\theta_{L, \lambda}) & \leq 
\frac{4}{\sigma_L + \lambda} \bigl( \epsilon 
\lVert \wt{\theta}_{L, \lambda} \rVert + \eta \bigr)^2 
\\ & \leq \frac{4}{\sigma_L + \lambda} \bigl( 
\epsilon \lVert \theta_{L, \lambda} \rVert + \eta \bigr)^2. 
\end{align*}
\end{prop}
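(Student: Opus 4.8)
The plan is to observe that, read inside the subspace $L$, the statement is nothing but the previous proposition applied with $\B{R}^d$ replaced by the Euclidean space $L$ itself, so that essentially no new work is needed beyond checking that the hypotheses of Proposition \ref{prop:8.1} are preserved under this reduction.

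Concretely, I would write $\pi_L$ also for the orthogonal projection \emph{matrix} onto $L$ and introduce the restricted data $G_L = \pi_L G \pi_L$, $V_L = \pi_L V$, $\widehat{G}_L = \pi_L \widehat{G} \pi_L$, $\widehat{V}_L = \pi_L \widehat{V}$, viewed as a symmetric operator and vectors on $L$. For $\xi \in L$ one has $R_\lambda(\xi) = \langle \xi, (G_L + \lambda I)\xi\rangle - 2\langle \xi, V_L\rangle$ and the same identity for $\widehat{R}_\lambda$, so that $\theta_{L,\lambda}$, $\widehat{\theta}_{L,\lambda}$, $\widehat{\Theta}_{L,\lambda}$, $\widetilde{\theta}_{L,\lambda}$ are exactly the objects $\theta_\lambda$, $\widehat{\theta}_\lambda$, $\widehat{\Theta}_\lambda$, $\widetilde{\theta}_\lambda$ of the previous proposition, formed inside $L$. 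Here I would use that, because $\xi - \widehat{\theta}_{L,\lambda} \in L$, one has $\pi_L(\widehat{G} + \lambda I)(\xi - \widehat{\theta}_{L,\lambda}) = (\widehat{G}_L + \lambda I)(\xi - \widehat{\theta}_{L,\lambda})$, and that the first-order condition for $\widehat{\theta}_{L,\lambda}$ (resp. $\theta_{L,\lambda}$) reads $(\widehat{G}_L + \lambda I)\widehat{\theta}_{L,\lambda} = \widehat{V}_L$ (resp. $\pi_L[(G + \lambda I)\theta_{L,\lambda} - V] = 0$), matching the characterisations used earlier. The hypotheses of Proposition \ref{prop:8.1} transfer at once: $\widehat{G}_L$ is symmetric positive semi-definite since $\widehat{G}$ is, and since $\pi_L$ is a contraction for both the operator norm and the Euclidean norm, on the event \myeq{eq:8.3.2} we get $\|\widehat{G}_L - G_L\|_\infty = \|\pi_L(\widehat{G} - G)\pi_L\|_\infty \le \epsilon$ and $\|\widehat{V}_L - V_L\| = \|\pi_L(\widehat{V} - V)\| \le \eta$. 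Applying the previous proposition inside $L$ then yields, on that event, $\theta_{L,\lambda} \in \widehat{\Theta}_{L,\lambda}$, hence $\|\widetilde{\theta}_{L,\lambda}\| \le \|\theta_{L,\lambda}\|$ by minimality of the norm of $\widetilde{\theta}_{L,\lambda}$, together with $\|\pi_L(G + \lambda I)(\widetilde{\theta}_{L,\lambda} - \theta_{L,\lambda})\|^2 \le 4(\epsilon\|\widetilde{\theta}_{L,\lambda}\| + \eta)^2 \le 4(\epsilon\|\theta_{L,\lambda}\| + \eta)^2$.

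For the risk bound I would diagonalise $G_L$ in an orthonormal basis $(f_i)$ of $L$ with eigenvalues $\mu_i \ge 0$. Using $\pi_L[(G + \lambda I)\theta_{L,\lambda} - V] = 0$, the same computation as in the previous proposition gives, for $\xi \in L$ and $h = \xi - \theta_{L,\lambda}$, that $R_\lambda(\xi) - R_\lambda(\theta_{L,\lambda}) = \sum_i (\mu_i + \lambda)\langle h, f_i\rangle^2$, while $\|\pi_L(G + \lambda I)h\|^2 = \|(G_L + \lambda I)h\|^2 = \sum_i (\mu_i + \lambda)^2 \langle h, f_i\rangle^2$. Since $\sigma_L = \min_i \mu_i$ is the least eigenvalue of $\pi_L G \pi_L$ on $L$, this gives $(\sigma_L + \lambda)\bigl[R_\lambda(\xi) - R_\lambda(\theta_{L,\lambda})\bigr] \le \|\pi_L(G + \lambda I)(\xi - \theta_{L,\lambda})\|^2$; taking $\xi = \widetilde{\theta}_{L,\lambda}$ and combining with the norm bound above yields the claimed $R_\lambda(\widetilde{\theta}_{L,\lambda}) - R_\lambda(\theta_{L,\lambda}) \le \frac{4}{\sigma_L + \lambda}(\epsilon\|\widetilde{\theta}_{L,\lambda}\| + \eta)^2 \le \frac{4}{\sigma_L + \lambda}(\epsilon\|\theta_{L,\lambda}\| + \eta)^2$. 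The whole argument is bookkeeping; the only mildly delicate step is verifying that the variational characterisations of $\widehat{\theta}_{L,\lambda}$, $\widehat{\Theta}_{L,\lambda}$ and $\widetilde{\theta}_{L,\lambda}$ inside $L$ coincide with the unrestricted ones of the previous proposition, which is precisely what allows one to invoke it verbatim rather than reproduce its proof, and in keeping track of the fact that all the perturbation constants $\epsilon, \eta$ are unchanged because orthogonal projection is a contraction.
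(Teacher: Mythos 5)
Your proof is correct and follows exactly the route the paper intends: the paper states this proposition without a proof, merely observing beforehand that ``the previous results hold uniformly in any linear subspace,'' and your reduction to the previous proposition via $G_L = \pi_L G \pi_L$, $V_L = \pi_L V$ on the Euclidean space $L$ (checking the first-order conditions, the identity $\pi_L(\wh{G}+\lambda I)h = (\wh{G}_L+\lambda I)h$ for $h \in L$, and the contraction property giving $\lVert \wh{G}_L - G_L \rVert_\infty \leq \epsilon$, $\lVert \wh{V}_L - V_L \rVert \leq \eta$) is precisely that argument made explicit. The one point worth flagging is that you correctly read $\sigma_L$ as the least eigenvalue of $\pi_L G \pi_L$ restricted to $L$, i.e.\ $\inf\{\langle \xi, G\xi\rangle : \xi \in L, \lVert \xi \rVert = 1\}$, as the surrounding text says, rather than as the displayed formula $\inf\{\lVert G\xi\rVert : \xi \in L, \lVert \xi \rVert = 1\}$, which can be strictly larger (the projection is missing inside the norm) and for which the risk bound would in general be false.
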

Remark that we can estimate $\sigma_L$ by 
\[ 
\wh{\sigma}_L = \inf \Bigl\{ \; \lVert \wh{G} \xi \rVert \, : \, 
\xi \in L, \; \lVert \xi \rVert = 1 \; \Bigr\}.
\] 
It is such that, for any linear subspace $L$, 
\[ 
\wh{\sigma}_L - \epsilon \leq \sigma_L \leq \wh{\sigma}_L + \epsilon.
\] 

Obtaining a fast convergence rate for the minimization of $R_{\lambda}(\theta)$
when $\lambda$ is small or null and $\sigma_d$ is small 
is possible in a sparse recovery framework. 

\begin{prop}
Consider a family $\C{L}$ of linear subspaces of $\B{R}^d$. 
Assume that $\theta_{\lambda} \in L_* \in \C{L}$ and that $\lVert 
\theta_{\lambda} \rVert \leq A$, a known constant. 

Consider the confidence region
\[ 
\wh{\Theta}_{\lambda} = \Bigl\{ \xi \in \B{R}^d \, : \, \bigl\lVert \bigl( 
\wh{G} + \lambda I \bigr)(\xi - \wh{\theta}_{\lambda}) \bigr\rVert 
\leq \epsilon \lVert \xi \rVert + \eta, \, \lVert \xi \rVert \leq A 
\Bigr\}. 
\] 
Define the model selector
\begin{align*}
\wh{\C{L}} & = \Bigl\{ \; L \in \C{L} \, : \, \wh{\Theta}_{\lambda} 
\cap L \neq \varnothing \; \Bigr\},\\ 
\wh{L} & \in \arg \max \Bigl\{ \; \wh{\sigma}_{L} \, : \, 
L \in \wh{\C{L}} \; \Bigr\},
\end{align*}
and the estimator 
\[ 
\wt{\theta} \in \arg\min \Bigl\{ \; \lVert \xi \rVert \, : \, 
\xi \in \wh{\Theta}_{\lambda} \cap \wh{L} \; \Bigr\}.
\] 
Define 
\[ 
\sigma_* = \inf \Bigl\{ \; \sigma_{L + \B{R} \theta_{\lambda}} \, : \,
L \in \C{L}, \; \sigma_{L} \geq \sigma_{L_*} - 2 \epsilon \; \Bigr\}
\] 
Under the event described by equation \myeq{eq:8.3.2}, 
\[
\bigl( \sigma_* + \lambda \bigr) \lVert \wt{\theta} - \theta_{\lambda} 
\rVert \leq \bigl\lVert (G + \lambda I) (\wt{\theta} - \theta_{\lambda} 
) \bigr\rVert \leq 2 \bigl( \epsilon \lVert \wt{\theta} \rVert + \eta \bigr)
\leq 2 \bigl( \epsilon A + \eta \bigr), 
\]
and 
\[ 
R_{\lambda}(\wt{\theta}) - R_{\lambda}( \theta_{\lambda}) 
\leq \frac{4}{\lambda + \sigma_*} \bigl( \epsilon \lVert \wt{\theta} 
\rVert + \eta \bigr)^2 \leq \frac{4}{\lambda + \sigma_*} \bigl( 
\epsilon A + \eta \bigr)^2.
\] 
\end{prop}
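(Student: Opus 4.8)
The plan is to follow the proofs of the two preceding propositions, inserting a model‑selection step. First, on the event described by \eqref{eq:8.3.2}, I would check that $\theta_{\lambda}$ belongs to $\wh{\Theta}_{\lambda}$: since $V = (G + \lambda I)\theta_{\lambda}$ and $\wh{V} = (\wh{G} + \lambda I)\wh{\theta}_{\lambda}$ are the normal equations, $\lVert (\wh{G} + \lambda I)(\theta_{\lambda} - \wh{\theta}_{\lambda}) \rVert = \lVert (\wh{G} - G)\theta_{\lambda} + V - \wh{V} \rVert \le \epsilon \lVert \theta_{\lambda} \rVert + \eta \le \epsilon A + \eta$, using $\lVert \theta_{\lambda} \rVert \le A$. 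As $\theta_{\lambda} \in L_*$, this gives $\wh{\Theta}_{\lambda} \cap L_* \ne \varnothing$, hence $L_* \in \wh{\C{L}}$, so $\wh{L}$ is well defined; and since $\wh{\Theta}_{\lambda} \cap \wh{L}$ is non‑empty and compact, $\wt{\theta}$ is well defined too.

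The model‑selection step is where the argument really happens. From $\lVert \wh{G} - G \rVert_{\infty} \le \epsilon$ one gets $\lvert \wh{\sigma}_{L} - \sigma_{L} \rvert \le \epsilon$ for every subspace $L$, as already noted. Since $\wh{L}$ maximises $\wh{\sigma}_{L}$ over $\wh{\C{L}} \ni L_*$,
\[
\sigma_{\wh{L}} \ge \wh{\sigma}_{\wh{L}} - \epsilon \ge \wh{\sigma}_{L_*} - \epsilon \ge \sigma_{L_*} - 2\epsilon,
\]
so $\wh{L}$ lies in the index set defining $\sigma_*$ and therefore $\sigma_{\wh{L} + \B{R}\theta_{\lambda}} \ge \sigma_*$. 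I expect this to be the delicate part: one must notice that it is the least eigenvalue of $\pi_{\wh{L} + \B{R}\theta_{\lambda}}\, G\, \pi_{\wh{L} + \B{R}\theta_{\lambda}}$, and not that of $\pi_{\wh{L}}\,G\,\pi_{\wh{L}}$, that controls the estimation error, because $\theta_{\lambda}$ need not belong to $\wh{L}$, and one must be careful with the factor $2\epsilon$ when transferring the comparison from the empirical $\wh{\sigma}$'s to the true $\sigma$'s.

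Next I would establish the two‑sided control of $\lVert (G + \lambda I)(\wt{\theta} - \theta_{\lambda}) \rVert$. For the upper bound, using the two normal equations again,
\[
\lVert (G + \lambda I)(\wt{\theta} - \theta_{\lambda}) \rVert = \lVert (G + \lambda I)\wt{\theta} - V \rVert \le \lVert (\wh{G} + \lambda I)(\wt{\theta} - \wh{\theta}_{\lambda}) \rVert + \lVert (\wh{G} - G)\wt{\theta} \rVert + \lVert \wh{V} - V \rVert,
\]
which is at most $(\epsilon \lVert \wt{\theta} \rVert + \eta) + \epsilon \lVert \wt{\theta} \rVert + \eta = 2(\epsilon \lVert \wt{\theta} \rVert + \eta) \le 2(\epsilon A + \eta)$, the first term being bounded because $\wt{\theta} \in \wh{\Theta}_{\lambda}$ and $\lVert \wt{\theta} \rVert \le A$. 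For the lower bound, observe that $\delta := \wt{\theta} - \theta_{\lambda}$ lies in $\wh{L} + \B{R}\theta_{\lambda}$ (as $\wt{\theta} \in \wh{L}$), so that $\langle \delta, G\delta \rangle \ge \sigma_{\wh{L} + \B{R}\theta_{\lambda}} \lVert \delta \rVert^2 \ge \sigma_* \lVert \delta \rVert^2$, whence $\lVert (G + \lambda I)\delta \rVert \ge \lVert \delta \rVert^{-1} \langle \delta, (G + \lambda I)\delta \rangle \ge (\sigma_* + \lambda)\lVert \delta \rVert$.

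For the risk bound, I would reuse the identity $R_{\lambda}(\wt{\theta}) - R_{\lambda}(\theta_{\lambda}) = \langle \delta, (G + \lambda I)\delta \rangle$ (valid since $(G + \lambda I)\theta_{\lambda} = V$) together with the elementary inequality $\langle \delta, (G + \lambda I)\delta \rangle \le (\sigma_* + \lambda)^{-1}\lVert (G + \lambda I)\delta \rVert^2$, which follows by expanding $\delta$ in an eigenbasis of $G$ from $\langle \delta, G\delta \rangle \ge \sigma_* \lVert \delta \rVert^2$, exactly as in the $L = \B{R}^d$ case treated earlier in the paper. Combining with the upper bound from the previous step yields $R_{\lambda}(\wt{\theta}) - R_{\lambda}(\theta_{\lambda}) \le \frac{4}{\lambda + \sigma_*}(\epsilon \lVert \wt{\theta} \rVert + \eta)^2 \le \frac{4}{\lambda + \sigma_*}(\epsilon A + \eta)^2$, which finishes the proof.
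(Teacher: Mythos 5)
Your proof is correct and follows essentially the same route as the paper's: membership of $\theta_{\lambda}$ in $\wh{\Theta}_{\lambda}$ gives $L_* \in \wh{\C{L}}$, the chain $\sigma_{\wh{L}} \geq \wh{\sigma}_{\wh{L}} - \epsilon \geq \wh{\sigma}_{L_*} - \epsilon \geq \sigma_{L_*} - 2\epsilon$ gives $\sigma_{\wh{L} + \B{R}\theta_{\lambda}} \geq \sigma_*$, and the two-sided control of $\lVert (G+\lambda I)(\wt{\theta}-\theta_{\lambda})\rVert$ yields the risk bound. One small point: the inequality $\langle \delta, (G+\lambda I)\delta\rangle \leq (\sigma_*+\lambda)^{-1}\lVert (G+\lambda I)\delta\rVert^2$ does not really follow by diagonalising $G$ (since $\sigma_*$ is a restricted eigenvalue and $\delta$ need not decompose along eigenvectors whose eigenvalues exceed $\sigma_*$), but it does follow from Cauchy--Schwarz combined with the lower bound $\lVert (G+\lambda I)\delta\rVert \geq (\sigma_*+\lambda)\lVert\delta\rVert$ that you already established, which is how the paper argues.
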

\begin{proof}
Since $\wt{\theta} \in \wh{\Theta}_{\lambda}$, 
\[ 
\lVert (G + \lambda I)(\wt{\theta} - \theta_{\lambda} ) 
\rVert \leq 2 \bigl( \epsilon \lVert \wt{\theta} \rVert + \eta \bigr) 
\leq 2 \bigl( \epsilon A + \eta \bigr).
\] 
On the other hand, 
\[ 
\lVert (G + \lambda I)(\wt{\theta} - \theta_{\lambda} ) \rVert
\geq \lVert \pi_{\wh{L} + \B{R} \theta_{\lambda}} (G 
+ \lambda I)(\wt{\theta} - \theta_{\lambda}) \rVert \geq 
\bigl( \sigma_{\wh{L} + \B{R} \theta_{\lambda}} + \lambda ) 
\lVert \wt{\theta} - \theta_{\lambda} \rVert.
\] 
Moreover, $L_* \in \wh{\C{L}}$, since $\theta_{\lambda} \in 
\wh{\Theta}_{\lambda} \cap L_* \neq \varnothing$. Thus 
\[ 
\sigma_{\wh{L}} \geq \wh{\sigma}_{\wh{L}} - \epsilon \geq 
\wh{\sigma}_{L_*} - \epsilon \geq \sigma_{L_*} - 2 \epsilon, 
\] 
so that $\ds \sigma_{\wh{L} + \B{R} \theta_{\lambda}} \geq \sigma_*$, 
according to the definition of $\sigma_*$, implying that 
\[ 
(\sigma_* + \lambda) \lVert \wt{\theta} - \theta_{\lambda} \rVert 
\leq \lVert (G + \lambda I) ( \wt{\theta} - \theta_{\lambda} ) \rVert 
\leq 2 \bigl( \epsilon \lVert \wt{\theta} \rVert + \eta \bigr), 
\] 
and consequently that 
\[ 
R_{\lambda} ( \wt{\theta} ) - R_{\lambda} ( \theta_{\lambda}) 
\leq \lVert \wt{\theta} - \theta_{\lambda} \rVert \; \lVert 
( G + \lambda I) ( \wt{\theta} - \theta_{\lambda} ) \rVert 
\leq \frac{4}{\sigma_* + \lambda} \bigl( \epsilon \lVert \wt{\theta} 
\rVert + \eta \bigr)^2. 
\]
\end{proof}

Remark that the constant $\sigma_*$ is defined in terms 
of restricted eigenvalues of the Gram matrix, a concept 
that has been used by other authors, for example in \cite{Tsybakov}, 
to set the conditions
of sparse recovery. 

In the case of nested models, 
we can replace the constant $\sigma_*$ with a simpler one, 
as in the following proposition. 

\begin{prop}
Consider a nested family of linear subspaces of $\B{R}^d$
\[
\C{L} = \bigl\{ L_1 \subset L_2 
\subset \cdots \subset L_K \bigr\}.
\]
Assume that $\theta_{\lambda} \in L_* \in \C{L}$, where $L_*$ is unknown, 
and that $\lVert \theta_{\lambda} \rVert \leq A$, where $A$ 
is known. 
Consider the confidence region 
\[ 
\wh{\Theta}_{\lambda} = \Bigl\{ \xi \in \B{R}^d \, : \, 
\bigl\lVert \bigl( \wh{G} + \lambda I \bigr) ( \xi - \wh{\theta}_{\lambda} 
) \bigr\rVert \leq \epsilon \lVert \xi \rVert + \eta, \; \lVert \xi \rVert 
\leq A \Bigr\}. 
\]
Define the model selector
\begin{align*}
\wh{k} & = \arg \min \Bigl\{ \; j \, : \, 
\wh{\Theta}_{\lambda} \cap L_j \neq \varnothing \; \Bigr\}, \\ 
\wh{L} & = L_{\wh{k}},
\end{align*}
and the estimator 
\[ 
\wt{\theta} \in \arg \min \, \Bigl\{ \; \lVert \xi \rVert \, : \, 
\xi \in \wh{\Theta}_{\lambda} \cap \wh{L} \; \Bigr\}. 
\] 
Under the event described by equation \myeq{eq:8.3.2}, 
\[ 
\bigl( \sigma_{L_*} + \lambda \bigr ) \lVert \wt{\theta} 
- \theta_{\lambda} \rVert \leq \bigl\lVert (G + \lambda I) 
( \wt{\theta} - \theta_{\lambda} ) \bigr\rVert \leq 
2 \bigl( \epsilon \lVert \wt{\theta} \rVert + \eta \bigr) 
\leq 2 ( \epsilon A + \eta ),  
\] 
and 
\[ 
R_{\lambda}(\wt{\theta}) - R_{\lambda}(\theta_{\lambda}) 
\leq \frac{4}{ \lambda + \sigma_{L_*}} \bigl( \epsilon \lVert \wt{\theta} 
\rVert + \eta \bigr)^2 \leq \frac{4}{\lambda + \sigma_{L_*}} 
\bigl( \epsilon A + \eta \bigr)^2.  
\] 
\end{prop}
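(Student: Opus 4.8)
The plan is to reproduce, in the nested setting, the argument of the preceding proposition (the one stated for a general family $\C{L}$), the point being that the nesting of $\C{L}$ makes the restricted eigenvalue $\sigma_*$ collapse to the single quantity $\sigma_{L_*}$.

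First I would check that, on the event described by equation \myeq{eq:8.3.2}, one has $\theta_{\lambda} \in \wh{\Theta}_{\lambda}$: indeed $\lVert (\wh{G} + \lambda I)(\theta_{\lambda} - \wh{\theta}_{\lambda}) \rVert \leq \epsilon \lVert \theta_{\lambda} \rVert + \eta$ by the same triangle-inequality computation as in the earlier proposition (using $\wh{V} = (\wh{G} + \lambda I)\wh{\theta}_{\lambda}$ and $V = (G + \lambda I)\theta_{\lambda}$), while $\lVert \theta_{\lambda} \rVert \leq A$ by hypothesis. Since moreover $\theta_{\lambda} \in L_*$, this shows $\wh{\Theta}_{\lambda} \cap L_* \neq \varnothing$; writing $L_* = L_{k_*}$ we get $\wh{k} \leq k_*$, hence by nestedness $\wh{L} = L_{\wh{k}} \subseteq L_{k_*} = L_*$. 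Consequently $\wt{\theta}$, which by construction lies in $\wh{\Theta}_{\lambda} \cap \wh{L}$, satisfies $\wt{\theta} \in L_*$ and $\lVert \wt{\theta} \rVert \leq A$, so $\wt{\theta} - \theta_{\lambda} \in L_*$.

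Next come the two-sided bounds on $\lVert (G + \lambda I)(\wt{\theta} - \theta_{\lambda}) \rVert$. For the upper bound, exactly as before: from $\wt{\theta} \in \wh{\Theta}_{\lambda}$ we have $\lVert (\wh{G} + \lambda I)\wt{\theta} - \wh{V} \rVert \leq \epsilon \lVert \wt{\theta} \rVert + \eta$, and then \myeq{eq:8.3.2} gives $\lVert (G + \lambda I)\wt{\theta} - V \rVert \leq 2(\epsilon \lVert \wt{\theta} \rVert + \eta) \leq 2(\epsilon A + \eta)$, where $V = (G + \lambda I)\theta_{\lambda}$. For the matching lower bound I would use $\wt{\theta} - \theta_{\lambda} \in L_*$: orthogonal projection onto $L_*$ does not increase the norm, and for $\xi \in L_*$ one has $\pi_{L_*}(G + \lambda I)\xi = (\pi_{L_*} G \pi_{L_*} + \lambda I)\xi$, a symmetric positive semi-definite operator on $L_*$ whose smallest eigenvalue is $\sigma_{L_*} + \lambda$; hence $\lVert (G + \lambda I)(\wt{\theta} - \theta_{\lambda}) \rVert \geq (\sigma_{L_*} + \lambda) \lVert \wt{\theta} - \theta_{\lambda} \rVert$, which together with the upper bound yields the first displayed chain of inequalities.

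Finally, for the risk I would use the identity $R_{\lambda}(\wt{\theta}) - R_{\lambda}(\theta_{\lambda}) = \langle \wt{\theta} - \theta_{\lambda}, (G + \lambda I)(\wt{\theta} - \theta_{\lambda}) \rangle$, valid because $V = (G + \lambda I)\theta_{\lambda}$, bound it by Cauchy--Schwarz as $\lVert \wt{\theta} - \theta_{\lambda} \rVert \, \lVert (G + \lambda I)(\wt{\theta} - \theta_{\lambda}) \rVert$, and then insert $\lVert \wt{\theta} - \theta_{\lambda} \rVert \leq (\sigma_{L_*} + \lambda)^{-1} \lVert (G + \lambda I)(\wt{\theta} - \theta_{\lambda}) \rVert$ from the previous step together with $\lVert (G + \lambda I)(\wt{\theta} - \theta_{\lambda}) \rVert \leq 2(\epsilon \lVert \wt{\theta}\rVert + \eta) \leq 2(\epsilon A + \eta)$, to obtain $R_{\lambda}(\wt{\theta}) - R_{\lambda}(\theta_{\lambda}) \leq \frac{4}{\lambda + \sigma_{L_*}}(\epsilon \lVert \wt{\theta} \rVert + \eta)^2 \leq \frac{4}{\lambda + \sigma_{L_*}}(\epsilon A + \eta)^2$. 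I do not expect any real obstacle: the computation is essentially a rerun of the preceding proposition's proof, and the one genuinely new ingredient — the point I would state most carefully — is that the chain $L_1 \subset \cdots \subset L_K$ forces $\wh{L} \subseteq L_*$, which is exactly what lets $\sigma_*$ be replaced by $\sigma_{L_*}$; one must also keep the norm constraint $\lVert \xi \rVert \leq A$ in the definition of $\wh{\Theta}_{\lambda}$, which is harmless since $\lVert \theta_{\lambda} \rVert \leq A$ by assumption.
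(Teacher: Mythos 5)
Your proposal is correct and follows essentially the same route as the paper's own (very terse) proof: membership of $\wt{\theta}$ in $\wh{\Theta}_{\lambda}$ gives the upper bound $2(\epsilon\lVert\wt{\theta}\rVert+\eta)$, nestedness together with $\theta_{\lambda}\in\wh{\Theta}_{\lambda}\cap L_*$ forces $\wh{L}\subseteq L_*$ so that $\wt{\theta}-\theta_{\lambda}\in L_*$ and projection onto $L_*$ yields the factor $\sigma_{L_*}+\lambda$, and the risk bound follows from the quadratic identity plus Cauchy--Schwarz. Your direct triangle-inequality check that $\theta_{\lambda}\in\wh{\Theta}_{\lambda}$ is a harmless shortcut for the subdifferential argument the paper uses in the earlier proposition; no gaps.
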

\begin{proof}
As in the previous proposition, 
\[
\wt{\theta} \in \wh{\Theta}_{\lambda}, 
\]
so that $\lVert (G + \lambda I) ( \wt{\theta} - \theta_{\lambda}) \rVert 
\leq 2 \bigl( \epsilon \lVert \wt{\theta} \rVert + \eta \bigr)$. 
Moreover $L_* \cap \wh{\Theta}_{\lambda} \neq \varnothing$, 
so that $\wh{L} \subset L_*$, implying that 
\[
\bigl( \sigma_* + \lambda \bigr) \lVert \wt{\theta} - \theta_{\lambda} 
\rVert \leq \lVert \pi_{L_*} (G + \lambda I) ( \wt{\theta} 
- \theta_{\lambda} ) \rVert \leq \lVert (G + \lambda I) (\wt{\theta} 
- \theta_{\lambda}) \rVert
\]
and that $\ds R_{\lambda}(\wt{\theta}) - R_{\lambda}(\theta_{\lambda}) 
\leq \frac{4}{\sigma_* + \lambda} \bigl( \epsilon \lVert \wt{\theta} 
\rVert + \eta \bigr)^2$. 
\end{proof}

\bibliographystyle{imsart-nameyear}
\bibliography{Gram}

\end{document}